\newtheorem{Definition}{Definition}
\newtheorem{Theorem}{Theorem}
\newtheorem{Proposition}{Proposition}
\newtheorem{Corollary}{Corollary}
\newtheorem{Remark}{Remark}
\newtheorem{Question}{Question}
\newtheorem{Example}{Example}
\newtheorem{Claim}{Claim}
\newtheorem{Conjecture}{Conjecture}
\def\TT{{\mathcal T}}
\def\Z{{\mathbb Z}}
\def\S{{\mathbb S}}
\def\F{{\mathbb F}}
\begin{document}
\title{$1$-domination of knots\footnote{Results in Sections 2 and 4 of this note were presented in the
conference "Geometry and Topology of 3-manifolds", ICTP, Trieste,
June 20-24, 2005.}}

\date{\today}
\author{M. Boileau, S. Boyer,  D. Rolfsen,  S.C. Wang}

\maketitle

\begin{abstract}
We say that a knot $k_1$ in the $3$-sphere {\it $1$-dominates} another $k_2$ if there is a proper degree 1 map $E(k_1) \to E(k_2)$ between their 
exteriors, and write $k_1 \ge k_2$. When $k_1 \ge k_2$ but $k_1 \ne k_2$ we write $k_1 > k_2$. One expects in the latter eventuality that $k_1$ is 
more {\it complicated}. In this paper we produce various sorts of evidence to support this philosophy. 

\end{abstract}
\tableofcontents
\section {Introduction}\label{introduction}

All knots are assumed tame and contained in the 3-sphere $S^3$ unless otherwise specified.
For basic terminology in knot theory and 3-manifold theory,
see [Rlf], [He] and [Ja].  If $k \subset S^3$ is a knot, then $N(k)$ denotes a closed
regular neighbourhood and $E(k) = \overline{S^3 \setminus N(k)}$ the knot's exterior.
Fixing an orientation of $S^3$ restricts to a preferred orientation of knot exteriors.

We say that a knot $k_1$ in the $3$-sphere {\it $1$-dominates} another knot 
$k_2$, written $k_1\ge k_2$, if there is a degree $1$ map $f:
E(k_1)\to E(k_2)$ which is proper, i.e. $f(\partial E(k_1)) \subset
\partial E(k_2)$. If $k_1\ge k_2$ but $k_1\ne k_2$, we write $k_1
> k_2$, and say that the $1$-domination is {\it non-trivial} or {\it strict}. Let $l_i,
m_i\subset \partial E(k_i)$ be a longitude-meridian system. We
can assume the $f$ defining the $1$-domination has been homotoped so
that $f|: \partial E(k_1)\to
\partial E(k_2)$ is a homeomorphism and $f(l_1)=l_2$.
The proof of Property P for
knots in $\S^3$ by Kronheimer and Mrowka \cite{KM} implies  
that $f(m_1)=m_2$ if both knots are nontrivial.  In the following we
will assume our maps realizing dominations meet these conditions.

Standard arguments show that the relation $\ge$ provides a partial order on knots in $S^3$.  
Indeed, the transitivity and reflexivity of $\geq$ is clear. For antisymmetry, suppose that $k_1\ge k_2$ and $k_2\ge k_1$. 
Then there are degree $1$ maps $f: E(k_1)\to E(k_2)$ and $g: E(k_2)\to E(k_1)$. 
Since degree $1$ maps are surjective on the level of fundamental groups (cf. Proposition \ref{surjective} below) and knot groups are Hopfian, 
$f$ induces an isomorphism $f_*: \pi_1(E(k_1)) \to \pi_1(E(k_2))$. Then since knot exteriors are Haken and $f$ is a proper map, $E(k_1)$ and $E(k_2)$ are
homeomorphic (cf.\cite{He}  15.13). Finally, since knots are determined by their complements
\cite{GL}, $k_1 = k_2$.

\subsection{Algebraic consequences of $1$-domination}

Suppose that $k_1\ge k_2$ and $\sigma$ is a (set of) knot invariant(s). It is generally believed that  $\sigma(k_1)\; ``\ge" \; \sigma(k_2)$ in some sense, and though this has been verified in various cases,
the general case is unknown. See section \ref{open problems} and \cite{Wan} for discussions.

\begin{Proposition}\label{dominateunknot}  Every knot $1$-dominates the unknot.
\end{Proposition}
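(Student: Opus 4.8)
The plan is to exhibit an explicit proper map and then observe that degree one is automatic. Write $V = E(U)$, which is a solid torus $S^1\times D^2$; its meridian disc $D = \{*\}\times D^2$ has boundary the unknot longitude $l_2 = \partial D$, while the core direction $S^1\times\{*\}$ is the unknot meridian $m_2$. A useful preliminary remark is that, for any compact oriented $3$-manifold $M$ with boundary, $H_3(M)=0$, so the connecting homomorphism $\partial\colon H_3(M,\partial M)\to H_2(\partial M)$ is injective and carries the relative fundamental class $[M,\partial M]$ to $[\partial M]$. Applying this to both $E(k)$ and $V$, together with naturality of $\partial$ under a proper map $f\colon E(k)\to V$, shows that if $f|_{\partial E(k)}$ is an orientation-preserving homeomorphism onto $\partial V$ then $f_*[E(k),\partial E(k)]$ is forced to equal $[V,\partial V]$. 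Thus it suffices to produce \emph{any} proper map $f\colon E(k)\to V$ restricting on the boundary to a homeomorphism $T^2\to T^2$ with $m_1\mapsto m_2$ and $l_1\mapsto l_2$; degree $1$ then comes for free.

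To build such an $f$ I would cut along a Seifert surface. Choose a connected oriented Seifert surface $\Sigma$ properly embedded in $E(k)$ with $\partial\Sigma = l_1$, obtained by intersecting a Seifert surface for $k$ with $E(k)$; since $[\Sigma]$ generates $H_2(E(k),\partial E(k))\cong\Z$, the surface $\Sigma$ is nonseparating, so cutting $E(k)$ along $\Sigma$ yields a connected manifold $E_\Sigma$ whose boundary is the closed surface $\Sigma_+\cup A\cup\Sigma_-$, where $\Sigma_\pm$ are the two copies of $\Sigma$ and $A$ is the annulus $\partial E(k)$ cut along $l_1$. On the target side, cutting $V$ along $D$ gives a $3$-ball $B^3\cong D^2\times[0,1]$ whose boundary is $D_+\cup A_V\cup D_-$, where $A_V$ is $\partial V$ cut along $l_2$. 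Since $\Sigma$ is a disc-with-handles with a single boundary circle, there is a pinch map $p\colon(\Sigma,\partial\Sigma)\to(D^2,\partial D^2)$ of degree one that restricts to a homeomorphism $\partial\Sigma\to\partial D^2$ (collapse the handles to a point). Use $p$ on both $\Sigma_+\to D_+$ and $\Sigma_-\to D_-$, and choose a homeomorphism $A\to A_V$ taking the meridional core to the meridional core. Doubling $p$ across the annulus, these pieces fit together into a degree-one map $\partial E_\Sigma\to\partial B^3=S^2$.

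Now extend this boundary map over $E_\Sigma$. Here the key enabling point is that $B^3$ is contractible, so the map $\partial E_\Sigma\to S^2\subset B^3$ extends to a map $\bar f\colon E_\Sigma\to B^3$ with no obstruction whatsoever; by the connecting-homomorphism argument above (now for $(E_\Sigma,\partial E_\Sigma)\to(B^3,S^2)$) this extension automatically has relative degree $1$. Because $\bar f$ agrees with $p$ on the two copies $\Sigma_\pm$ under the identifications $\Sigma_+\cong\Sigma\cong\Sigma_-$ and $D_+\cong D\cong D_-$, the map $\bar f$ descends across the regluings $\Sigma_+\sim\Sigma_-$ and $D_+\sim D_-$ to a continuous proper map $f\colon E(k)\to V$. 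On the boundary torus $A/(l_1^+\sim l_1^-)$ the map $f$ is the homeomorphism induced by $A\to A_V$, sending $m_1\mapsto m_2$ and $l_1\mapsto l_2$, so $f$ has degree $1$ by the preliminary remark. I expect the only genuinely delicate part to be the bookkeeping that the doubled pinch map has degree $+1$ rather than $0$ (matching orientations on $\Sigma_\pm$ and on $D_\pm$) and that $\bar f$ genuinely descends after regluing; everything topological is trivialized by the contractibility of the disc and the ball, which is what makes the statement elementary.
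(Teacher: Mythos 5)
Your proof is correct and takes essentially the same approach as the paper: pinch a Seifert surface of $k$ onto the unknot's spanning disk and send the remainder of $E(k)$ to the complementary $3$-ball, the only cosmetic difference being that where the paper performs a second explicit pinch to map the cut-open exterior onto the ball, you extend via contractibility of $B^3$ and certify degree $1$ homologically from the boundary behaviour. (One terminological slip: the core of the annulus $A$ is parallel to $l_1$, hence longitudinal rather than meridional---the meridian appears as a spanning arc of $A$---but this does not affect the argument.)
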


\noindent We need to show $k \ge O$ for each knot $k$, where $O$ is the unknot.
Note that any compact manifold $M^n$ with spherical collared boundary,
$\partial M \cong S^{n-1}$, can be degree $1$ mapped onto the ball $B^n$, by pinching
the complement of a collar of $\partial M$ to a point.  If $E(k)$ is a knot exterior,
this trick can be used to map a Seifert surface in $E(k)$ to a spanning disk in $E(O)$,
and another pinch to map the remainder of $E(k)$ to the remainder of $E(O)$. \qed

A similar argument shows the following:

\begin{Proposition}\label{sumdominates}
A connected sum $k_1 \sharp k_2$ of knots $1$-dominates each summand.  
Moreover, if $k_1 \ge k_1'$ and  $k_2 \ge k_1' $ then $k_1 \sharp k_2 \ge k_1' \sharp k_2'.$
\end{Proposition}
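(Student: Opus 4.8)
The backbone of both assertions is the standard annular splitting of the exterior of a connected sum. A sphere $S$ realizing the decomposition $k_1\sharp k_2$ meets the knot transversally in two points, so $A:=S\cap E(k_1\sharp k_2)$ is a properly embedded annulus whose core is a meridian, and it splits the exterior as
\[ E(k_1\sharp k_2)=E(k_1)\cup_A E(k_2), \]
where $A$ is identified with an annular neighbourhood of the meridian in each of $\partial E(k_1)$ and $\partial E(k_2)$; on $\pi_1$ this recovers the familiar meridional amalgam $\pi_1 E(k_1)\ast_{\mathbb{Z}}\pi_1 E(k_2)$. Everything below is organised around this splitting, together with the matching splitting $E(k_1'\sharp k_2')=E(k_1')\cup_{A'}E(k_2')$.

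For the first statement I would argue exactly as in Proposition~\ref{dominateunknot}, pinching only one summand. Write $S^3=B_1\cup_S B_2$, so that $k_1\sharp k_2$ meets $B_i$ in an arc $\alpha_i$ and $k_i$ is recovered by capping $\alpha_i$ with a trivial arc. Let $\Phi\colon S^3\to S^3$ be the identity on $B_1$ and, on $B_2$, the analogue of the pinch of Proposition~\ref{dominateunknot} that unties $\alpha_2$ to a trivial arc while fixing $S=\partial B_2$. Then $\Phi$ has degree $1$, satisfies $\Phi(k_1\sharp k_2)=k_1\sharp O=k_1$, and is a homeomorphism near $k_1$; restricting to exteriors yields a proper degree~$1$ map $E(k_1\sharp k_2)\to E(k_1)$, and symmetrically $k_1\sharp k_2\ge k_2$. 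This is also the special case $k_1'=k_1$, $k_2'=O$ of the second assertion below, combined with Proposition~\ref{dominateunknot}.

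For the \emph{moreover} clause (whose hypotheses I read as $k_1\ge k_1'$ and $k_2\ge k_2'$) I would glue the two dominations. Fix proper degree~$1$ maps $f_i\colon E(k_i)\to E(k_i')$ normalised as in the Introduction, so each $f_i|_\partial$ is a homeomorphism with $f_i(m_i)=m_i'$ and $f_i(l_i)=l_i'$; we may assume both summands nontrivial, the remaining cases being immediate. Since $f_1$ and $f_2$ carry the meridian to the meridian, each maps the core of the gluing annulus $A$ to the core of $A'$ with degree $1$; as $A$ and $A'$ are homotopy equivalent to circles, $f_1|_A$ and $f_2|_A$ are homotopic, and after a homotopy of $f_2$ supported in a collar of $A$ I may assume $f_1|_A=f_2|_A$. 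The two maps then glue to
\[ f=f_1\cup f_2\colon E(k_1)\cup_A E(k_2)\longrightarrow E(k_1')\cup_{A'}E(k_2'), \]
that is, $f\colon E(k_1\sharp k_2)\to E(k_1'\sharp k_2')$. Each $f_i$ being proper and boundary-preserving, $f$ is proper and $f|_\partial$ is a homeomorphism of tori; evaluating the degree at a regular value in the interior of the $E(k_1')$-summand, where only $f_1$ contributes, gives $\deg f=\deg f_1=1$.

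The step I expect to be most delicate is precisely this compatibility along $A$: one must arrange $f_1|_A=f_2|_A$ \emph{through proper maps}, so that the collar homotopy alters neither the degree nor the boundary behaviour of $f$. This is where the Introduction's normalisation is essential---the equality $f_i(m_i)=m_i'$, guaranteed for nontrivial knots by Property~P \cite{KM}, is exactly what makes the two meridional annuli correspond. A secondary bookkeeping point is orientations: the compatible choices $f_i(l_i)=l_i'$, together with the fixed orientation of $S^3$, ensure that the glued boundary homeomorphism is orientation-preserving and hence that $f$ has degree $+1$ rather than $-1$.
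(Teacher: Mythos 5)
Your proposal is correct and takes essentially the same route as the paper, which gives no details beyond the remark that ``a similar argument'' to the pinching proof of Proposition \ref{dominateunknot} applies: your splitting $E(k_1\sharp k_2)=E(k_1)\cup_A E(k_2)$ with a Proposition-\ref{dominateunknot}-style pinch on one summand, and the gluing of the two normalized dominations along the meridional annulus for the \emph{moreover} clause, is precisely that intended argument carried out in detail. You also correctly read the statement's hypothesis as the typo-corrected $k_1\ge k_1'$ and $k_2\ge k_2'$, and your attention to the meridian-to-meridian normalization and orientation bookkeeping is exactly where the care is needed.
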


Next we consider some invariants which are known to behave well under $1$-domination.  
Proofs of the stated results will be sketched below.

\begin{Proposition}\label{surjective}
If $f: E(k_1) \to E(k_2)$ is a $1$-domination, then $f_* :\pi_1E(k_1) \to \pi_1E(k_2)$ is surjective.
\end{Proposition}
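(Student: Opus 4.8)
The plan is to argue by contradiction using covering space theory, exploiting the fact that a nontrivial covering of $E(k_2)$ would force the degree of $f$ to be incompatible with $1$. First, set $\pi = \pi_1 E(k_2)$ and let $G = f_*(\pi_1 E(k_1)) \le \pi$ be the image subgroup. Suppose toward a contradiction that $G \ne \pi$, and let $p: \widetilde{X} \to E(k_2)$ be the connected covering corresponding to $G$, so that $p_*(\pi_1 \widetilde{X}) = G$ and the index $d = [\pi : G]$ satisfies $d \ge 2$ (possibly $d = \infty$). Since $f_*(\pi_1 E(k_1)) = G \subseteq p_*(\pi_1 \widetilde{X})$, the lifting criterion provides a map $\widetilde{f}: E(k_1) \to \widetilde{X}$ with $p \circ \widetilde{f} = f$. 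Because $f$ is proper, i.e. $f(\partial E(k_1)) \subseteq \partial E(k_2)$, and $p^{-1}(\partial E(k_2)) = \partial \widetilde{X}$, the lift $\widetilde{f}$ is again a proper map of pairs $(E(k_1), \partial E(k_1)) \to (\widetilde{X}, \partial \widetilde{X})$.

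Next I would track the relative fundamental class $[E(k_1), \partial E(k_1)] \in H_3(E(k_1), \partial E(k_1); \Z)$ through the diagram, separating two cases according to whether $d$ is finite or infinite. If $d < \infty$, then $\widetilde{X}$ is a compact oriented $3$-manifold and $p$ has degree $d$, so multiplicativity of degree gives $1 = \deg f = \deg p \cdot \deg \widetilde{f} = d \cdot \deg \widetilde{f}$ with $\deg \widetilde{f} \in \Z$; since $d \ge 2$ this is impossible. If $d = \infty$, then $\widetilde{X}$ is a connected noncompact $3$-manifold with boundary, whence $H_3(\widetilde{X}, \partial \widetilde{X}; \Z) = 0$; therefore $\widetilde{f}_*[E(k_1), \partial E(k_1)] = 0$, and applying $p_*$ yields $f_*[E(k_1), \partial E(k_1)] = 0$, contradicting the fact that $f$ has degree $1$, which forces $f_*[E(k_1),\partial E(k_1)] = [E(k_2), \partial E(k_2)] \ne 0$. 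Either way we reach a contradiction, so $G = \pi$ and $f_*$ is surjective.

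The routine points---existence of the cover and of the lift---are immediate from standard covering space theory, so the technical care is concentrated in two places. The first is checking that the lifted map $\widetilde{f}$ remains proper, which is needed for the relative fundamental class to be defined and to transform correctly under $\widetilde{f}_*$ and $p_*$; this follows cleanly from $p^{-1}(\partial E(k_2)) = \partial \widetilde{X}$ as noted above. The main obstacle I anticipate is the infinite-index case: one must justify the vanishing of $H_3(\widetilde{X}, \partial \widetilde{X}; \Z)$ for the noncompact cover, which is where noncompactness is genuinely used, and confirm that degree $1$ really does mean $f_*[E(k_1),\partial E(k_1)] = [E(k_2), \partial E(k_2)]$ as an identity in integral relative homology rather than merely mod $2$.
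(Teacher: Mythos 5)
Your proof is correct: the finite-index case is ruled out by multiplicativity of degree under the lift, and the infinite-index case by the vanishing of $H_3(\widetilde{X},\partial\widetilde{X};\Z)$ for a connected noncompact manifold (e.g.\ via Poincar\'e--Lefschetz duality $H_3(\widetilde{X},\partial\widetilde{X};\Z)\cong H^0_c(\widetilde{X};\Z)=0$), and the lift is indeed a map of pairs since $p^{-1}(\partial E(k_2))=\partial\widetilde{X}$. The paper gives no proof at all --- it states that surjectivity ``follows from well-known elementary facts and is left to the reader'' --- and your covering-space lifting argument is exactly the standard argument being alluded to.
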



\begin{Proposition}\label{genus}
If $g(k)$ denotes the genus of $k$, then $k_1 \ge k_2 \implies g(k_1)\ge g(k_2)$. 
\end{Proposition}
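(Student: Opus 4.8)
The plan is to reformulate the genus as a Thurston norm and then prove that this norm is monotone under $1$-domination. For a nontrivial knot $k$, the group $H_2(E(k),\partial E(k);\Z)\cong\Z$ is generated by the class $\alpha_k$ of a Seifert surface, the unique generator with $\partial\alpha_k=[l]$, and a minimal genus Seifert surface realizes the Thurston norm with $\|\alpha_k\|=2g(k)-1$; equivalently $g(k)=(\|\alpha_k\|+1)/2$. (If $k_2$ is trivial then $g(k_2)=0$ and there is nothing to prove, while $k_1$ trivial forces $k_2$ trivial since the abelian group $\pi_1E(k_1)\cong\Z$ cannot surject onto a nonabelian knot group, cf. Proposition \ref{surjective}.) Thus it suffices to show $\|\alpha_{k_1}\|\ge\|\alpha_{k_2}\|$.

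First I would pin down the relation between the two generators. Write $\omega_i\in H^1(E(k_i);\Z)\cong\Z$ for the Poincar\'e dual of $\alpha_{k_i}$, i.e. the generator sending $[m_i]$ to $1$. Since $f$ has degree $1$, $f^*:H^1(E(k_2))\to H^1(E(k_1))$ is injective between copies of $\Z$, and evaluating on the meridian gives $\langle f^*\omega_2,[m_1]\rangle=\langle\omega_2,f_*[m_1]\rangle=\langle\omega_2,[m_2]\rangle=1$, using $f(m_1)=m_2$ from the normalization of $f$. Hence $f^*\omega_2=\omega_1$. Dualizing via the projection formula $f_*\bigl(f^*\omega_2\cap[E(k_1),\partial]\bigr)=\omega_2\cap f_*[E(k_1),\partial]=\omega_2\cap[E(k_2),\partial]$, where the last step uses $\deg f=1$, yields $f_*\alpha_{k_1}=\alpha_{k_2}$.

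The monotonicity itself I would get by pushing surfaces forward, using Gabai's theorem [Ga] that the singular Thurston norm agrees with the (embedded) Thurston norm. Let $j:(S,\partial S)\to(E(k_1),\partial E(k_1))$ be a singular surface with $j_*[S,\partial S]=n\,\alpha_{k_1}$ whose complexity $\chi_-(S)$ is within $\varepsilon$ of $n\|\alpha_{k_1}\|$. Then $f\circ j:(S,\partial S)\to(E(k_2),\partial E(k_2))$ is a singular surface representing $f_*(n\,\alpha_{k_1})=n\,\alpha_{k_2}$ of the same complexity, so $\|\alpha_{k_2}\|\le\chi_-(S)/n$. Letting $\varepsilon\to0$ gives $\|\alpha_{k_2}\|\le\|\alpha_{k_1}\|$, and hence $g(k_2)\le g(k_1)$.

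The one real obstacle is that a naive \emph{pullback} $f^{-1}(F_2)$ of a minimal Seifert surface only bounds the minimum of $\chi_-$ from above, in the wrong direction; the argument instead needs to push a minimizer in $E(k_1)$ forward, and an embedded minimizer cannot be pushed forward while staying embedded. Invoking the equality of the singular and embedded norms is exactly what removes this difficulty, reducing everything to the elementary fact that composing with $f$ preserves both the homology class (via the projection formula) and the Euler characteristic of the surface.
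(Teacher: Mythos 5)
Your proof is correct and takes exactly the route the paper intends: the paper disposes of this proposition in one sentence as ``a corollary of Gabai's result that embedded Thurston Norms and singular Thurston Norms coincide,'' which is precisely your argument of pushing a norm-realizing surface forward by the degree-$1$ map (preserving both $\chi_-$ and, via the projection formula, the homology class) and invoking Gabai in $E(k_2)$ to return to the embedded norm. The extra details you supply --- the identification $f_*\alpha_{k_1}=\alpha_{k_2}$ and the trivial-knot edge cases --- are correct fillings-in of that sketch.
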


\begin{Proposition}\label{volume}
If $V(k)$ denotes the Gromov volume of $E(k)$, then $k_1 \ge k_2 \implies V(k_1)\ge V(k_2)$. 
\end{Proposition}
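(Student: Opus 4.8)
The plan is to deduce the inequality from the basic functoriality of the Gromov (simplicial) norm, together with the fact that $f$ has degree one. Recall that for a compact oriented $3$-manifold $M$ with boundary the Gromov volume is, up to the normalizing constant $v_3$ (the volume of a regular ideal hyperbolic simplex), the simplicial norm $\|[M,\partial M]\|$ of the relative fundamental class $[M,\partial M]\in H_3(M,\partial M;\R)$, where $\|\alpha\|$ is the infimum of $\sum_i |a_i|$ taken over all real singular relative cycles $\sum_i a_i\sigma_i$ representing $\alpha$. Since the same normalizing constant appears on both sides, it suffices to prove the inequality $\|[E(k_1),\partial E(k_1)]\|\ge\|[E(k_2),\partial E(k_2)]\|$ for the relative simplicial norms of the two exteriors.

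First I would record the single property of the simplicial norm that does all the work: it is non-increasing under the maps induced on homology by continuous maps. Indeed, if $g\colon(X,A)\to(Y,B)$ is continuous, then pushing a relative cycle forward does not increase the $\ell^1$-norm of its coefficients, so $\|g_*\alpha\|\le\|\alpha\|$ for every $\alpha\in H_*(X,A;\R)$. Applying this to the map of pairs $f\colon(E(k_1),\partial E(k_1))\to(E(k_2),\partial E(k_2))$ gives $\|f_*[E(k_1),\partial E(k_1)]\|\le\|[E(k_1),\partial E(k_1)]\|$.

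Next I would identify the class $f_*[E(k_1),\partial E(k_1)]$. Because $f$ is proper of degree one and, by the conventions fixed in the introduction, restricts to an orientation-preserving homeomorphism on the boundary tori, it carries the relative fundamental class of $E(k_1)$ to that of $E(k_2)$, that is, $f_*[E(k_1),\partial E(k_1)]=[E(k_2),\partial E(k_2)]$ in $H_3(E(k_2),\partial E(k_2);\R)$. Combining this identification with the previous inequality yields $\|[E(k_2),\partial E(k_2)]\|\le\|[E(k_1),\partial E(k_1)]\|$, which is exactly $V(k_2)\le V(k_1)$.

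The one point that requires care, and which I expect to be the main obstacle, is the relative nature of the argument: knot exteriors have nonempty (torus) boundary, so one must work throughout with the relative simplicial volume and verify carefully that a proper degree-one map sends relative fundamental class to relative fundamental class. This is precisely where properness and the normalization that $f|_{\partial E(k_1)}$ be a homeomorphism enter. Since each boundary component is a torus, its simplicial volume vanishes, so no boundary correction terms intervene and Gromov's monotonicity holds in the clean form above (as in Gromov's work on bounded cohomology and Thurston's notes); the degree-one hypothesis is what removes the multiplicative factor $|\deg f|$ and leaves the inequality, rather than an equality up to that factor.
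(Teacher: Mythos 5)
Your proof is correct, and it is exactly the standard argument behind the citation the paper gives: the paper simply invokes the basic monotonicity of Gromov (simplicial) volume under non-zero degree proper maps, referring to Gromov and Thurston, which is precisely what you prove via functoriality of the relative simplicial norm plus the fact that a proper degree-one map carries $[E(k_1),\partial E(k_1)]$ to $[E(k_2),\partial E(k_2)]$. The only inessential flourish is your remark about the vanishing simplicial volume of tori, which is not needed for monotonicity (it matters for additivity and for comparing relative and hyperbolic volumes), but it does no harm.
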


\begin{Proposition}\label{Apoly}
If $A_{k}$ denotes the $A$-polynomial of $k$, then $k_1 \ge k_2 \implies A_{k_2}\vert A_{k_1}$.
\end{Proposition}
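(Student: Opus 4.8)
The plan is to realize the $A$-polynomial as the defining equation of the eigenvalue variety of the peripheral restriction and to show that passing from $k_1$ to $k_2$ only shrinks this variety. Recall that $A_k(M,L)$ is obtained as follows: one considers the $SL_2(\C)$-representation variety $R(k)=\mathrm{Hom}(\pi_1 E(k),SL_2(\C))$, restricts attention to the closed subvariety $R_\Delta(k)$ of representations $\rho$ whose restriction to the peripheral subgroup $\langle m,l\rangle$ is upper triangular, and records the eigenvalue $M$ of $\rho(m)$ and $L$ of $\rho(l)$. This defines a regular map $\mathrm{ev}_k\colon R_\Delta(k)\to \C^\ast\times\C^\ast$, and the Zariski closure $\overline{\mathrm{ev}_k(R_\Delta(k))}$ is a union of points and curves; $A_k$ is, up to the conventional abelian factor $L-1$, the product of the defining polynomials of its $1$-dimensional components. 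So it suffices to prove the set-theoretic containment $\overline{\mathrm{ev}_{k_2}(R_\Delta(k_2))}\subseteq\overline{\mathrm{ev}_{k_1}(R_\Delta(k_1))}$ and then compare curve components.

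The containment comes from pulling representations back along $f$. Since $f_\ast\colon\pi_1 E(k_1)\to\pi_1 E(k_2)$ is a homomorphism (indeed surjective by Proposition \ref{surjective}), the assignment $\rho\mapsto\rho\circ f_\ast$ is a morphism $f^\ast\colon R(k_2)\to R(k_1)$. The key point is that $f^\ast$ is compatible with the peripheral structure: because we have arranged $f|\colon\partial E(k_1)\to\partial E(k_2)$ to be a homeomorphism with $f_\ast(m_1)=m_2$ and $f_\ast(l_1)=l_2$, a representation $\rho$ of $\pi_1 E(k_2)$ that is upper triangular on $\langle m_2,l_2\rangle$ pulls back to a representation $\rho\circ f_\ast$ of $\pi_1 E(k_1)$ that is upper triangular on $\langle m_1,l_1\rangle$, with $(\rho\circ f_\ast)(m_1)=\rho(m_2)$ and $(\rho\circ f_\ast)(l_1)=\rho(l_2)$. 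Hence $f^\ast$ maps $R_\Delta(k_2)$ into $R_\Delta(k_1)$ and satisfies $\mathrm{ev}_{k_1}\circ f^\ast=\mathrm{ev}_{k_2}$. In particular $\mathrm{ev}_{k_2}(R_\Delta(k_2))\subseteq\mathrm{ev}_{k_1}(R_\Delta(k_1))$, and the desired containment of closures follows.

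Finally I would pass to components. By the standard fact that the restriction to the boundary of each component of the character variety of a knot exterior has image of dimension at most one, both eigenvalue varieties are at most $1$-dimensional, hence finite unions of irreducible curves and points. An irreducible curve contained in $\overline{\mathrm{ev}_{k_1}(R_\Delta(k_1))}$ must coincide with one of its $1$-dimensional components; therefore every $1$-dimensional component of $\overline{\mathrm{ev}_{k_2}(R_\Delta(k_2))}$ is already a component of $\overline{\mathrm{ev}_{k_1}(R_\Delta(k_1))}$. Since $f_\ast$ carries the meridian of $k_1$ to that of $k_2$, abelian representations pull back to abelian representations, so the conventional factor $L-1$ is common to both; collecting the defining polynomials of the shared components yields $A_{k_2}\mid A_{k_1}$.

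I expect the main obstacle to be the bookkeeping at the level of components rather than the geometry: one must make sure the representation-level identity $\mathrm{ev}_{k_2}=\mathrm{ev}_{k_1}\circ f^\ast$ really descends to an inclusion of the \emph{closures} of the images, so that no curve component of the $k_2$-variety is accidentally demoted to a point of the $k_1$-variety, and one must fix a convention for $A_k$ --- radical versus with multiplicities --- under which divisibility is the clean statement. With $A_k$ taken to be the product of the distinct defining polynomials of the $1$-dimensional components (times $L-1$), the set-theoretic containment above gives the divisibility immediately; if multiplicities are retained one argues in addition that the pullback does not decrease them, which again rests on the surjectivity of $f_\ast$.
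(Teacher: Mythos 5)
Your argument is correct and coincides with the proof the paper relies on: the paper does not prove Proposition \ref{Apoly} itself but defers to the sketch in \cite{SWh}, which is precisely this pullback argument --- the peripheral-structure-preserving homomorphism $f_\ast$ (with $f_\ast(m_1)=m_2$, $f_\ast(l_1)=l_2$) induces $\mathrm{ev}_{k_1}\circ f^\ast=\mathrm{ev}_{k_2}$, hence an inclusion of eigenvalue varieties and divisibility of the defining polynomials. The only points requiring care are exactly the ones you flag (the boundary image of each component of the character variety having dimension at most one, and the reduced/$L-1$ conventions for $A_k$), and these are handled the same way in \cite{SWh}.
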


\noindent Let  $\Lambda_{k}$ denote the Alexander module associated with the knot $k$.  That is, consider $\widetilde{E(k)} \to E(k)$ the infinite cyclic cover associated with the (kernel of the) Hurewicz map 
$\pi(X) = \pi_1(E(k)) \to H_1(E(k)) \cong \Z$.  Then  $\Lambda_{k}$ is 
$H_1(\widetilde{E(k)}; \Z)$, considered as a $\Z[t^{\pm 1}]$- module, 
where $t$ corresponds to a generator of the deck transformation group $\Z$.

\begin{Proposition}\label{module}
$k_1 \ge k_2 \implies \Lambda_{k_1}=\Lambda_{k_2}\oplus \Lambda$, in
particular $\Delta_{k_2}\vert\Delta_{k_1}$. 
\end{Proposition}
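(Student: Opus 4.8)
The plan is to reinterpret the Alexander module as homology with coefficients in $\Lambda := \Z[t^{\pm 1}]$, writing $\Lambda_k = H_1(E(k);\Lambda) = H_1(\widetilde{E(k)};\Z)$, and then to exhibit $f_*$ as a \emph{split} surjection of $\Lambda$-modules by constructing a transfer (umkehr) map out of Poincaré--Lefschetz duality. First I would check compatibility of $f$ with the infinite cyclic coverings. Since $f$ restricts to a homeomorphism of boundary tori with $f(m_1)=m_2$ (by the Kronheimer--Mrowka input recalled in the introduction when $k_2$ is nontrivial; the case $k_2=O$ is trivial as $\Lambda_O=0$), the induced map $f_*\colon H_1(E(k_1))=\Z\to H_1(E(k_2))=\Z$ carries the generator $[m_1]$ to the generator $[m_2]$ and is therefore an isomorphism. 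Consequently the composite $\pi_1 E(k_1)\xrightarrow{f_*}\pi_1 E(k_2)\to\Z$ agrees with the abelianization of $\pi_1E(k_1)$, the $\Lambda$-coefficient system on $E(k_2)$ pulls back to that on $E(k_1)$, $f$ lifts to a $\Z$-equivariant proper map $\tilde f\colon\widetilde{E(k_1)}\to\widetilde{E(k_2)}$, and $f$ induces a $\Lambda$-linear map $f_*\colon\Lambda_{k_1}\to\Lambda_{k_2}$.

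Next I would build the section. Because $f$ is a proper degree $1$ map of compact oriented $3$-manifolds carrying $\partial E(k_1)$ homeomorphically onto $\partial E(k_2)$, it is a map of pairs and $f_*[E(k_1),\partial E(k_1)]=[E(k_2),\partial E(k_2)]$. I would define the umkehr map by
\[
f^{!}\;=\;D^{(1)}\circ f^{*}\circ\big(D^{(2)}\big)^{-1}\colon\;
H_1(E(k_2);\Lambda)\longrightarrow H_1(E(k_1);\Lambda),
\]
where $D^{(i)}\colon H^{2}(E(k_i),\partial E(k_i);\Lambda)\xrightarrow{\ \cong\ }H_1(E(k_i);\Lambda)$ is the Lefschetz duality isomorphism given by cap product with $[E(k_i),\partial E(k_i)]$, and $f^{*}$ is induced on relative cohomology by the map of pairs $f$. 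Equivariant Lefschetz duality is available here because knot exteriors are orientable and the coefficient object is a $\Z[\pi_1]$-module; each arrow above is $\Lambda$-linear, so $f^{!}$ is $\Lambda$-linear.

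The key computation is then the projection formula. For $\alpha\in H_1(E(k_2);\Lambda)$, writing $y=(D^{(2)})^{-1}(\alpha)$ so that $y\cap[E(k_2),\partial E(k_2)]=\alpha$, one gets
\[
f_*f^{!}(\alpha)=f_*\big(f^{*}y\cap[E(k_1),\partial E(k_1)]\big)
= y\cap f_*[E(k_1),\partial E(k_1)]
=\deg(f)\,\alpha=\alpha,
\]
using $\deg f=1$. Hence $f^{!}$ is a $\Lambda$-linear section of $f_*$, the map $f_*\colon\Lambda_{k_1}\to\Lambda_{k_2}$ is a split surjection, and $\Lambda_{k_1}\cong\Lambda_{k_2}\oplus\Lambda$ with $\Lambda:=\ker f_*$. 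For the divisibility I would recall that $\Delta_k$ is the order of the torsion $\Lambda$-module $\Lambda_k$; since $\Lambda_{k_1}$ is torsion (standard for knots in $S^3$), its direct summand $\Lambda$ is torsion, and the order is multiplicative on direct sums of finitely generated torsion modules over the Noetherian UFD $\Z[t^{\pm1}]$, so $\Delta_{k_1}=\Delta_{k_2}\cdot\mathrm{ord}(\Lambda)$ up to a unit, giving $\Delta_{k_2}\mid\Delta_{k_1}$.

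I expect the main obstacle to be bookkeeping rather than conceptual: verifying that equivariant Lefschetz duality and the projection formula hold with $\Lambda$-coefficients, and that every map in sight---particularly $f^{*}$ on relative cohomology and the duality isomorphisms $D^{(i)}$---is genuinely $\Lambda$-linear, so that the splitting is as $\Lambda$-modules and not merely as abelian groups. The boundary must also be handled with care (treating $f$ as a map of pairs and tracking the fundamental classes), but the fact that $f$ is a homeomorphism on $\partial E(k_1)$ makes this step routine.
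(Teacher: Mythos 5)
Your proof is correct, but it follows a genuinely different route from the paper's. What you have written is essentially the classical surgery-theoretic argument --- the umkehr map built from equivariant Poincar\'e--Lefschetz duality with local coefficients in $\Lambda=\mathbb{Z}[t^{\pm1}]$ --- which is precisely the ``classical fact'' that the paper cites to Browder \cite{Br} (integral coefficients) and Wall \cite{Wal} (local coefficients) and then deliberately does not reprove. The paper's own proof (Proposition \ref{splitting}) is the advertised concrete proof based on \cite{Mi}: it lifts $f$ to $\tilde f\colon \tilde E(k_1)\to \tilde E(k_2)$ and works with ordinary $\mathbb{Q}$-homology of the infinite cyclic covers, using Milnor's finiteness theorem and his duality $P_i=u_i\cap\,\colon H^1(\tilde E(k_i);\mathbb{Q})\to H_1(\tilde E(k_i);\mathbb{Q})$ to define the section $\alpha(x)=u_1\cap \tilde f^{*}(P_2^{-1}(x))$, verified by the same projection formula you use; it then descends the splitting from $\mathbb{Q}$ to $\mathbb{Z}$ (possible because $H_1(\tilde E(k_i);\mathbb{Z})$ is torsion free) and checks $\tilde f_* t_1=t_2\tilde f_*$ and $\alpha t_2=t_1\alpha$ by hand to promote an abelian-group splitting to a $\mathbb{Z}[t^{\pm1}]$-module splitting, finally reading off divisibility from the cyclic decomposition over the PID $\mathbb{Q}[t^{\pm1}]$. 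Your version buys uniformity: $\Lambda$-linearity is built in, so no descent or equivariance bookkeeping is needed; the cost is the heavier input of duality with local coefficients, and your last step should be phrased with the order of a finitely generated torsion $\Lambda$-module taken as the gcd of its zeroth Fitting ideal, where multiplicativity on direct sums follows from $E_0(M\oplus N)=E_0(M)E_0(N)$ together with Gauss's lemma in the UFD $\mathbb{Z}[t^{\pm1}]$. The paper's version buys elementarity --- only ordinary (co)homology of covers appears --- at the price of the two extra verification steps you avoid.
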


\noindent More generally let $\Delta(k, G) = \{\Delta_{\phi,k}|\,\phi : \pi_1(E(k)) \to G\}$ denote the set
of all {\em twisted} Alexander polynomials  for a given linear group $G$.

\begin{Proposition}\label{twisted}
$k_1 \ge k_2 \implies \Delta(k_2, G)\subseteq \Delta (k_1, G)$. 
\end{Proposition}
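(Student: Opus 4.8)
The plan is to reduce the twisted statement to the untwisted splitting already recorded in Proposition \ref{module}, using the surjection $f_*:\pi_1 E(k_1)\to\pi_1 E(k_2)$ provided by Proposition \ref{surjective} to transport representations backwards. Given any $\phi:\pi_1 E(k_2)\to G$ indexing an element of $\Delta(k_2,G)$, I would set $\phi':=\phi\circ f_*:\pi_1 E(k_1)\to G$, an admissible representation indexing an element of $\Delta(k_1,G)$. Everything then comes down to comparing the twisted invariant of $E(k_1)$ built from $\phi'$ with that of $E(k_2)$ built from $\phi$, for a single degree-one map whose source and target carry coefficient systems that correspond under $f$.

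First I would verify this correspondence of coefficients. Let $\alpha_i:\pi_1 E(k_i)\to\langle t\rangle\cong\Z$ be the abelianizations. The normalization $f(m_1)=m_2$, $f(l_1)=l_2$ imposed in the introduction, together with the fact that the meridian generates $H_1$, forces $\alpha_2\circ f_*=\alpha_1$. Packaging $\phi$ with $\alpha_2$ into $\Phi=\phi\otimes t^{\alpha_2}:\pi_1 E(k_2)\to GL_n(R[t^{\pm1}])$ and $\phi'$ with $\alpha_1$ into $\Phi'=\phi'\otimes t^{\alpha_1}$, this yields $\Phi'=\Phi\circ f_*$. Hence $f$ lifts to a chain map $C_*(E(k_1);\Phi')\to C_*(E(k_2);\Phi)$ of twisted chain complexes over $R[t^{\pm1}]$, which is exactly the twisted analogue of the setup behind Proposition \ref{module}.

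Next I would run the degree-one argument with these local coefficients. Since $f$ is a proper degree-one map of compact oriented $3$-manifolds, restricting to a homeomorphism on the boundary, and since the coefficient system on $E(k_2)$ pulls back to that on $E(k_1)$, Poincar\'e--Lefschetz duality produces an umkehr homomorphism $f_!$ with $f_\# f_!=\mathrm{id}$. Thus $f_\#$ is a \emph{split} surjection on twisted homology, so the twisted module $H_1(E(k_2);\Phi)$ is a direct $R[t^{\pm1}]$-summand of $H_1(E(k_1);\Phi')$, mirroring the decomposition $\Lambda_{k_1}=\Lambda_{k_2}\oplus\Lambda$ of Proposition \ref{module}. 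Passing to orders of these modules, the twisted Alexander polynomial $\Delta_{\phi,k_2}$ divides $\Delta_{\phi',k_1}$; this divisibility, realized through the pulled-back representation $\phi'$, is the content I would record for the inclusion $\Delta(k_2,G)\subseteq\Delta(k_1,G)$.

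The step I expect to be the main obstacle is making the duality argument rigorous with nonabelian twisted coefficients: one must check that the umkehr map genuinely exists and splits $f_\#$ at the level of $R[t^{\pm1}]$-modules---handling the relative-versus-absolute bookkeeping at the peripheral torus, where $f$ is a homeomorphism---and that the twisted Alexander polynomial, defined as an order (equivalently a Reidemeister torsion) that is well defined only up to units of $R[t^{\pm1}]$, is genuinely multiplicative across the direct-sum splitting. A secondary point to pin down is whether the conclusion should be read as literal membership or, more honestly, as the divisibility $\Delta_{\phi,k_2}\mid\Delta_{\phi',k_1}$ furnished by the possibly nontrivial complementary summand, exactly as in the untwisted case.
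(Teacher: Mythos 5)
Your proposal is essentially correct, but it follows a different route from the one the paper relies on. The paper offers no argument at all for this proposition: it is proved by citation to \cite{KMW} (see also \cite{Z}), and the proof there is by Fox calculus --- given a surjection of knot groups, one presents $\pi_1E(k_2)$ by adding relators to a presentation of $\pi_1E(k_1)$, and the divisibility of Wada's invariants is extracted from gcd's of maximal minors of the resulting presentation matrices. You instead extend the paper's own duality proof of Proposition \ref{splitting} (the Milnor-style umkehr $\alpha(x)=u_1\cap \tilde f^*(P_2^{-1}(x))$ with $\tilde f_*\alpha=\mathrm{id}$) to local coefficients. The step you single out as the main obstacle --- existence of an $R[t^{\pm 1}]$-linear umkehr $f_!$ with $f_\# f_!=\mathrm{id}$ for a proper degree-one map with pulled-back twisted coefficients --- is classical, and is in fact exactly what the paper already cites for the local-coefficient case of Proposition \ref{module} (\cite{Wal}, p.\ 25): the projection formula argument goes through verbatim because the relative fundamental class lives in ordinary, untwisted $\Z$-homology, while cap product and $f^*$ are linear over the coefficient ring. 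Your approach buys the stronger module-level conclusion (the twisted Alexander module of $k_2$ is a direct $R[t^{\pm1}]$-summand of that of $k_1$), uniform with the untwisted case; the Fox-calculus route of \cite{KMW} buys direct control of Wada's invariant, including its $H_0$-denominator, from presentations over an arbitrary UFD. On that bookkeeping point, note that with the pulled-back representation the denominators cause no trouble: surjectivity of $f_*$ (Proposition \ref{surjective}) forces the coinvariants $H_0(E(k_1);\Phi')$ and $H_0(E(k_2);\Phi)$ to agree.

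Your closing ``secondary point'' is not secondary: it is the correct reading of the statement. What your argument (and the cited theorem of \cite{KMW}) establishes is the divisibility $\Delta_{\phi,k_2}\mid\Delta_{\phi\circ f_*,k_1}$, not literal membership of $\Delta_{\phi,k_2}$ in the set $\Delta(k_1,G)$. As written, the set containment is strictly stronger and cannot hold in general: already for the trivial representation (or $G$ trivial), a connected sum $k\sharp k$ $1$-dominates $k$ by Proposition \ref{sumdominates}, yet its (twisted) Alexander polynomial is the square of that of $k$, so no representation of $\pi_1E(k\sharp k)$ into $G$ realizes $\Delta_{\phi,k}$ on the nose. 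So the proposition should be understood, as in the source it cites, as a divisibility statement indexed by pulled-back representations --- and that statement your proof does establish, modulo the standard references above.
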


The proof of surjectivity of $f_*$ follows from well-known elementary facts and is left to the reader.
Proposition \ref{genus} is a corollary of Gabai's result that embedded Thurston Norms
and singular Thurston Norms coincide \cite{Ga}.
Proposition \ref{volume} is a basic property of Gromov volume, see \cite{Gr} or
 \cite{Th}. A sketch of proof of Proposition \ref{Apoly} can be found in
\cite{SWh}, and also was discussed in a lecture of Boyer \cite{Boy}.
The existence of splittings provided by a degree $1$ map as in Proposition \ref{module} is a classical fact. See \cite{Br} Theorem 1.2.5 for the $\mathbb
Z$-coefficient case and \cite{Wal} p.25 for the local coefficient case. We will present a rather concrete proof based on \cite{Mi} in
Section \ref{Alexander}. For Proposition \ref{twisted} see \cite{KMW}, and also \cite{Z}.

\subsection{Some open problems} \label{open problems}

The behaviour of bridge numbers $b(k)$ under dominations is largely unknown with only partial results currently available \cite{BNW}.
For crossing number $c(k)$, a positive answer to the question of whether $k_1 > k_2$ implies that $c(k_1) > c(k_2)$ would provide an alternative proof of the fact that any knot $1$-dominates at most finitely many knots
\cite{BRuW}. Relatedly, Kitano asked whether $c(k_1)\ge c(k_2)$ if there is an epimorphism $\pi_1E(k_1)\to \pi_1E(k_2)$, which would provide 
an alternate proof of Simon's conjecture \cite{AL}). It would also support the additivity of crossing number under connected sum.

Some flexibility in the interpretation of ``reduces complexity" notion is necessary. For instance for Jones polynomials it is not true that $k_1 \geq k_2$ implies that $V_{k_1}|V{k_2}$ (see the remark after Example 4 in Section \ref{Alexander}), but it is possible that $k_1 \geq k_2$ implies that the degree of  $V(k_1)$ is $\ge$ that of  $V(k_2)$.

More problems will be raised below.

\subsection{Outline of the paper}\label{outline}

In Section \ref{Rigidity}, we will prove some rigidity results about
$1$-domination between knots; that is, under certain conditions, $k
\ge k'$ implies that $k=k'$. Some previously known conditions
include: 
\begin{enumerate}

\item both $k$ and $k'$ are hyperbolic knots and have the
same Gromov volume (Gromov-Thurston's rigidity theorem \cite{Th});

\item  $k$ and $k'$ have the same genus and $k$ is fibred \cite[Corollary 2.3]{BW1}.

\end{enumerate} 
Theorem \ref{rigidity} states that $k\ge k'$ implies
$k=k'$ if $k$ is a knot with no companion of winding number zero,
and if $k$ and $k'$ have the same genus and the same Gromov
volume.  We also construct a strict $1$-domination $k> k'$ such that both
$k$ and $k'$ have same genus, and same Gromov volumes (and same
Alexander polynomials) to show that {Theorem} \ref{rigidity} is a
best rigidity result in terms of genus and Gromov volume. Other results in a similar spirit can be found  in \cite{BNW} and  \cite{De}.

Section \ref{double-cover} is concerned with relations between domination and double branched coverings $M_2(k)$ of $S^3$ over knots $k$.
We show that if $k \ge k'$,  then
\begin{enumerate}

\item[(1)] $M_2(k_1) \ge M_2(k_2)$ (i.e. there is a degree 1 map $M_2(k_1) \to M_2(k_2)$);  

\item[(2)] If $M_2(k_1) = M_2(k_2)$ then $k_1 = k_2$.

\end{enumerate} 
Assertion $(2)$ can be thought of as an extension of the fact that there is no $1$-domination between distinct 
mutant knots with hyperbolic $2$-fold branched coverings \cite{Ru}. We use it to show that knots $1$-dominated by 2-bridge knots, respectively Montesinos knots, are 2-bridge, respectively
Montesinos. (See \cite{ORS}, \cite{Li}, \cite{BB}, \cite{BBRW} for other results on $1$-domination between 2-bridge knots and Montesinos knots). We also show in section \ref{sec:AP} that any knot 
$1$-dominated by a toroidally alternating knot is a connected sum of simple knots. Assertion (1) suggests some interesting questions about the relations between $1$-domination among knots, 
the theory of left orderable groups, and Heegaard-Floer L-spaces. See  \cite{BRoW}, \cite{BGW},  \cite{OS}.

In Section \ref{$1$-domination sequences} we study upper bounds on the length $n$ of
$1$-domination sequences of knots $k_0> k_1>k_2>....>k_n$ with given
$k_0$, which is closely related to rigidity results. It is known
that any sequence of $1$-dominations $M_0>M_1>...>M_i>....$ of compact orientable
3-manifolds has a finite length
\cite{Ro1}, and there is an apriori bound on this length given $M_0$
\cite{So2}. {Theorem} \ref{$1$-domination length in genus} states that if
a knot $k_0$ is free (see Section \ref{$1$-domination sequences} for definitions), then the length of any $1$-domination
sequence of knots $k_0> k_1>k_2>....>k_n$ is bounded by the maximal genus $\hat
g(k_0)$ of an incompressible Seifert surface for $k_0$ when $\hat g(k_0)$ is bounded   . We point out that alternating knots, fibred knots and small knots are
free with bounded $\hat g(k_0)$. If $k_0$ is either fibred or 
2-bridge, then $\hat g(k_0)$ is equal to the genus $g(k_0)$ of $k_0$.  
One-dominations between  small knots, fibred knots, and two bridge knots have also been addressed in \cite{BW2}, \cite{ORS},
 \cite{BB}, \cite{BBRW}.

In Section \ref{Alexander} we present a proof of $\Lambda_{k_1}=\Lambda_{k_2}\oplus \Lambda$ when $k_1 \ge k_2$ 
along with some applications. We also point out that Gordon's approach to 
ribbon concordance \cite{Go},  based on Stallings' results about homology and central
series of groups \cite{Sta}, provides some other rigidity results for
$1$-domination of knots in terms of Alexander polynomials. Consequently, 
the length of a $1$-domination sequence $k_0> k_1
> k_2 > ...> k_n$ of alternating knots is bounded above by the degree of $\Delta_{k_0}$ when its leading coefficient is a
prime power.

It is known that any knot $1$-dominates at most finitely many knots \cite{BRuW}. (See also the stronger results of \cite{AL}, \cite{Liu}.) 
It is very hard to bound the number of knots $1$-dominated by a given knot in general. However the techniques of this paper provide many 
knots which are minimal in the sense that they only $1$-dominate the trivial knot and themselves.

\section{Rigidity via genus and Gromov volume}\label{Rigidity}

\subsection{Satellite knots and an example}\label{satellite}

We recall the definition of satellite knots and fix some notation and terminology needed below.

Suppose that $k_p$ is a knot contained in a solid torus $V$, where
$V \subset S^3$ is unknotted and has longitude and meridian $l,m$.
It is assumed that $k_p$ does not lie in a $3$-ball in $V$. Let
$k_c$ be another knot in $S^3$, with regular neighbourhood
$N(k_c)$, and let $h: V \to N(k_c)$ be a homeomorphism, taking $l$
and $m$ respectively to the longitude and meridian of $k_c$.  
Then the knot $k_s := h(k_p)$ is called the {\it
satellite} of $k_c$ with {\it pattern} knot $k_p$, the latter
considered in $S^3$. One also calls $k_c$ a {\it companion} of
$k_s$.

\begin{Proposition}\label{satellitesdominate}
Satellite knots $1$-dominate their pattern knots.
\end{Proposition}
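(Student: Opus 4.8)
The plan is to realize both exteriors as unions along a common piece, namely the exterior $E_V(k_p) := \overline{V \setminus N(k_p)}$ of the pattern inside the solid torus $V$, and then to invoke Proposition \ref{dominateunknot} on the complementary piece. Since $V \subset S^3$ is unknotted, write $V' := \overline{S^3 \setminus V}$, a solid torus which is exactly the exterior $E(O)$ of the unknotted core $O$ of $V$. Then $E(k_p) = E_V(k_p) \cup_{T} V'$, where $T = \partial V$ and the two pieces are glued by the identity on $T$. On the other hand, writing $N(k_s) = h(N(k_p))$ we have $E(k_s) = h(E_V(k_p)) \cup_{h(T)} E(k_c)$, glued along $h(T) = \partial N(k_c) = \partial E(k_c)$. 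Thus the two exteriors share the block $E_V(k_p)$ (up to $h$) and differ only in that the unknot exterior $V'$ has been replaced by the companion exterior $E(k_c)$.

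Next I would build the map $f : E(k_s) \to E(k_p)$ piecewise. On $h(E_V(k_p))$ set $f = h^{-1}$, an orientation-preserving homeomorphism onto $E_V(k_p) \subset E(k_p)$ (orient everything compatibly with a fixed orientation of $S^3$, so that $h$ is orientation-preserving). On $E(k_c)$ set $f = d$, where $d : E(k_c) \to E(O) = V'$ is the proper degree $1$ map furnished by Proposition \ref{dominateunknot}. The two definitions must agree on the gluing torus $h(T)$, and here I would use the meridian--longitude bookkeeping: since $h(l) = \lambda_c$ and $h(m) = \mu_c$, where $\lambda_c, \mu_c$ are the longitude and meridian of $k_c$, while $d$ carries $\lambda_c$ and $\mu_c$ to the longitude $l$ and meridian $m$ of $O$, both $h^{-1}$ and $d$ induce the same isomorphism on $H_1(h(T))$. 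Hence $d|_{\partial E(k_c)}$ and $h^{-1}|_{h(T)}$ are isotopic homeomorphisms onto $T$, and after a preliminary isotopy of $d$ near $\partial E(k_c)$ we may assume they coincide. The two pieces then glue to a continuous proper map $f$, since each sends boundary to boundary and $\partial E(k_s) = h(\partial N(k_p)) \subset h(E_V(k_p))$ is mapped by $h^{-1}$ homeomorphically onto $\partial E(k_p)$.

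Finally I would compute the degree by choosing a regular value $y$ in the interior of $E_V(k_p) \subset E(k_p)$, away from $V'$: its only preimage is $h(y)$, at which $f = h^{-1}$ has local degree $+1$, while $d$ maps $E(k_c)$ into $V'$ and so contributes no preimages of $y$. Thus $\deg f = 1$ and $k_s \ge k_p$. The normalization that $f$ restrict to a homeomorphism of boundaries carrying longitude to longitude may then be arranged by the homotopy discussed in the Introduction.

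I expect the main things to get right to be the orientation bookkeeping (ensuring $h$ and $d$ are both of degree $+1$, so that the glued map is $+1$ rather than $-1$) and the verification that the boundary identifications of $d$ and $h^{-1}$ genuinely match on $T$; reconciling the gluing of $V'$ in $E(k_p)$ with the gluing of $E(k_c)$ in $E(k_s)$ through the meridian--longitude correspondence is the delicate point, but once it is set up correctly the degree computation is immediate.
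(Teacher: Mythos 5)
Your proposal is correct and follows essentially the same route as the paper: decompose $E(k_s)$ into $h(E_V(k_p))$ and $E(k_c)$, use $h^{-1}$ on the first piece and the pinch map of Proposition \ref{dominateunknot} (from $E(k_c)$ onto the complementary solid torus $V'$, i.e.\ the unknot exterior) on the second. The paper states this in three lines; your version simply makes explicit the boundary-gluing and degree bookkeeping that the paper leaves to the reader.
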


\begin{proof}
Suppose that $k_s$ is a satellite of $k_c$ with
pattern $k_p$ as described above.  Arguing as in Proposition \ref{dominateunknot} there is a degree $1$ map of the exterior of $k_c$ to the
exterior of $V$.
Combining this with $h^{-1}$ on the closure of $N(k_c) \setminus
N(k_s)$ gives the 1-domination $k_s \geq k_p$. 
\end{proof}

\begin{Example}\label{same volume}
{\rm We construct a non-trivial $1$-domination $k > k_1$ of knots with
the same genus, the same Alexander polynomial, and the same Gromov
volume. Moreover all those invariants are non-vanishing.

Let $k=h(k_1)$ be the satellite of the trefoil $k_2$ indicated by
Figure 2. Here $h: V \to N(K_2)$ is a homeomorphism preserving the
longitudes pictured; $k$ itself is not drawn.  Then we have a
$1$-domination $k\ge k_1$. Let $\TT$
and $\TT_1$ be the JSJ-tori of $E(k)$ and $E(k_1)$ respectively,
then $E(k)\setminus \TT$ consists of three components: two Seifert
pieces and one hyperbolic piece $H$, which is homeomorphic to the
Whitehead link complement; and $E(k_1)\setminus \TT_1$ consists of
two components: one Seifert piece and one hyperbolic piece $H$.
Thus $k > k_1$.  On the other hand, it is clear that both $k$ and $k_1$
are of genus 1, and have the same Gromov volume, which equals the
hyperbolic volume of $H$. They also have the same Alexander
polynomials, since $h$ is longitude preserving (see \cite{Rlf}, Chap.7)
and $k_1$ is an untwisted double.}
\end{Example}

\begin{center}
\includegraphics[totalheight=5cm]{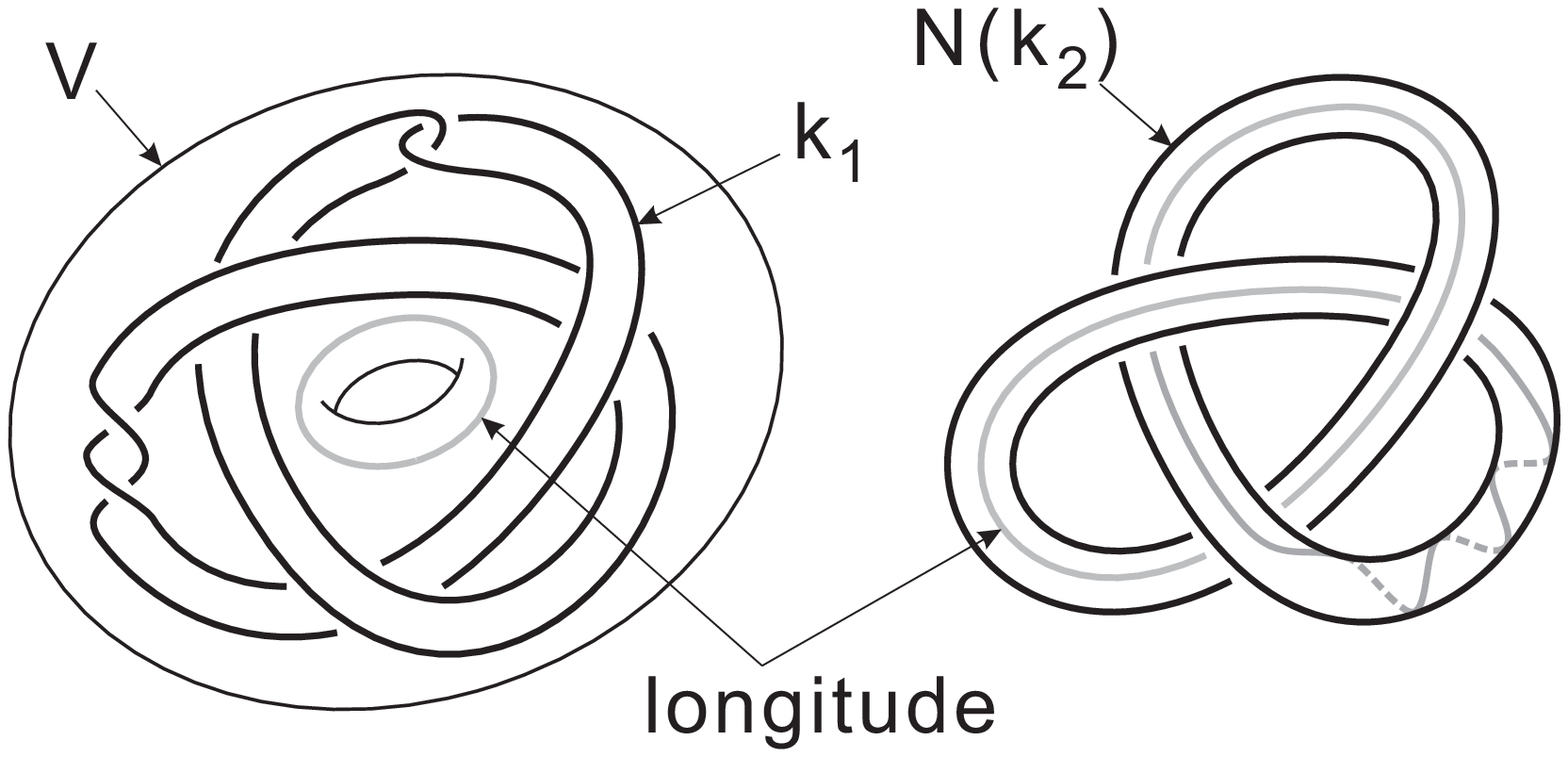}
\begin{center}
Figure 2
\end{center}
\end{center}

\bigskip

\begin{Remark}
{\rm By iterating the construction in Example \ref{same volume}, one
can provide an arbitrarily long $1$-domination sequence of knots
with the same genus, the same Alexander polynomial and the same
Gromov volume.}
\end{Remark}

Suppose $k$ is a knot and $T$ is an essential torus
in $E(k)$. By a theorem of Alexander, $T$ bounds a solid torus $V \cong S^1 \times D^2$, and as 
$T$ is incompressible in $E(k)$, we must have $k \subset V$. Thus $k$ represents some
multiple of the generator of $\pi_1(V) \cong \Z$.  We call the absolute value of this
multiple the {\it winding number} of $T$ relative to $k$. In this setting,
the core curve of $V$ is a companion of $k$.

The essential feature permitting the construction of satellites with the same genus, Alexander polynomial and Gromov volume 
is that the winding number of $k$ in $N(k_2)$ is zero. This turns out to be necessary to the construction, as the following theorem demonstrates.

\begin{Theorem}[Rigidity] \label{rigidity}  
Suppose that $k$ is a non-trivial knot
such that every essential torus
in $E(k)$ has non-zero winding number.
If $k$ and $k'$ have the same Gromov volume and the same genus, and $k\ge k'$, then $k = k'$.
\end{Theorem}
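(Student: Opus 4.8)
The plan is to upgrade the degree $1$ map $f\colon E(k)\to E(k')$ realizing $k\ge k'$ to a homeomorphism up to homotopy, and then conclude exactly as in the antisymmetry argument of the Introduction: a homeomorphism $E(k)\to E(k')$ carrying the meridian $m_1$ to $m_2$ extends over the meridian disks to a self-homeomorphism of $S^3$ taking $k$ to $k'$, whence $k=k'$ by Gordon--Luecke \cite{GL}. By Waldhausen's theorem for Haken manifolds (\cite{He}, 15.13) together with the boundary normalization $f(l_1)=l_2$, $f(m_1)=m_2$ fixed in the Introduction, it suffices to show that $f_*\colon\pi_1E(k)\to\pi_1E(k')$ is \emph{injective}; it is already surjective by Proposition \ref{surjective}, so this will make it an isomorphism. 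I first note that $k'$ is non-trivial, since $g(k')=g(k)\ge 1$ because $k$ is knotted.

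Next I would put $f$ in standard position with respect to the JSJ decompositions $\TT\subset E(k)$ and $\TT'\subset E(k')$: homotope $f$ to be transverse to $\TT'$ and remove inessential intersections, so that $f^{-1}(\TT')$ is a family of essential tori and $f$ induces a surjection of JSJ graphs together with proper degree $1$ maps between corresponding pieces. The Gromov volume hypothesis controls the hyperbolic pieces. Since Gromov volume is carried entirely by the hyperbolic JSJ pieces and is additive over the decomposition (cf.\ \cite{Gr}, \cite{Th}), the inequality of Proposition \ref{volume} refines to the statement that the equality $V(k)=V(k')$ leaves no room for volume to be lost: the hyperbolic pieces of $E(k)$ and $E(k')$ are then in bijection, each corresponding restriction of $f$ being a proper degree $1$ map between finite-volume hyperbolic pieces of equal volume, hence homotopic to a homeomorphism by the Gromov--Thurston rigidity theorem \cite{Th}. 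This is mechanism $(1)$ of the Introduction applied piecewise.

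It remains to rule out the pinching of Seifert pieces, and this is the step I expect to be the crux, using the genus and winding-number hypotheses in tandem. The phenomenon to exclude is exactly Example \ref{same volume}, where a companion Seifert piece is collapsed while the genus, volume, and Alexander polynomial all survive precisely because the winding number there is zero. Under our hypothesis no such collapse can be invisible to the genus: an essential torus $T\subset E(k)$ bounds a solid torus $V\supset k$ whose core is a nontrivial companion $k_c$ (nontriviality of $k_c$ follows from essentiality of $T$, since an unknotted core would make $T$ compressible in $E(k)$), and $k$ has winding number $w\ge 1$ in $V$. By the additivity of the Thurston norm along the JSJ tori (Gabai \cite{Ga}; cf.\ Schubert's genus formula for satellites), the region beyond $T$ contributes at least $w\,(2g(k_c)-1)\ge 1$ to the norm $2g(k)-1$ of the longitudinal class. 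Pushing a minimal-genus Seifert surface for $k$ forward by $f$ and invoking Gabai's equality of singular and embedded norms (as in the proof of Proposition \ref{genus}), a map that is degenerate on such a Seifert piece would yield a Seifert surface for $k'$ of genus strictly below $g(k)$, contradicting $g(k')=g(k)$. Hence every piece maps homeomorphically, the JSJ graph map is an isomorphism, $f_*$ is injective, and the reduction above gives $k=k'$.

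The most delicate point, and the main obstacle I anticipate, is the genus/winding-number bookkeeping in the previous paragraph: making precise that a degree $1$ map which is a homeomorphism on all hyperbolic pieces but degenerate on a single Seifert companion piece genuinely \emph{decreases} the Thurston norm of the longitudinal class, and doing so uniformly across the various Seifert piece types (cable spaces, composing spaces, torus-knot pieces) rather than only for companion tori of positive winding number. A secondary technical point is the quantitative passage from equality of Gromov volume to the assertion that no hyperbolic piece is collapsed and none is wasted, for which I would lean on the degree-theoretic lower bounds for simplicial volume under proper maps.
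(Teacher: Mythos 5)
Your skeleton matches the paper's: use the genus hypothesis plus Gabai to control the Seifert surface, the winding-number hypothesis to control companion tori, the volume hypothesis to control hyperbolic pieces, and Waldhausen to conclude; the reduction to injectivity of $f_*$ is sound. But the step you yourself flag as the crux is a genuine gap, and the mechanism you propose for it does not work as stated. Asserting that ``a map degenerate on a Seifert piece would yield a Seifert surface for $k'$ of genus strictly below $g(k)$'' by pushing a minimal Seifert surface $S$ forward is not an argument: $f(S)$ is a singular surface with $|\chi(f(S))| = |\chi(S)| = 2g(k')-1$, so it realizes the singular Thurston norm of the generator of $H_2(E(k'),\partial E(k'))$ and exhibits no genus drop at all. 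To extract a drop one must first exhibit an essential curve of $S$ in the kernel of $f_*$ and then compress; this is exactly the paper's Claim \ref{seifert surface}, which needs Scott's theorem \cite{Sc} to lift the dying curve to a \emph{simple} closed curve in a finite cover of $S$, surgery there, and Gabai's equality of embedded and singular norms \cite{Ga} (this is where $g(k)=g(k')$ enters). The passage from ``a Seifert piece is collapsed'' to ``a curve on $S$ dies'' is then itself the substance of the proof: the paper's Claim \ref{injective} uses Schubert's result that a minimal Seifert surface meets a companion torus $T$ in $w_T$ parallel longitudes (so $w_T\neq 0$ forces $\ell_T\subset S$), and Claim \ref{seifert piece} uses the classification of Seifert pieces of knot exteriors, the fact that such a piece fibres over $S^1$ with horizontal fibre contained in $S$, and a centralizer argument for the fibre class. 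Your Schubert-genus-formula bookkeeping does not substitute for this chain.

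Your ``standard position'' step is also circular. Transversality to the JSJ tori $\TT'$ of $E(k')$ gives essential preimage tori, but it does not give ``a surjection of JSJ graphs together with proper degree $1$ maps between corresponding pieces''; that $f$ respects the JSJ decompositions at all is a conclusion of the theorem, not an available normalization, and the paper obtains it only \emph{after} its injectivity claims, via the enclosing property of the characteristic submanifold \cite{JS} together with Soma's rigidity theorem \cite{So1}. Likewise, equality of total Gromov volume plus additivity over pieces does not by itself yield a volume-preserving bijection of hyperbolic pieces with degree-one restrictions (a priori a hyperbolic piece of $E(k)$ need not map into a single piece of $E(k')$, so there is no pair of equal-volume manifolds to which Gromov--Thurston rigidity applies); handling this is precisely the content of \cite{So1}, which is the tool the paper invokes in its Claim \ref{hyperbolic piece}.
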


\subsection{Proof of Theorem \ref{rigidity}}

We prove Theorem \ref{rigidity} by establishing a sequence of claims.

\begin{Claim}\label{seifert surface} Let $f: E(k)\to E(k')$ be a  degree 1 map and let $(S, \partial S)\subset (E(k),
\partial E(k))$ be a Seifert surface of minimal genus $g(k)$. Then the
restriction $f|_* : \pi_1(S)\to \pi_1(E(k'))$ is injective.
\end{Claim}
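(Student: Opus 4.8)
The plan is to reinterpret the map in question as a \emph{singular} Seifert surface for $k'$, to show that this singular surface has least possible genus, and then to invoke the principle that a least-genus singular surface is $\pi_1$-injective. Write $\iota\colon S\hookrightarrow E(k)$ for the inclusion and set $g=f\circ\iota\colon S\to E(k')$, so that $f|_*=g_*$; it is the injectivity of this single map that I must establish. (Since $S$ is of minimal genus it is incompressible, so $\iota_*$ is already injective, and the real content is that $f_*$ does not collapse the surface subgroup.) Because $f$ has been normalized so that $f|_{\partial E(k)}$ is a homeomorphism carrying $l_1$ to $l_2$, the restriction $g$ sends $\partial S$ homeomorphically onto the longitude $l_2\subset\partial E(k')$. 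Thus $g$ is a singular Seifert surface for $k'$ spanned by a surface of genus $g(S)=g(k)$.

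Next I would bring in the genus hypothesis together with Gabai's theorem. By assumption $g(k)=g(k')$, and by Gabai's equality of the embedded and singular Thurston norms \cite{Ga} (the same input underlying Proposition \ref{genus}), the minimal genus among \emph{all} singular Seifert surfaces for $k'$ is exactly $g(k')$. Hence $g$ realizes this minimum: it is a least-genus singular Seifert surface for $k'$. This is the step that forces the genus hypothesis to be used, and indeed it must be, since without it the conclusion is false: for the domination $k\ge O$ of Proposition \ref{dominateunknot}, $\pi_1(S)$ is free of rank $2g(k)\ge 2$ while $\pi_1(E(O))\cong\Z$, so no injection is possible.

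Finally I would argue that a least-genus singular Seifert surface is $\pi_1$-injective. Suppose for contradiction that $\ker g_*\neq 1$. The goal is to homotope $g$, rel $\partial S$, so as to strictly reduce the genus of the spanning surface while keeping it a singular Seifert surface for $k'$: a nonzero kernel element should let one compress away a handle, producing a singular Seifert surface $S_0\to E(k')$ with $\mathrm{genus}(S_0)<\mathrm{genus}(S)=g(k)=g(k')$, contradicting the minimality just established. Concretely I would appeal to Edmonds' analysis of maps of surfaces, writing $g$ (up to homotopy rel $\partial S$) as a pinch followed by a $\pi_1$-injective map, together with a branched covering; a nontrivial pinch lowers the genus, while a nontrivial branched covering only raises it and so is excluded by minimality. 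Since $\partial S$ maps to the essential longitude $l_2$, the boundary contributes nothing to the kernel and the homotopy can be arranged rel $\partial S$. The contradiction gives $g_*=f|_*$ injective.

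The hard part will be exactly this last step, namely verifying that a nontrivial element of $\ker g_*$ actually permits a genus-reducing homotopy. The subtlety is that a kernel element of the free group $\pi_1(S)$ need not be represented by an essential simple closed curve, so one cannot naively compress along it; this is precisely the difficulty that the singular analogue of the loop theorem (the pinch-factorization of surface maps) is designed to resolve, and I would need to state it in a form valid for a surface with boundary mapping to a fixed essential loop.
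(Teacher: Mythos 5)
Your overall strategy---interpret $f\circ\iota\colon S\to E(k')$ as a singular Seifert surface, use the genus hypothesis together with Gabai's equality of embedded and singular Thurston norms to see that it has least possible genus, and then derive a contradiction from a nontrivial kernel by compressing a handle---is exactly the skeleton of the paper's argument, and your first two steps are correct. But the step you yourself flag as ``the hard part'' contains a genuine gap, and the tool you propose for it does not exist in the form you need. Edmonds' pinch-plus-branched-covering factorization is a theorem about maps \emph{between surfaces}; your map $g\colon S\to E(k')$ has a $3$-manifold target, and the assertion that a non-$\pi_1$-injective map of a surface into a $3$-manifold can be homotoped to pinch a handle---equivalently, that $\ker g_*$ contains an element represented by an essential \emph{simple} closed curve---is precisely the Simple Loop Conjecture, which is open for general (in particular hyperbolic) $3$-manifold targets. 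So the final step cannot be completed as proposed, and the ``principle'' that a least-genus singular Seifert surface is $\pi_1$-injective is not something you can quote: it is what the claim asserts.

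The paper's proof sidesteps this with an idea your proposal is missing: Scott's theorem \cite{Sc} that a closed curve on a surface becomes simple in a suitable finite cover. Given an essential closed curve $c\subset S$ with $f|_*(c)=1$, fix a finite cover $p\colon \tilde S\to S$ of some degree $d$ in which $c$ lifts to an essential simple closed curve $\tilde c$. Since $p_*[\tilde c]$ is conjugate to a power of $c$, it still dies under $f_*$, so one may surger the singular surface $f\circ p\colon \tilde S\to E(k')$ along $\tilde c$: cut, cap with two disks, and extend the map by a null-homotopy. The surgered surface still represents the class $da\in H_2(E(k'),\partial E(k');\Z)$ but has complexity strictly less than $|\chi(\tilde S)|=d|\chi(S)|$; since $g(k)=g(k')$ forces $\|a\|=|\chi(S)|$ (your step 2) and the Thurston norm is linear on rays, this contradicts Gabai's theorem \cite{Ga} that the singular norm equals the embedded norm. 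Passing to the cover trades the hopeless problem ``find a simple closed curve in the kernel on $S$'' for the solvable one ``make the given kernel curve simple in a finite cover,'' at the harmless cost of multiplying the homology class by $d$. Without this (or some substitute for it), your argument stops exactly where you predicted it would.
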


\begin{proof} 
Otherwise there is an essential closed curve $c \subset S$
which is in the kernel of $f|_*: \pi_1(S)\to \pi_1(E(k'))$. Fix a
finite covering $p: \tilde S \to S$ of degree $d$, say, so that
$c$ can be lifted to a simple closed curve $\tilde c$ in $\tilde
S$ (\cite{Sc}). Since $f(S)$ carries a generator $a$ of
$H_2(E(k'), \partial E(k'); \mathbb Z)$ and $g(k)= g(k') > 0$, the
Thurston Norm of $a$ is $|\chi(S)|$.  It follows that $(f \circ
p)(\tilde S)$ carries $da$ and realizes its Thurston norm, which is
$|\chi(\tilde S)|=d|\chi(S)|$. However since the simple essential
closed curve $\tilde c$ lies in the kernel of $(f\circ p)_*$, we can
perform surgery on $\tilde S$ along $\tilde c$ to produce a new
surface $\tilde S^*$ and a map $g: \tilde S^*\to E(k')$ which also
represents $da$. But then the singular Thurston norm of $da$ is bounded above by
$|\chi(\tilde S^*)|$, which is strictly less than $|\chi(\tilde S)|$, contrary to Gabai's result
that the Thurston norm and singular Thurston norm coincide. 
\end{proof}

\begin{Claim}\label{injective} If $T\subset E(k)$ is any essential torus, 
then the restriction $f|_* : \pi_1(T)\to \pi_1(E(k'))$ is injective.
\end{Claim}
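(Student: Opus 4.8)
The plan is to leverage the standing hypothesis that every essential torus in $E(k)$ has nonzero winding number, together with the injectivity already established in Claim~\ref{seifert surface}. By Alexander's theorem $T$ bounds a solid torus $W\subset S^3$ with $k\subset W$; write $C$ for the core of $W$ (a companion of $k$) and $w\neq0$ for the winding number of $T$ relative to $k$. Take the basis $\pi_1(T)=\langle \mu_C,\lambda_C\rangle\cong\Z\oplus\Z$, where $\mu_C\subset T$ is the meridian of $W$ (bounding a disk in $W$ that meets $k$ in $w$ points) and $\lambda_C\subset T$ is the preferred longitude of $C$. Since $\pi_1(T)$ is free abelian of rank $2$, it suffices to prove $\ker(f|_*)=0$, and I will do this by first constraining any kernel element homologically and then eliminating the one surviving possibility with the Seifert surface.

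The homological constraint comes first. In $H_1(E(k))\cong\Z$, generated by the meridian $m$ of $k$, one has $[\mu_C]=w[m]$ (the meridian disk of $W$ meets $k$ algebraically $w$ times) and $[\lambda_C]=0$ (the longitude $\lambda_C$ bounds a Seifert surface of $C$ inside the companion exterior $E(C)=\overline{S^3\setminus W}\subset E(k)$). Because $f$ has degree $1$ (so is surjective on $H_1$, cf.\ Proposition~\ref{surjective}) and carries $m$ to the meridian of $k'$, the induced map $f_*:H_1(E(k))\to H_1(E(k'))$ is an isomorphism of infinite cyclic groups. Hence if $c=a\mu_C+b\lambda_C\in\ker(f|_*)$, then $0=f_*[c]=a\,f_*[\mu_C]+b\,f_*[\lambda_C]=aw\,f_*[m]$ in $H_1(E(k'))$; as $f_*[m]$ generates $H_1(E(k'))\cong\Z$, this forces $aw=0$, hence $a=0$ since $w\neq0$. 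Thus every element of $\ker(f|_*)$ is a multiple of $\lambda_C$, so the kernel is at most infinite cyclic, generated by $\lambda_C$ if nontrivial.

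It remains to show $f_*(\lambda_C)\neq1$, and here the winding number hypothesis is decisive. Let $S$ be a minimal genus Seifert surface for $k$, isotoped to meet $T$ transversally; using incompressibility of both $S$ and $T$ in $E(k)$ one removes every circle of $S\cap T$ inessential on either surface, so that all components are essential on $S$ and on $T$ and are parallel on $T$ to a single slope $\delta$. If $S\cap T=\emptyset$ then $S\subset W\setminus N(k)$ (since $\partial S\subset\partial N(k)\subset W$), forcing $[\partial S]=0$ in $H_1(W\setminus N(k))$; but $[\ell_k]=w\,t\neq0$ in the core direction $t$, a contradiction, so $S\cap T\neq\emptyset$. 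A short homology computation then identifies $\delta$: on one hand $[S\cap T]$ lies in the kernel of $H_1(T)\to H_1(E(C))$, which is spanned by $\lambda_C$; on the other hand the subsurface $S\cap W$ gives $[\ell_k]+[S\cap T]=0$ in $H_1(W\setminus N(k))$, so $[S\cap T]$ has nonzero $t$-component, whence $\delta=\lambda_C$. Consequently each component of $S\cap T$ is an essential simple closed curve on $S$ that is freely homotopic in $E(k)$ to $\lambda_C$. By Claim~\ref{seifert surface}, $f|_*:\pi_1(S)\to\pi_1(E(k'))$ is injective, so the image of such a curve is nontrivial; therefore $f_*(\lambda_C)\neq1$, contradicting $\lambda_C\in\ker(f|_*)$. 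Hence $\ker(f|_*)=0$.

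I expect the main obstacle to be the middle of the last paragraph: guaranteeing $S\cap T\neq\emptyset$ and pinning the intersection slope $\delta$ as $\lambda_C$. This is exactly where nonzero winding number is indispensable—when $w=0$ a minimal genus Seifert surface can be pushed entirely off $T$ (as in Example~\ref{same volume}), the longitude $\lambda_C$ need not appear on $S$, and the link to Claim~\ref{seifert surface} is lost, consistent with the theorem being false without the winding number hypothesis. The remaining steps—transversality with only essential intersections, and the reduction of the kernel to $\langle\lambda_C\rangle$—are routine once the homological bookkeeping is in place.
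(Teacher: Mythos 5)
Your proof follows essentially the same route as the paper's: a homological computation in $H_1(E(k))$, using $w \neq 0$ and the fact that $f_*$ carries the meridian to the meridian, reduces the kernel of $f|_*$ to multiples of the companion longitude, and then a minimal-genus Seifert surface arranged to meet $T$ in longitudes, combined with Claim \ref{seifert surface}, shows $f_*(\lambda_C) \neq 1$. The only presentational difference is that the paper simply cites Schubert for the fact that a minimal Seifert surface may be assumed to meet $T$ in $w_T$ copies of the longitude, whereas you derive the two facts you actually need ($S \cap T \neq \emptyset$, and intersection slope equal to $\lambda_C$) by hand from homology of $W \setminus N(k)$ and of $E(C)$; that substitution is correct and self-contained.

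One step is unjustified as written: from $\ker(f|_*) \subset \langle \lambda_C \rangle$ you assert that the kernel is ``generated by $\lambda_C$ if nontrivial,'' but a priori it could be generated by a proper power $\lambda_C^n$ with $n \geq 2$, and then your closing contradiction (``$f_*(\lambda_C) \neq 1$, contradicting $\lambda_C \in \ker(f|_*)$'') does not apply, since the kernel need not contain $\lambda_C$ itself. The repair is exactly the point the paper makes explicit: $\pi_1(E(k'))$ is torsion-free (knot exteriors are aspherical Haken manifolds), so $f_*(\lambda_C)^n = 1$ with $n \neq 0$ forces $f_*(\lambda_C) = 1$, i.e.\ a nontrivial kernel must contain $\lambda_C$. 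With that one line added, your argument is complete and matches the paper's.
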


\begin{proof} 
Let $k_T$ be the companion of $k$ such that $\partial
E(k_T)=T$, and let $(m_T, \ell_T)$ be the meridian-longitude pair of $k_T$ 
on $\partial E(k_T)$. If $w_{T}$ denotes the winding number of
$T$, one has $m_T= w_{T}m$ in $H_1(E(k); \mathbb Z)$and so for any integers $p$ and $q$,
$p\ell_T + qm_T= qw_{T} m$ in $H_1(E(k); \mathbb Z)$. Since $f_*:
H_1(E(k); \mathbb Z)\to H_1(E(k'); \mathbb Z)$ is an isomorphism
given by $f_*(m)=m'$ and $\pi_1(E(k'))$ is torsion free, it
follows that if the kernel of $f|_* : \pi_1(T)\to \pi_1(E(k'))$
is non-trivial, then it is generated by the longitude $\ell_T$ on
$T$.  As argued by Schubert, any minimal Seifert surface $S$ for $k$ may
be assumed to intersect $T$ in $w_T$ longitudes. Since by hypothesis $w_T\ne
0$, we may assume $\ell_T\subset S$ and represents a nontrivial
element of $\pi_1(S).$  But $f|_* :\pi_1(S)\to \pi_1(E(k'))$ is
injective by Claim \ref{seifert surface}, so $f_*(\ell_T)\ne 1$.
Claim \ref{injective} is proved. 
\end{proof}

\begin{Claim}\label{seifert piece} If $N\subset E(k)$ is a Seifert piece of the
JSJ-decomposition of $E(k)$, then the restriction $f|_* : \pi_1(N)\to \pi_1(E(k'))$
is injective.
\end{Claim}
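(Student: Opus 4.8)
The plan is to exploit the Seifert structure of $N$ together with the two injectivity statements already established. I would write the fibration of $N$ as a central extension
\[
1 \to \langle h\rangle \to \pi_1(N) \xrightarrow{\ p\ } \pi_1^{orb}(B) \to 1,
\]
where $h$ is the regular fibre and $B$ the base $2$-orbifold. Since $N$ is a genuine Seifert piece of the JSJ decomposition of a knot exterior, $B$ has non-empty boundary and $\chi^{orb}(B)<0$, so $\pi_1^{orb}(B)$ is a non-elementary (virtually free, word-hyperbolic) group and $\langle h\rangle$ is infinite cyclic and central. The fibre $h$ is isotopic onto some boundary torus $T_0$ of $N$; as $f$ is injective on $\pi_1(T_0)$ — by Claim \ref{injective} if $T_0$ is a JSJ torus, and because $f|_{\partial E(k)}$ is a homeomorphism if $T_0=\partial E(k)$ — we get $f_*(h)\neq 1$, and it has infinite order since $\pi_1(E(k'))$ is torsion free. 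Setting $K=\ker\big(f|_*:\pi_1(N)\to\pi_1(E(k'))\big)$, centrality of $h$ together with the infinite order of $f_*(h)$ forces $K\cap\langle h\rangle=1$, so $p$ carries $K$ isomorphically onto a normal subgroup $\bar K=p(K)\lhd\pi_1^{orb}(B)$. Because $K\le\pi_1(N)\le\pi_1(E(k))$ is torsion free, so is $\bar K$. Everything thus reduces to proving $\bar K=1$.

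To control $\bar K$ I would bring in the minimal genus Seifert surface $S$. As in Claim \ref{injective}, isotope $S$ (after Schubert) so that it meets each JSJ torus $T$ in $w_T\neq 0$ longitudes; then $S_N:=S\cap N$ is an incompressible surface in $N$ whose boundary slopes are the longitudes $\ell_T$ (and $\ell_k$ on $\partial E(k)$). The crucial point is that these slopes differ from the fibre slope of $N$, so that no component of $S_N$ is vertical and $S_N$ is horizontal. A horizontal $S_N$ is an orbifold covering of $B$, so $p|_{\pi_1(S_N)}$ is injective with finite-index image $\Gamma_0\le\pi_1^{orb}(B)$; and because $S_N$ is obtained from $S$ by cutting along the essential curves $\ell_T$, we have $\pi_1(S_N)\hookrightarrow\pi_1(S)$, whence $f_*$ is injective on $\pi_1(S_N)$ by Claim \ref{seifert surface}.

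Now suppose $\bar K\neq 1$. Being torsion free it is infinite, and $\bar K\cap\Gamma_0$ is infinite (it has finite index in $\bar K$) and normal in $\Gamma_0$. As $\Gamma_0$ has finite index in the non-elementary word-hyperbolic group $\pi_1^{orb}(B)$, it is itself non-elementary, so its infinite normal subgroup $\bar K\cap\Gamma_0$ is non-elementary and contains a free group $\langle\bar u,\bar v\rangle$ of rank two. Lifting through the isomorphism $p|_{\pi_1(S_N)}$ gives $u,v\in\pi_1(S_N)$ generating a free group of rank two, so $f_*(\langle u,v\rangle)$ is non-abelian by Claim \ref{seifert surface}. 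On the other hand $p(u),p(v)\in\bar K=p(K)$ and $p|_K$ is an isomorphism, so $u$ and $v$ differ from elements of $K$ by powers of $h$; hence $f_*(u),f_*(v)\in\langle f_*(h)\rangle$ and $f_*(\langle u,v\rangle)$ is abelian — a contradiction. Therefore $\bar K=1$, so $K=1$ and $f|_*$ is injective on $\pi_1(N)$.

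The main obstacle is the middle step: showing that $S\cap N$ is horizontal, i.e. that the regular fibre is not isotopic to the longitude on any boundary torus (equivalently, that the fibre is homologically essential in $E(k)$). This is exactly the place where the hypothesis that every essential torus has non-zero winding number must be invoked, and it is what prevents the degenerate satellite construction of Example \ref{same volume} (where the winding number is zero) from destroying injectivity. The remaining inputs — the non-elementarity of infinite normal subgroups of word-hyperbolic groups and the standard behaviour of horizontal surfaces in Seifert fibred spaces — are routine.
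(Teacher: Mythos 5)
Your overall strategy coincides with the paper's: restrict a minimal genus Seifert surface $S$ to $N$, argue that $S\cap N$ is horizontal because its boundary consists of longitudes rather than fibres, and then play the injectivity of $f_*$ on the surface group (Claim \ref{seifert surface}, giving a non-abelian free image) against the centrality of the fibre class $h$ and the fact $f_*(h)\neq 1$ (Claim \ref{injective}). Where you genuinely differ is the group-theoretic endgame. The paper notes that the base orbifold of $N$ is orientable, so a horizontal surface is non-separating, hence $N$ fibres over $S^1$ with fibre $S_0$; it then works inside the finite-index subgroup generated by $\phi$ and $\pi_1(S_0)$ and finishes with the elementary observation that a non-abelian free group has trivial centre. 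You instead push the kernel $K$ down to a normal subgroup $\bar K$ of $\pi_1^{orb}(B)$ and invoke the fact that an infinite normal subgroup of a non-elementary word-hyperbolic group contains a rank-two free group. Your version is correct and spares you the non-separating/fibration-over-$S^1$ step, but at the cost of noticeably heavier machinery than the paper needs.

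The genuine gap is precisely the step you flag as ``the main obstacle'': you never prove that $S\cap N$ is horizontal, and your pointer for closing it is misdirected. Two separate ingredients are involved. First, the fact that the fibre slope is not the longitude on any boundary torus of $N$ is \emph{unconditional} -- it does not follow from, and cannot be replaced by, the winding-number hypothesis. The paper obtains it from Lemma VI.3.4 of \cite{JS}: $N$ is a torus knot exterior, a cable space, or a composing space, and in each case one checks directly that the fibre restricted to any boundary torus has non-zero meridional coordinate (a $pq$-cabling slope, a $(p,q)$-slope, or a meridian), so it is never a longitude. This same classification is also what underwrites your unproved assertions that the base orbifold is orientable with $\chi^{orb}(B)<0$ and $\pi_1^{orb}(B)$ non-elementary (one must, for instance, exclude a disc with two cone points of order two). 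Second, the winding-number hypothesis $w_T\neq 0$ enters elsewhere: via Schubert's isotopy it guarantees that $S$ meets each JSJ torus $T$ in a \emph{non-empty} family of $w_T$ longitudes -- so that $S\cap N$ is actually a non-empty surface with longitudinal boundary -- and it is what makes Claim \ref{injective} work, hence what gives $f_*(h)\neq 1$. When $w_T=0$, as in Example \ref{same volume}, the surface $S$ can be isotoped off $T$ entirely and there is nothing to run your argument on; the fibre/longitude distinction is not the issue there. A last small point: when $T_0=\partial E(k)$, injectivity of $f_*$ on $\pi_1(T_0)$ does not follow from $f|_{\partial E(k)}$ being a homeomorphism alone; you also need $\partial E(k')$ to be incompressible, i.e.\ $k'$ non-trivial, which holds in the setting of Theorem \ref{rigidity} because $g(k')=g(k)>0$.
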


\begin{proof} 
It follows from Seifert's classification of Seifert fibre structures on $S^3$ that
$N$ is either a torus knot exterior, a cable space, or a composing space
(the product of a planar surface and a circle) with at least three boundary components
(see Lemma VI.3.4 of \cite{JS}). In particular, its  base orbifold is orientable and
therefore $N$ admits no separating, horizontal surfaces.

Let $T \subseteq \partial N$ be either $\partial E(k)$ or the torus which separates $N$ from $\partial E(k)$ 
and fix a minimal genus Seifert surface $S$ for $k$. Assume that $S$ has been isotoped
to intersect $\partial N$ minimally and recall from the proof of the previous claim that
$S \cap T$ consists of $w_T > 0$ copies of the longitude $\ell_T$. Fix a component
$S_0$ of $S \cap N$ such that $S_0 \cap T \ne \emptyset$. Clearly $S_0$ is an essential
surface in $N$ and so can be assumed to be either vertical or horizontal with respect to a
fixed Seifert structure on $N$. Now $\ell_T$ cannot be isotopic in $T$ to a Seifert fibre of
$N$ (this can verified for each of the three types of possibilities for $N$), so $S_0$
is horizontal and therefore non-separating in $N$. It follows that $N$ fibres over the circle
with fibre $S_0$. By Claim \ref{seifert surface}, $f_*|\pi_1(S_0)$ is injective and so
$f_*(\pi_1(S_0))$ is a non-abelian free group.
(It follows from the previous paragraph that $\chi(S_0) < 0$.)

Recall that the class $\phi$ of a regular fibre of $N$ is central in $\pi_1(N)$ and 
let $H$ be the group generated by $\phi$ and $\pi_1(S_0)$.
Then $H$ has finite index in $\pi_1(N)$ and since the latter is torsion free,
it suffices to show that $f_*|H$ is injective. An element of $H$ can be written
$\gamma \phi^n$ for some $\gamma \in \pi_1(S_0)$ and $n \in \mathbb Z$. Thus if
$f_*(\gamma \phi^n) = 1$, then $f_*(\phi)^n = 1$ since it is a central element of the
non-abelian free group $f_*(\pi_1(S_0))$. But $f_*(\phi) \ne 1$ by Claim \ref{injective}, 
and since  $\pi_1(E(k'))$ is torsion free we see that $n = 0$.
Then $f_*(\gamma) = 1$ so that $\gamma \phi^n = \gamma = 1$. Thus the Claim holds.
\end{proof}

\vskip 0.5 true cm

Let $E(k) = H_k \cup S_k$ and $E(k') = H_{k'} \cup S_{k'}$ where $H_k, H_{k'}$ and $S_k, S_{k'}$ are the unions of the hyperbolic and Seifert pieces of $E(k)$ and $E(k')$.

\begin{Claim}\label{hyperbolic piece} The map $f$ can be homotoped so
that:

\noindent $(1)$ $f|: (H_k, \partial H_k) \to (H_{k'}, \partial H_{k'})$ is a homeomorphism.

\noindent $(2)$ $f(S_k) = S_{k'}$.
\end{Claim}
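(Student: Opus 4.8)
The plan is to reduce the statement to Gromov--Thurston rigidity for the hyperbolic JSJ pieces, using the additivity of the Gromov volume across the JSJ tori together with a degree count made possible by the hypothesis $V(k)=V(k')$. First I would put $f$ into standard form with respect to the two JSJ decompositions. Making $f$ transverse to the JSJ tori $\TT_{k'}\subset E(k')$, the preimage $F=f^{-1}(\TT_{k'})$ is a properly embedded surface in $E(k)$. Using that $\TT_{k'}$ is incompressible and $f$ is proper, I would compress and isotope $F$, realizing each move by a homotopy of $f$, so as to make it essential and to discard all sphere, disk and inessential components; since each surviving component maps $\pi_1$-injectively into a torus of $\TT_{k'}$, it is a torus or an annulus, and a standard analysis of essential surfaces in a knot exterior lets me take $F$ to be a union of tori isotopic to JSJ tori of $E(k)$. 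After this homotopy, $f$ carries each JSJ piece $W$ of $E(k)$ into a single JSJ piece $W'$ of $E(k')$, so that $f|_W\colon(W,\partial W)\to(W',\partial W')$ is proper with a well-defined degree $d_W$.

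Next comes the volume bookkeeping, which I expect to be the conceptual core. By additivity of the Gromov volume over the JSJ tori, $V(k)=\sum_W V(W)$ and $V(k')=\sum_{W'}V(W')$, where Seifert pieces contribute $0$ and each hyperbolic piece contributes its hyperbolic volume (up to the universal constant $v_3$). A nonzero-degree map does not decrease the Gromov volume, so $V(W)\ge |d_W|\,V(W')$ whenever $f(W)\subseteq W'$; in particular a Seifert piece of $E(k)$ maps with degree $0$ to any hyperbolic piece of $E(k')$. For a fixed hyperbolic piece $H'$ of $E(k')$, the signed degrees of the pieces mapping onto it sum to $\deg f=1$, so $\sum_{W\to H'}|d_W|\ge 1$. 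Summing the estimates over all pieces gives $V(k)\ge V(k')$, and the equality $V(k)=V(k')$ forces equality throughout. This equality rules out any hyperbolic piece of $E(k)$ mapping into a Seifert piece of $E(k')$, and shows that for each hyperbolic $H'$ there is exactly one hyperbolic piece $W$ of $E(k)$ mapping onto it, necessarily with $d_W=1$ and $V(W)=V(H')$; a final count of total volume makes $H'\mapsto W$ a bijection between the hyperbolic pieces of the two exteriors, realized by degree-one, volume-preserving restrictions of $f$.

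Finally I would invoke Gromov--Thurston rigidity: a proper degree-one map between finite-volume hyperbolic $3$-manifolds of equal volume is properly homotopic to an isometry, hence to a homeomorphism carrying boundary tori to boundary tori. Performing these homotopies simultaneously on all the matched pairs and matching them along the shared JSJ tori homotopes $f$ so that $f|\colon(H_k,\partial H_k)\to(H_{k'},\partial H_{k'})$ is a homeomorphism, which is assertion $(1)$. Assertion $(2)$ is then immediate, since the complement of $H_k$ in $E(k)$ is exactly $S_k$ and the complement of $H_{k'}$ in $E(k')$ is $S_{k'}$, whence $f(S_k)=S_{k'}$.

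The main obstacle I anticipate is the first step: arranging the standard form, and in particular ruling out or absorbing stray essential annuli in $F$ and ensuring that each piece maps into a single target piece. The coordination of the boundary behaviour of the homotopies in the final step, though delicate, is governed by the injectivity already in hand from Claims \ref{injective} and \ref{seifert piece} (together with Claim \ref{seifert surface}), so it is the control of $F$ rather than the volume estimate or the rigidity input that carries the technical weight.
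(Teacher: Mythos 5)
Your argument has a genuine gap at assertion $(2)$, and it is not a cosmetic one. The volume bookkeeping in your middle section is blind to Seifert pieces of $E(k)$ that map with degree zero into hyperbolic pieces of $E(k')$: such pieces contribute $0$ to every term in your inequalities, so nothing in the equality analysis prevents them. Consequently your final sentence --- that $(2)$ is ``immediate'' once $f|H_k$ is a homeomorphism onto $H_{k'}$ --- is false as stated: a homeomorphism $f|\colon H_k \to H_{k'}$ does not prevent parts of $S_k$ from also mapping into the interior of $H_{k'}$ (extra preimages of a regular value simply occur with cancelling signs, consistent with total degree $1$). Ruling this out is exactly where Claim \ref{seifert piece} is needed, and not for ``coordination of boundary behaviour'' as you suggest: since $f$ is $\pi_1$-injective on each Seifert piece $N$, the image $f_*(\pi_1(N))$ is a non-elementary group with non-trivial centre, which cannot embed in the fundamental group of a hyperbolic piece (a non-elementary discrete subgroup of $\mathrm{PSL}_2(\mathbb{C})$ has trivial centre); equivalently, and this is the paper's route, Claim \ref{seifert piece} plus the enclosing property of the characteristic submanifold \cite{JS} lets one homotope $f$ so that $\Sigma_k$ (the Seifert part together with collars of the JSJ tori joining two hyperbolic pieces) maps into $\Sigma_{k'}$ \emph{before} any rigidity is applied. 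The rigidity homotopy is then performed fixed on $S_k$, after which $f^{-1}(S_{k'}) \subset S_k$, and surjectivity of the degree one map yields $f(S_k) = S_{k'}$. Without some version of this step, $(2)$ does not follow from $(1)$.

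Two further points. First, in your standard-form step, the assertion that the essential preimage tori are isotopic to JSJ tori of $E(k)$ is false in general: a composing space with at least four boundary components (which occurs whenever $k$ has at least three prime summands, a case allowed by the winding-number hypothesis of Theorem \ref{rigidity}) contains essential vertical tori that are not boundary-parallel; also, since the JSJ tori of $E(k')$ are closed and interior, their preimages are closed surfaces, so the annulus case you mention cannot arise. These are repairable, but the decomposition you cut along need not be the JSJ decomposition, and the bookkeeping must be adjusted accordingly. Second, the rigidity you invoke --- a proper degree one map between compact $3$-manifolds with torus boundary and hyperbolic interiors of equal volume is properly homotopic to a homeomorphism --- is not the closed Gromov--Thurston theorem; in this relative setting it is precisely Soma's rigidity theorem \cite{So1}, which is what the paper cites. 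Soma's theorem moreover already incorporates your entire volume-additivity and degree-counting argument: it says directly that a proper degree one map between Haken manifolds of equal Gromov norm can be homotoped so as to restrict to a homeomorphism between the hyperbolic parts. So the conceptual core of your proposal amounts to re-deriving the cited theorem, while the two ingredients the paper actually has to supply --- the enclosing step for the Seifert part and the deduction of $(2)$ from surjectivity --- are the ones missing from your sketch.
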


\begin{proof} 
Define $\Sigma_k$ to be the union of $S_k$ and regular
neighbourhoods of the characteristic tori connecting two
hyperbolic pieces in the JSJ decomposition of $E(k)$. Define
$\Sigma_{k'}$ similarly. By Claim 3 and the enclosing property of
characteristic submanifold theory (\cite{JS}), we may homotope
$f|\Sigma_k$ into $\Sigma_{k'}$. If $\partial E(k) \subset
\Sigma_k$ we may suppose that the homotopy leaves $f|\partial
E(k)$ invariant. Extend this homotopy to a homotopy of $f$
supported in a regular neighbourhood of $\Sigma_k$. Since the
Gromov norms of $E(k)$ and $E(k')$ are the same, by Soma's result \cite{So1} one can further
modify $f$ by a homotopy fixed on $S_k$ so that $f|H_k$ is a
homeomorphism $(H_k, \partial H_k) \to (H_{k'}, \partial H_{k'})$. Then $f^{-1}(S_{k'}) \subset S_k$ and since $f$ is
surjective we have $f(S_k) =S_{k'}$.
\end{proof}

\begin{Claim}\label{seifert preimage} Distinct neighbouring Seifert pieces of $E(k)$ are sent to distinct neighbouring Seifert pieces of $E(k')$ by $f$. Further, if $N' \subset S_{k'}$ is a Seifert piece of $E(k')$, then $f^{-1}(N')$ is a Seifert piece of $E(k)$.
\end{Claim}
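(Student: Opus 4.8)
The plan is to promote the inclusion $f(S_k)=S_{k'}$ of Claim~\ref{hyperbolic piece} to a bijective correspondence of Seifert pieces, the engine being the characteristic submanifold theory together with a count of fibre slopes on the JSJ tori. Observe first that in a knot exterior every essential torus separates, since each bounds a solid torus in $S^3$; hence the JSJ graphs of $E(k)$ and $E(k')$ are trees. Write $\mathcal{T}$ and $\mathcal{T}'$ for the JSJ tori of $E(k)$ and $E(k')$. Keeping the homotopy of Claim~\ref{hyperbolic piece} fixed on $H_k$, I would make $f$ transverse to $\mathcal{T}'$ and, using that the tori of $\mathcal{T}'$ are incompressible and that $f$ is $\pi_1$-injective on every essential torus of $E(k)$ (Claim~\ref{injective}), homotope $f$ so that $f^{-1}(\mathcal{T}')$ is incompressible, discarding inessential and boundary-parallel components. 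Each surviving component is an incompressible torus in $E(k)$, hence is isotopic either to a torus of $\mathcal{T}$ or, by the classification of incompressible surfaces in Seifert fibred spaces, to a vertical torus in the interior of a single Seifert piece (a horizontal torus cannot occur in the pieces of Claim~\ref{seifert piece}, which fibre over orientable base orbifolds with boundary). By Claim~\ref{seifert piece} and the enclosing property of the characteristic submanifold (\cite{JS}), $f$ may then be homotoped to be fibre-preserving on a neighbourhood of each such torus and, more generally, on each Seifert piece of $E(k)$.

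The heart of the argument is to eliminate the interior vertical tori by a fibre-slope count. Suppose a component of $f^{-1}(\mathcal{T}')$ were vertical in the interior of a Seifert piece $N$ of $E(k)$ and mapped onto $T'\in\mathcal{T}'$. Then $f(N)$ would meet the two Seifert pieces of $E(k')$ adjacent to $T'$, and since $f$ is fibre-preserving on both sides, the regular fibre $\phi_N$ of $N$ would be sent to a power of the fibre of the piece on one side of $T'$ and to a power of the fibre on the other side; as these two fibres have distinct slopes on $T'$ (this being exactly what makes $T'$ a JSJ torus), we would get $f_*(\phi_N)=1$, contradicting the injectivity of $f_*$ on $\pi_1(N)$ (Claim~\ref{seifert piece}). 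Hence $f^{-1}(\mathcal{T}')$ is isotopic to a subfamily of $\mathcal{T}$, every Seifert piece $N$ of $E(k)$ is carried into a single Seifert piece $\Phi(N)$, and $f$ takes $\mathcal{T}$ into $\mathcal{T}'$. The same count yields the first assertion: if neighbouring pieces $N_1,N_2$ sharing a torus $T\in\mathcal{T}$ had $\Phi(N_1)=\Phi(N_2)=N'$, then $f(T)$ would be a vertical torus interior to $N'$, so the two distinct fibre slopes of $N_1$ and $N_2$ on $T$ would both map into the cyclic group generated by the fibre of $N'$ on $f(T)$, forcing $f_*$ to have nontrivial kernel on $\pi_1(T)$ and contradicting Claim~\ref{injective}. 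Thus distinct neighbours go to distinct neighbours.

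It remains to prove that $\Phi$ is a bijection and that $f^{-1}(N')$ is a single piece, which I would extract from a degree-one count organised along the trees. Root the JSJ trees of $E(k)$ and $E(k')$ at the outermost pieces containing $\partial E(k)$ and $\partial E(k')$; these correspond because $f|\partial E(k)$ is a homeomorphism. Together with the bijection of hyperbolic pieces from Claim~\ref{hyperbolic piece}, the fact that $\Phi$ sends Seifert pieces to Seifert pieces and $\mathcal{T}$ into $\mathcal{T}'$ gives a surjective (as $f$ is surjective) simplicial map of these trees which is a homeomorphism on each interface torus. Since $f$ is orientation preserving and, by Claim~\ref{seifert piece}, $\pi_1$-injective and fibre-preserving on each Seifert piece, every restriction $f|N\colon N\to\Phi(N)$ is proper of positive degree determined by its behaviour on $\partial N$; propagating the degree-one boundary data outward-to-inward from the root then forces each $f|N$ to have degree one. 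Consequently, for a Seifert piece $N'$ of $E(k')$ the identity $\sum_{\Phi(N)=N'}\deg(f|N)=1$ with positive integer summands leaves exactly one piece in $\Phi^{-1}(N')$, so $f^{-1}(N')$ is a single Seifert piece, proving the second assertion. I expect the genuine difficulty to reside in the first paragraph: isotoping $f^{-1}(\mathcal{T}')$ into standard position and invoking the characteristic-pair theory so that $f$ becomes simultaneously fibre-preserving on adjacent pieces — with no spurious vertical tori subdividing a piece — is where the real work lies, after which the slope and degree counts are essentially formal.
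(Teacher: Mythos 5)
Your treatment of the first assertion is, at bottom, the paper's argument: if adjacent Seifert pieces $N_1,N_2$ of $E(k)$ with common torus $T$ both mapped into a single piece $N'$, then their fibre classes $\phi_1,\phi_2$, which generate a rank two subgroup of $\pi_1(T)\cong\mathbb{Z}^2$, would both have image in the cyclic subgroup generated by the fibre class of $N'$, contradicting the injectivity of $f_*$ on $\pi_1(T)$ (Claim \ref{injective}). The difference is how one knows that $f_*(\phi_j)$ is a power of the fibre class of $N'$. The paper derives this algebraically: by Claim \ref{seifert piece} the element $f_*(\phi_j)$ has a non-abelian centralizer in $\pi_1(E(k'))$, and Addendum VI.1.8 of Theorem VI.1.6 in \cite{JS} then identifies such an element as a power of the fibre class. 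You instead homotope $f$ to be fibre-preserving on every Seifert piece. That is a deformation theorem for \emph{maps} (not homeomorphisms or homotopy equivalences), a much heavier tool with exceptional cases, and its compatibility with simultaneously keeping $f^{-1}(\mathcal{T}')$ incompressible and in standard position is itself a substantial claim, not something to assert; the paper's centralizer argument renders your entire first paragraph unnecessary. Up to this caveat, your first two paragraphs are sound and parallel the paper.

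The genuine gap is in your final paragraph. From $\sum_{\Phi(N)=N'}\deg(f|N)=1$ you can conclude nothing about the number of summands unless each summand is positive, and nothing you have proved gives this: the local degrees of the components of $f^{-1}(N')$ are integers constrained only to sum to $1$, so configurations such as $2,-1$ or $1,0$ are not excluded. Your proposed justification --- propagating the degree-one boundary data inward from the root --- is circular: to push degree one across an interface torus $T'$ you must already know that $f^{-1}(T')$ is a single torus mapped with degree one, which is essentially the statement being proved; a priori $f^{-1}(T')$ is a union of several JSJ tori of $E(k)$ whose individual degrees are unconstrained. (Your aside that the induced tree map ``is a homeomorphism on each interface torus'' is likewise not available at this stage; in the paper this only emerges in the proof of Claim \ref{winding number non-zero}, by applying Waldhausen's theorem inductively \emph{after} Claim \ref{seifert preimage} is in hand.) The paper's own route uses no degree theory here: since every essential torus in a knot exterior separates, $\Gamma(k)$ and $\Gamma(k')$ are trees, and Claim \ref{hyperbolic piece} ($f$ restricting to a homeomorphism $H_k \to H_{k'}$ with $f(S_k)=S_{k'}$) combined with the first assertion shows that $f$ induces an isomorphism of rooted trees $\Gamma(k)\to\Gamma(k')$ --- injectivity coming from local injectivity of a simplicial map of trees, surjectivity from the surjectivity of $f$ --- whence $f^{-1}(N')$ is the single piece corresponding to $N'$. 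Replacing your degree count by this combinatorial argument is what closes the gap.
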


\begin{proof} 
Suppose that there are distinct but non-disjoint Seifert pieces $N_1, N_2$ of $E(k)$ which are sent into $N'$ by $f$. Since $S^3$ is simply-connected, $N_1 \cap N_2$ is a torus $T$ and if $\phi_1, \phi_2 \in \pi_1(T) \cong \mathbb Z^2$ represent the fibre classes of $N_1, N_2$ respectively, they generate a $\mathbb Z^2$ subgroup of $\pi_1(T)$. Claim \ref{injective} shows the latter statement also holds for  $f_*(\phi_1), f_*(\phi_2)$. On the other hand, Claim \ref{seifert piece} implies that $f_*(\phi_j)$ has a non-abelian centralizer in $\pi_1(E(k'))$ and so Addendum VI.1.8 of Theorem VI.1.6 in  \cite{JS} implies that $f_*(\phi_j)$ is a power of the fibre class of $N'$ ($j = 1, 2$). But then $f_*(\phi_1)$ and $f_*(\phi_2)$ lie in a $\mathbb Z$ subgroup of $\pi_1(E(k'))$, which we have seen is impossible. Thus distinct neighbouring Seifert pieces of $E(k)$ are sent to distinct neighbouring Seifert pieces of $E(k')$ by $f$.

The dual graph $\Gamma(k)$ to the JSJ-decomposition of $E(k)$ is a rooted tree where the root vertex $v_0$ corresponds to the vertex manifold containing  $\partial E(k)$. For each vertex $v$ of $\Gamma(k)$, we use $X_v$ to denote the corresponding vertex manifold. Define $\Gamma(k'), v_0'$, and $X_{v'}$ similarly. Since $S^3$ is simply-connected, both $\Gamma(k)$ and $\Gamma(k')$ are trees. Hence, Claim \ref{hyperbolic piece} and the conclusion of the previous paragraph imply that $f$ induces an isomorphism between these trees, which proves the claim.
\end{proof}

\medskip
Claims \ref{hyperbolic piece} and \ref{seifert preimage} imply that for each piece $X'$ of $E(k')$, there is a unique piece $X$ of $E(k)$ such that $f: (X, \partial X) \to (X', \partial X')$. If $v$ is the vertex of $\Gamma(k)$ corresponding to $X$, we let $f(v)$ denote the vertex of $\Gamma(k')$ corresponding to $X'$.

Theorem \ref{rigidity} is a consequence of our final claim:

\begin{Claim}\label{winding number non-zero}
The map $f$ can be homotoped to a homeomorphism.
\end{Claim}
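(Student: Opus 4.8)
The plan is to promote $f$ to a homeomorphism one JSJ piece at a time, exploiting that Claim \ref{hyperbolic piece} already makes $f$ a homeomorphism on the hyperbolic part $H_k$ and that Claim \ref{seifert preimage} matches the pieces of $E(k)$ and $E(k')$ bijectively via an isomorphism of the dual trees. First I would check that each restricted map $f\colon (X_v,\partial X_v)\to (X_{f(v)},\partial X_{f(v)})$ is proper of degree $1$. This is a localization of the global degree: since $f^{-1}(X_{f(v)})=X_v$ is a single piece, every preimage of a regular value in the interior of $X_{f(v)}$ lies in the interior of $X_v$, so the sum of local degrees computing the global degree $1$ is precisely the degree of the restricted map. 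Feeding this through the connecting homomorphism $\partial\colon H_3(X_v,\partial X_v)\to H_2(\partial X_v)$ and using that the tree isomorphism carries each boundary torus of $X_v$ onto a distinct boundary torus of $X_{f(v)}$, I would deduce that $f$ has degree $1$ on each individual JSJ torus, and likewise on $\partial E(k)$.

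Next I would verify that $f$ induces an isomorphism of fundamental groups on every piece. Injectivity is exactly Claims \ref{seifert surface}--\ref{seifert piece} for the Seifert pieces and Claim \ref{hyperbolic piece} for the hyperbolic ones, while surjectivity of each restricted map follows because proper degree-one maps are $\pi_1$-surjective (the argument behind Proposition \ref{surjective}). In particular each boundary-torus map $f|_T\colon T\to T'$ is a degree-one, $\pi_1$-injective self-map of a torus, hence induces an isomorphism $\mathbb Z^2\to\mathbb Z^2$ and is therefore homotopic to a homeomorphism. Performing these homotopies in disjoint collar neighbourhoods, I would arrange $f$ to be a homeomorphism simultaneously on $\partial E(k)$ and on every JSJ torus, leaving the already-homeomorphic hyperbolic part of Claim \ref{hyperbolic piece} undisturbed.

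With the boundary behaviour thus fixed, each Seifert piece $X_v$ (a torus knot exterior, cable space, or composing space, by Claim \ref{seifert piece}) is an aspherical Haken manifold, and the restricted $f\colon X_v\to X_{f(v)}$ is a $\pi_1$-isomorphism of $K(\pi,1)$'s, hence a homotopy equivalence, which now restricts to a homeomorphism on $\partial X_v$. By Waldhausen's theorem (cf.\ \cite{He}) it is homotopic rel $\partial X_v$ to a homeomorphism. Since all of these rel-boundary homotopies, together with the identity homotopy on $H_k$, fix the JSJ tori, they assemble into a single global homotopy carrying $f$ to a homeomorphism $E(k)\to E(k')$. Hence $E(k)\cong E(k')$, and because knots are determined by their complements \cite{GL} we conclude $k=k'$, which is exactly Theorem \ref{rigidity}.

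The step I expect to be the main obstacle is the global compatibility that makes the piecewise homotopies patch together. Waldhausen's theorem is applied piece by piece, but it requires the map to already be a homeomorphism on the \emph{entire} boundary of each piece, so the delicate point is the simultaneous normalization of $f$ on all the separating tori: one must get the degree and $\pi_1$ bookkeeping on each JSJ torus exactly right so that every torus restriction is genuinely a homeomorphism rather than merely a $\pi_1$-injection, and then ensure the resulting rel-boundary homotopies on neighbouring Seifert and hyperbolic pieces agree along the shared tori. Once that gluing is arranged the conclusion is immediate, so the real work lies in this boundary bookkeeping rather than in any single piece.
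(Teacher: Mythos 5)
Your argument is correct, but it takes a genuinely different route from the paper's. The paper never localizes the degree to individual pieces: it applies Waldhausen's theorem in its covering-space form (Theorem 13.6 of \cite{He}), using only the $\pi_1$-injectivity from Claims \ref{injective}--\ref{hyperbolic piece} to homotope $f$ on the root piece $X_{v_0}$ (the one containing $\partial E(k)$), rel $\partial E(k)$, to a covering map; the degree-$1$ behaviour of $f|\partial E(k)$ then forces this covering to be one-sheeted, hence a homeomorphism. It propagates this outward through the rooted tree $\Gamma(k)$: once $X_{v_0}$ maps homeomorphically, each adjacent separating torus $T_i$ maps homeomorphically, so the same covering-map argument applied rel $T_i$ upgrades $f$ on the next layer of pieces, and induction over the tree finishes. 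You instead establish degree $1$ on every piece at once, using Claim \ref{seifert preimage} (preimages of pieces are pieces) to localize the fundamental class, deduce $\pi_1$-surjectivity of each restriction, combine with the injectivity of Claim \ref{seifert piece} to get $\pi_1$-isomorphisms, normalize $f$ on all JSJ tori simultaneously, and then apply the homotopy-equivalence-rel-boundary form of Waldhausen's theorem to all pieces in parallel. Both are sound. Your version buys a stronger intermediate statement (each piece-to-piece restriction is a proper degree-$1$ map and a homotopy equivalence) and replaces induction by a one-shot gluing; the paper's version needs no degree-localization lemma and no separate normalization of the tori, since the root-to-leaves induction lets the boundary and the tree structure supply the degree information exactly where it is needed. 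Your worry about patching resolves itself just as you say: because each Waldhausen homotopy is rel the entire boundary of its piece, the homotopies are constant on the shared tori and assemble automatically. Note finally that your last step (invoking \cite{GL} to get $k=k'$) proves Theorem \ref{rigidity} itself, which is more than the Claim asserts; the Claim only requires that $f$ be homotopic to a homeomorphism.
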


\begin{proof} 
Given the conclusions of Claims \ref{injective}, \ref{seifert piece} and \ref{hyperbolic piece}, classic work of Waldhausen shows that the restriction $f|: (X_{v_0}, \partial X_{v_0}) \to (X'_{v_0'}, \partial X_{v_0'}')$ is homotopic (rel $\partial E(k)$) to a covering map (see Theorem 13.6 of \cite{He}). Since $f| \partial E(k)$ has degree $1$, $f|: (X_{v_0}, \partial X_{v_0}) \to (X'_{v_0'}, \partial X_{v_0'}')$ can be homotoped to a homeomorphism. (This is automatic of course if $X_{v_0}$ is hyperbolic.) In particular $|\partial X_{v_0}| = |\partial X'_{v_0'}|$.

Now suppose that the vertices of $\Gamma(k)$ (respectively $\Gamma
(k')$) adjacent to $v_0$ (respectively $v'_0$) are $v_1,...v_p$ (respectively
$v_1',...,v_p'$). Let $T_i$ be the torus $X_{v_0} \cap X_{v_i}$, $1 \leq i \leq p$ and $T_i'$ its image by $f$. By Claim \ref{seifert preimage}, $f$ cannot send $X_{v_i}$ to $X'_{v'_0}$, $i \ne 0$, so we may assume that $f(X_i)\subset X'_{v'_i}$ for $i = 1, \dots , p$. Since $f|: T_i\to T'_i$ is a homeomorphism, the argument of the previous paragraph shows that for each $i$, $f: X_{v_i} \to X'_{v'_i}$ is homotopic to a homeomorphism (rel $T_i$). Proceeding by induction we see that for each vertex $v$ of $\Gamma(k)$, $f|: (X_v, \partial X_v) \to (X'_{f(v)}, \partial X'_{f(v)})$ is a homeomorphism. Since $f$ induces an isomorphism $\Gamma(k) \to \Gamma(k')$, the proof of the claim is complete. 
\end{proof}

\section{Double branched covers}\label{double-cover}

Let $M_q(k)$ denote the $q$-fold cyclic branched covering of $S^3$ over the knot $k$.

\begin{Theorem}\label{double cover}
Suppose $k\ge k'$.  Then

\noindent $(1)$ $M_2(k)\ge M_2(k')$, that is, a degree 1 map $M_2(k)\to M_2(k')$ exists; and 

\noindent $(2)$  $M_2(k)=M_2(k') \implies k=k'$.
\end{Theorem}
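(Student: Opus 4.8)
The plan is to build a degree $1$ map between the branched covers by lifting $f$ to the cyclic double covers of the exteriors and then filling in the branch loci, and to prove (2) by observing that the resulting map is \emph{automatically} equivariant, so that the hypothesis $M_2(k)=M_2(k')$ can be leveraged, via Hopficity and rigidity, to force an equivariant homeomorphism that descends to the knots.

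For (1), recall that $M_2(k)$ is obtained from the connected double cover $\tilde E(k)\to E(k)$ associated to the composite $\epsilon_k\colon \pi_1E(k)\to H_1(E(k))=\Z\to\Z/2$, by Dehn filling its boundary torus (connected because $\epsilon_k(m)\neq 0$) along the lift $\tilde m$ of the meridian, namely the slope with $p_*[\tilde m]=2[m]$; the core of the filling solid torus is the branch locus $\tilde k$. Since $f$ is a $1$-domination we have $f(m)=m'$, so $f_*\colon H_1(E(k))\to H_1(E(k'))$ carries the generator to the generator and hence $\epsilon_{k'}\circ f_*=\epsilon_k$. Therefore $f$ lifts to a map $\tilde f\colon \tilde E(k)\to \tilde E(k')$ of double covers. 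As $\tilde f$ covers $f$ through degree $2$ coverings it again has degree $1$, and on the boundary torus it is the lift of the homeomorphism $f|\partial E(k)$, hence a homeomorphism carrying $\tilde m$ to $\tilde m'$. Consequently $\tilde f$ extends over the filling solid tori by a homeomorphism, producing a degree $1$ map $\bar f\colon M_2(k)\to M_2(k')$; this proves $M_2(k)\ge M_2(k')$.

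For (2), the key observation is that $\tilde f$ is automatically equivariant with respect to the covering involutions $\tau_k,\tau_{k'}$: both $\tilde f\circ\tau_k$ and $\tau_{k'}\circ\tilde f$ are lifts of $f\circ p$, so they differ by a deck transformation, and the alternative $\tilde f\circ\tau_k=\tilde f$ is impossible, since it would force $f$ to lift through $\tilde E(k')\to E(k')$, contradicting $\epsilon_k\neq 0$. Extending equivariantly over the branch solid tori (where the involutions are the standard rotations) we may take $\bar f\colon (M_2(k),\tau_k)\to (M_2(k'),\tau_{k'})$ to be equivariant and to carry $\tilde k$ to $\tilde{k'}$. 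Now $\bar f$ has degree $1$, so $\bar f_*\colon \pi_1M_2(k)\to\pi_1M_2(k')$ is surjective; since $M_2(k)=M_2(k')$ these groups are abstractly isomorphic, and as closed $3$-manifold groups they are Hopfian (being residually finite), whence $\bar f_*$ is an isomorphism. If this common group is trivial then $M_2(k)=M_2(k')=S^3$, and by the solution of the Smith conjecture $k=k'=O$. Otherwise, a degree $1$ map of closed oriented $3$-manifolds inducing a $\pi_1$-isomorphism is a homotopy equivalence, so $\bar f$ is an equivariant homotopy equivalence.

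It remains to upgrade this equivariant homotopy equivalence to an equivariant homeomorphism, and this is the crux. Here I would invoke geometrization: pass to the equivariant JSJ/geometric decompositions of $M_2(k)$ and $M_2(k')$ and argue piece by piece along the dual tree, exactly as in the proof of Theorem \ref{rigidity}, using equivariant Mostow rigidity on the hyperbolic pieces and Waldhausen's theorem on the Seifert pieces. Equivalently, one may phrase everything downstairs: the five lemma applied to the exact sequences $1\to\pi_1M_2(k)\to\pi_1^{orb}(S^3,k)\to\Z/2\to 1$ shows that $\bar f$ induces an isomorphism of orbifold fundamental groups preserving the meridional peripheral structure, and the orbifold geometrization theorem together with orbifold rigidity then identifies the orbifolds $(S^3,k)$ and $(S^3,k')$. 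Either way one obtains a homeomorphism of pairs $(S^3,k)\to(S^3,k')$, so $k=k'$. The main obstacle is precisely this last rigidity step: the homotopy equivalence must be promoted to a homeomorphism \emph{respecting the involutions} (equivalently, the orbifold-group isomorphism must be geometrically realized), which forces one into the equivariant and orbifold versions of the Mostow and Waldhausen theorems and requires extra care for the pieces with finite fundamental group, where homotopy equivalence alone is too weak.
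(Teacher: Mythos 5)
Your proposal is correct and follows essentially the same route as the paper: part (1) is the paper's lift of $f$ to the $2$-fold covers of the exteriors extended over the filling solid tori (the paper builds the lift by cutting along Seifert surfaces but records exactly this lifting interpretation), and part (2) uses the paper's chain of surjectivity of $\hat f_*$, Hopficity of $\pi_1 M_2(k)$, the five lemma on $1 \to \pi_1 M_2(k) \to \pi_1 E(k)/\langle m^2\rangle \to \Z/2 \to 1$, and then orbifold geometrization together with rigidity of $\pi$-orbifold groups (Boileau--Zimmermann in the infinite case, the classification of spherical Montesinos knots in the finite case). The additional steps you include (automatic equivariance of the lift, the homotopy-equivalence upgrade, the Smith conjecture in the trivial-group case) are sound but become unnecessary once one works directly with the $\pi$-orbifold groups as the paper does.
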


\begin{proof}
 Suppose that there is a degree $1$ map $f: E(k_1)\to E(k_2)$.
 We may assume that  $f|: \partial E(k_1)\to \partial E(k_2)$ is a  homeomorphism which sends $m_1$ to $m_2$.
 
(1) Pick a Seifert surface $F'$ of $k'$. We may assume that $f$ has been homotoped relatively to the boundary
to be transverse to $F'$ and so that $F=f^{-1}(F')$ is connected. Then $F$ is a Seifert surface
of $k$. Then $f$ restricts to a proper degree $1$ map between $E(k)$ cut open along $F$, which we denote by $E(k)\setminus F$, and $E(k')\setminus F'$. 
This restriction can be assumed to be a homeomorphism between the two copies of $F$ in $\partial(E(k)\setminus F)$ to the two copies of $F'$ in $\partial(E(k')\setminus F')$.
By gluing two copies of $E(k)\setminus F$, respectively $E(k')\setminus F'$, along these copies of $F$,
respectively of $F'$, we obtain a degree $1$ map from the $2$-fold covering of $E(k)$ to the $2$-fold cyclic
covering of $E(k')$, which extends to a degree $1$ map $\hat f$ from $M_2(k)$ to $M_2(k')$.
 In other words, the degree $1$ map $f: E(k_1)\to E(k_2)$ lifts to a degee 1 map between the 2-fold coverings of the knot exteriors, which extends to a degree $1$ map $\hat f: M_2(k) \to M_2(k')$.

(2)  A degree $1$ map, $f$ induces an epimorphism $f_*: \pi_1E(k_1)\to \pi_1 E(k_2)$ such that $f_*(m_1) = m_2)$, hence it induces an epmorphism 

 $$\bar f_*: \pi_1E(k_1)/m_1^2\to \pi_1 E(k_2)/m_2^2$$
 and we have the commutative diagram

$$\xymatrix{
  1  \ar[r] & \pi_1( M_2(k)) \ar[d]_{\hat f_*}\ar[r] & \pi_1(E(k)/m_1^2)\ar[d]_{\bar f_*} \ar[r] & \Z_2\ar[d]_{\cong}\ar[r] & 1\\
  1  \ar[r] & \pi_1(M_2(k')) \ar[r] & \pi_1(E(k')/m^2_2) \ar[r] & \Z_2\ar[r] & 1
  }
$$

Since $\hat f_*$ is induced by a degree $1$ map, it is surjective. Therefore $\hat f_*$ is an isomorphism, because 
$M_2(k)=M_2(k')$ and 3-manifold groups are Hopfian.
By the Five Lemma, $\bar f_*$ is also an isomorphism. Assertion (2) follows now from the geometrization of 3-orbifolds with singular locus a link \cite{BP}, a theorem of Boileau-Zimmermann 
about $\pi$-orbifolds groups \cite{BZ} when $\pi_1(M_2(k))$ is infinite and the classification of spherical Montesinos knots when $\pi_1(M_2(k))$ is finite.
\end{proof}

Mutant knots have the same double branched covering, so Proposition \ref{double cover} implies the following.

\begin{Corollary}\label{cor:mutant}
There is no $1$-domination between distinct mutant knots.
\end{Corollary}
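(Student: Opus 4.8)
The plan is to deduce Corollary \ref{cor:mutant} directly from Theorem \ref{double cover}, using the classical fact about mutation. First I would recall the setup: if $k_1$ and $k_2$ are mutant knots, then by definition there is a Conway sphere (a $2$-sphere meeting the knot in four points) such that $k_2$ is obtained from $k_1$ by cutting along this sphere and regluing via an involution of the four-punctured sphere. The key input is the well-known theorem (due to Viro, and exploited extensively by Montesinos and others) that mutation does not change the double branched cover: $M_2(k_1) \cong M_2(k_2)$. This is because the mutation involution of the Conway sphere lifts to the double branched cover as a map that extends across the regluing, identifying the two double covers.

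The argument then runs as follows. Suppose, for contradiction, that there is a non-trivial $1$-domination between distinct mutants, say $k_1 > k_2$ with $k_1 \neq k_2$ (the two directions are symmetric since mutation is an involution, so it suffices to treat $k_1 \ge k_2$). Since $k_1$ and $k_2$ are mutants, we have $M_2(k_1) = M_2(k_2)$ by the fact recalled above. But Theorem \ref{double cover}(2) asserts precisely that $k_1 \ge k_2$ together with $M_2(k_1) = M_2(k_2)$ forces $k_1 = k_2$, contradicting our assumption that the knots are distinct. Hence no such $1$-domination can exist.

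The main (and essentially only) obstacle is not in the deductive logic, which is immediate, but in correctly invoking the equality of double branched covers for mutants and making sure the orientations and markings match the hypotheses of Theorem \ref{double cover}. One should verify that the homeomorphism $M_2(k_1) \cong M_2(k_2)$ produced by mutation is orientation-preserving, so that it genuinely certifies $M_2(k_1) = M_2(k_2)$ in the sense used in assertion (2); this is standard for the mutation involution but worth stating. I would therefore present the proof in two short sentences: cite the mutation-invariance of $M_2$, then apply Theorem \ref{double cover}(2). No further rigidity or geometrization input is needed beyond what is already packaged inside the theorem.

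\begin{proof}
It is a classical fact that mutant knots have homeomorphic double branched coverings, so if $k_1$ and $k_2$ are mutants then $M_2(k_1) = M_2(k_2)$. Were there a non-trivial $1$-domination between distinct mutants, say $k_1 \ge k_2$ with $k_1 \neq k_2$, then Theorem \ref{double cover}(2) would imply $k_1 = k_2$, a contradiction.
\end{proof}
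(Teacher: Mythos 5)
Your proof is correct and follows exactly the paper's argument: the paper likewise notes that mutant knots have the same double branched covering and then applies assertion (2) of Theorem \ref{double cover} to conclude that no $1$-domination between distinct mutants can exist. Your extra remark about checking that the mutation homeomorphism of double branched covers is orientation-preserving is a reasonable point of care, but it does not change the route.
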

 
\noindent Ruberman has shown that if $k, k'$ are mutants, then $E(k)$ is hyperbolic if and only if $E(k')$ is, and in this case, both have the same volume \cite{Ru}. 
Hence in this situation, Corollary \ref{cor:mutant} follows from the Gromov-Thurston rigidity theorem. 

Assertion $(1)$ of Theorem \ref{double cover} provides a connection between $1$-domination among knots, left orderable groups and L-spaces.

\begin{Definition}  $\;$ \\ 
{\rm $(1)$ A group is {\it left-orderable} if there is a total ordering $<$ of its elements which is left-invariant: $x < y$ if and only if $zx < zy$ for all $x, y$ and $z$.

\noindent $(2)$ An {\it L-space} is a closed rational homology $3$-sphere whose Heegaard-Floer homology $\widehat {HF}(M)$  is a free abelian group of rank 
 equal to $|H_1(M,\Z)|$. }
\end{Definition}

\begin{Proposition}\label{left orderable} {\rm (\cite{BRoW})}
Suppose $G$ and $G'$ are nontrivial fundamental groups of irreducible 3-manifolds and there is a surjection $G\to G'$.
 If $G'$ is left orderable, then $G$ is left orderable.
\end{Proposition}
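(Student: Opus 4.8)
The plan is to exploit the short exact sequence $1 \to K \to G \stackrel{\phi}{\to} G' \to 1$, where $K = \ker\phi$, and to combine two ingredients: the class of left-orderable groups is closed under extensions, and the kernel $K$ will turn out to be left-orderable because it is the fundamental group of an \emph{open} (non-compact) irreducible $3$-manifold. Granting that $K$ is left-orderable, the extension-closure property finishes the argument at once: given a left order on $K$ and one on $G'$, one orders $G$ by first comparing images in $G'$ and, for elements $g_1,g_2$ with the same image (so that $g_1 g_2^{-1}\in K$), breaking the tie with the order on $K$; this is readily checked to be a left-invariant total order. Thus the entire statement reduces to showing that $K$ is left-orderable.

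To order $K$, I would pass to topology. Write $G = \pi_1(M)$ for a compact orientable irreducible $3$-manifold $M$, as in the situations of interest (knot exteriors and their branched covers). Then $K = \pi_1(\widetilde M)$, where $\widetilde M \to M$ is the cover corresponding to $K$. Here the nontriviality hypothesis on $G'$ is essential: a nontrivial left-orderable group is torsion-free and hence infinite, so $[G:K] = |G'| = \infty$ and $\widetilde M$ is a \emph{non-compact} irreducible $3$-manifold. I would then show that $K$ is locally indicable, i.e. that every nontrivial finitely generated subgroup $H \le K$ surjects onto $\Z$. Any such $H$ has infinite index in $G$; passing to the associated cover, which is non-compact with finitely generated group $H$, Scott's compact core theorem supplies a compact core $C$ with $\pi_1(C)\cong H$. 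Since this cover is connected and non-compact, $C$ cannot be closed, and using irreducibility to cap off or discard any $2$-sphere boundary components we may arrange that $\partial C \neq \emptyset$ has only higher-genus components. The half-lives-half-dies principle then forces $b_1(C) > 0$, whence $H \twoheadrightarrow H_1(C;\Z) \twoheadrightarrow \Z$.

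By the Burns--Hale theorem a locally indicable group is left-orderable, so $K$ is left-orderable, and the extension argument of the first paragraph then shows $G$ is left-orderable. The main obstacle is the middle step, the local indicability of $K$: this is exactly the place where one must use that the relevant cover is non-compact (equivalently, that $G'$ is nontrivial, so $K$ has infinite index) and where genuinely three-dimensional input enters through Scott's compact core theorem and half-lives-half-dies. I would emphasize that $G$ itself need not be locally indicable --- for example $\pi_1(M_2(k))$ is the group of a rational homology sphere and admits no surjection onto $\Z$ --- so a direct argument on $G$ cannot work, and the reduction to the infinite-index subgroup $K$ is indispensable.
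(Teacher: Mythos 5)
Your argument is correct, but note that the paper itself gives no proof of this proposition: it is quoted from \cite{BRoW}, so the comparison must be with the argument there. In \cite{BRoW} the Burns--Hale criterion is applied directly to $G$: a nontrivial finitely generated subgroup $H \le G$ either has nontrivial image under $\phi\colon G \to G'$, in which case $\phi(H)$ is a nontrivial left-orderable quotient of $H$ (subgroups of left-orderable groups being left-orderable), or else $H \le \ker\phi$, in which case $H$ has infinite index in $G$ and the compact-core/half-lives-half-dies argument you give shows $H$ surjects onto $\Z$. You instead isolate the kernel $K = \ker\phi$, prove $K$ is locally indicable (hence left-orderable by Burns--Hale), and conclude via closure of left-orderability under extensions. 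The two proofs rest on exactly the same pillars --- the Burns--Hale theorem and the fact that a nontrivial finitely generated infinite-index subgroup of a compact orientable irreducible $3$-manifold group surjects onto $\Z$ (Scott's compact core, irreducibility of covers via Meeks--Simon--Yau, half-lives-half-dies) --- so yours is a repackaging rather than a genuinely new route; what it buys is a stronger intermediate conclusion ($K$ is locally indicable, not merely left-orderable) and it replaces the two-case analysis inside Burns--Hale by the elementary extension-closure lemma. Your closing observation that the reduction to $K$ is indispensable, since $G$ itself (e.g.\ $\pi_1(M_2(k))$ for a rational homology sphere $M_2(k)$) admits no surjection to $\Z$, is exactly right and is the same mechanism that forces the case division in \cite{BRoW}. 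One small slip to repair: in the extension step, the tie-break for $\phi(g_1)=\phi(g_2)$ must compare $g_1^{-1}g_2$ with $1$ in $K$, not $g_1 g_2^{-1}$; with the latter, left-invariance of the resulting order would require the order on $K$ to be invariant under conjugation by $G$, which you do not have. With $g_1^{-1}g_2$ the verification goes through verbatim.
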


\begin{Corollary} \label{lodomination}
If $\pi_1M_2(k_1)$ is not left orderable but $\pi_1M_2(k_2)$ is, then $k_1$ does not $1$-dominate $k_2$.
\end{Corollary}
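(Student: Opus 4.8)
The plan is to derive Corollary~\ref{lodomination} as a direct logical consequence of Assertion~(1) of Theorem~\ref{double cover} combined with Proposition~\ref{left orderable}. I would argue by contraposition on the domination hypothesis. Suppose, for contradiction, that $k_1 \ge k_2$. The goal is to show that this forces $\pi_1 M_2(k_2)$ to be non-left-orderable, contradicting the standing assumption that it is left orderable.

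First I would feed the hypothesis $k_1 \ge k_2$ into Assertion~(1) of Theorem~\ref{double cover}, which produces a degree $1$ map $\hat f: M_2(k_1) \to M_2(k_2)$ between the double branched covers. Since degree $1$ maps are surjective on fundamental groups (this is the content of Proposition~\ref{surjective}, and more generally a standard fact recalled in the antisymmetry argument of the introduction), $\hat f$ induces an epimorphism $\hat f_*: \pi_1 M_2(k_1) \to \pi_1 M_2(k_2)$. Next I would verify that the two groups meet the hypotheses of Proposition~\ref{left orderable}: both must be nontrivial fundamental groups of irreducible $3$-manifolds. Nontriviality and irreducibility of $M_2(k_i)$ for nontrivial knots follow from the standard structure theory of branched covers (double branched covers of $S^3$ over nontrivial knots are irreducible by the equivariant sphere theorem / Waldhausen), so I would cite that, handling the unknot case separately if needed since then $M_2(k)=S^3$.

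With the epimorphism $\hat f_*: \pi_1 M_2(k_1) \twoheadrightarrow \pi_1 M_2(k_2)$ in hand and $\pi_1 M_2(k_2)$ assumed left orderable, Proposition~\ref{left orderable} applies directly (with $G = \pi_1 M_2(k_1)$ and $G' = \pi_1 M_2(k_2)$) to conclude that $\pi_1 M_2(k_1)$ is left orderable. This contradicts the hypothesis that $\pi_1 M_2(k_1)$ is \emph{not} left orderable. Therefore the assumption $k_1 \ge k_2$ is untenable, i.e.\ $k_1$ does not $1$-dominate $k_2$, which is exactly the assertion of the corollary.

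The only genuine obstacle is bookkeeping rather than mathematics: confirming that the hypotheses of Proposition~\ref{left orderable} (nontriviality and irreducibility of both $3$-manifold groups) are actually satisfied so that the proposition can be invoked, and ensuring the epimorphism runs in the correct direction. Everything substantive has already been established upstream---the degree $1$ map on branched covers is Theorem~\ref{double cover}(1), and the transport of left-orderability along surjections is Proposition~\ref{left orderable}---so the proof is essentially a one-line chaining of these two results via contraposition.
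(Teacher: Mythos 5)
Your proposal is correct and follows exactly the route the paper intends: the paper states Corollary \ref{lodomination} with no explicit proof, as an immediate chaining of Theorem \ref{double cover}(1) (a degree $1$ map $M_2(k_1)\to M_2(k_2)$, hence a surjection $\pi_1M_2(k_1)\to\pi_1M_2(k_2)$) with Proposition \ref{left orderable}, argued by contraposition just as you do.

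One caveat concerns the hypothesis-checking, which is the only place you go beyond the paper: your claim that double branched covers of $S^3$ over nontrivial knots are always irreducible is false. If $k = k' \sharp k''$ is a nontrivial connected sum, the decomposing sphere lifts, so $M_2(k) \cong M_2(k') \sharp M_2(k'')$ is reducible; the equivariant sphere theorem yields irreducibility only for \emph{prime} knots. Thus Proposition \ref{left orderable} applies verbatim only when $k_1$ and $k_2$ are prime. To cover composite knots one can reduce to the prime case: $\pi_1M_2$ of a connected sum is the free product of the $\pi_1M_2$'s of the summands, and such a free product is left orderable if and only if every factor is (free products of left-orderable groups are left orderable, and subgroups of left-orderable groups are left orderable). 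The paper glosses over this point entirely, so your attempt is if anything more careful than the source; just replace the incorrect irreducibility citation with this prime-decomposition reduction.
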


\noindent The left orderabilty of $\pi_1M_2(k)$ can be determined for certain family of knots. For instance, Boyer-Gordon-Watson showed that this is never the case for non-trivial alternating knots $k$ \cite{BGW}. 
For each Montesinos knot $k$,  $M_2(k)$ is a Seifert manifold, and work of Boyer-Rolfsen-Wiest \cite{BRoW} combines with that of Jankins-Neumann \cite{JN} and Naimi \cite{Na} to determine exactly when such manifolds have left orderable fundamental groups in terms of the Seifert invariants. As a consequence, alternating knots cannot $1$-dominate certain classes of Montesinos knots.

Another result, due to Ozsvath-Szabo, states that $M_2(k)$ is an L-space for each 
alternating knot $k$  \cite{OS}.  This and other evidence corroborates the following conjecture in \cite{BGW}, which is unsolved at this writing.

\begin{Conjecture} \label{bgwconj} 
 An irreducible $3$-manifold which is a rational homology sphere is an L-space if and only if its fundamental group is not left orderable. 
 \end{Conjecture}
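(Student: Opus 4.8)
Since Conjecture \ref{bgwconj} is the L-space conjecture of Boyer--Gordon--Watson and remains open, what I can offer is the program through which one expects to prove it: interpolate both conditions through the existence of a co-orientable taut foliation. Fix an irreducible rational homology sphere $M \ne S^3$ and consider three properties: (A) $M$ is not an L-space; (B) $\pi_1(M)$ is left orderable; (C) $M$ carries a co-orientable taut foliation. The plan is to establish the equivalence of (A), (B) and (C), from which the asserted biconditional ``$M$ is an L-space $\Leftrightarrow$ $\pi_1(M)$ is not left orderable'' follows by contraposition.

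I would begin with the two implications out of (C), which are the tractable ones. For (C) $\Rightarrow$ (A): a co-orientable taut foliation can be perturbed to a contact structure (Eliashberg--Thurston) whose contact invariant in $\widehat{HF}(M)$ is non-zero, and by the theorem of Ozsv\'ath--Szab\'o this forces $\mathrm{rk}\,\widehat{HF}(M) > |H_1(M;\Z)|$, so $M$ is not an L-space. For (C) $\Rightarrow$ (B): the foliation produces a faithful action of $\pi_1(M)$ on the real line --- via the leaf space of the lifted foliation when it is $\R$-covered, and through the universal circle of Thurston and Calegari--Dunfield in general, using that $M$ is a rational homology sphere to lift the circle action to an action on $\R$ --- and a faithful orientation-preserving action on $\R$ left-orders the countable group $\pi_1(M)$. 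Much of this half is already available, including \cite{BRoW} for the Seifert case invoked in the paragraph preceding the conjecture.

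The difficulty is concentrated in the two converse implications, both of which amount to \emph{constructing} a taut foliation. To obtain (A) $\Rightarrow$ (C) I would run a sutured-manifold hierarchy in the spirit of Gabai, using the failure of the L-space condition --- detected through the surgery exact triangle in Heegaard--Floer homology and the resulting non-triviality of the Thurston norm --- to supply the decomposing surfaces needed to build the foliation. To obtain (B) $\Rightarrow$ (C) one must pass from an abstract left-ordering of $\pi_1(M)$, equivalently a faithful action on $\R$, back to a codimension-one geometric object on $M$; here there is no known general mechanism, and this is the step that keeps the conjecture unresolved.

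Accordingly, the main obstacle is the implication (B) $\Rightarrow$ (C): foliations feed naturally forward into both Heegaard--Floer homology (giving (A)) and one-dimensional dynamics (giving (B)), but the route backward from a purely algebraic ordering to a geometric foliation is unknown in general. A realistic intermediate target, attainable with current tools, is to verify the full equivalence of (A), (B) and (C) for graph manifolds and Seifert fibered rational homology spheres, where each of the three conditions can be read off explicitly from the Seifert or JSJ data --- precisely the regime exploited for Montesinos knots in the discussion above.
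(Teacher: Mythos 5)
This statement is Conjecture~\ref{bgwconj}, which the paper presents as an \emph{open} conjecture --- the sentence introducing it says it ``is unsolved at this writing'' --- so there is no proof in the paper to compare your attempt against. You correctly recognize this and offer a program rather than a proof, which is the right response; the program you describe (interpolating both conditions through the existence of a co-orientable taut foliation) is indeed the standard framework surrounding the L-space conjecture of Boyer--Gordon--Watson \cite{BGW}, and your identification of the construction of foliations as the bottleneck is accurate.

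Two substantive corrections to what you present as known, however. First, you place (C) $\Rightarrow$ (B) among ``the tractable ones,'' but it is not a theorem in the stated generality: the universal circle of Thurston and Calegari--Dunfield requires atoroidality, and, more importantly, the resulting action of $\pi_1(M)$ on the circle lifts to an action on $\R$ only when its Euler class vanishes. For a rational homology sphere $H^2(M;\Z)$ is a torsion group but need not be zero, so the lift you invoke is genuinely obstructed; this implication is itself part of what keeps the conjecture open (it is known for Seifert fibered manifolds by \cite{BRoW} together with Jankins--Neumann and Naimi, which is all the paper actually uses). Only (C) $\Rightarrow$ (A), via the Eliashberg--Thurston perturbation and the Ozsv\'ath--Szab\'o non-vanishing result, is established as you state it. Second, your route to (A) $\Rightarrow$ (C) through Gabai-style sutured hierarchies cannot work as described: Gabai's construction of taut foliations is seeded by a non-trivial class in $H_2$, and a rational homology sphere has $H_2(M;\Z)=0$, so there is no norm-minimizing closed surface to begin the hierarchy. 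That implication is likewise open, and no mechanism of the kind you sketch is currently available. None of this is a defect relative to the paper --- the paper proves nothing here --- but your proposal overstates the portion of the program that is actually settled.
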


\noindent Ozsv\'ath-Szab\'o have conjectured that an irreducible $\mathbb Z$-homology $3$-sphere is an L-space if and only if it is the $3$-sphere or the Poincar\'e homology sphere (cf. \cite[Problem 11.4 and the remarks which follow it]{Sz}). This combines with Conjecture \ref{bgwconj} to yield the following conjecture: {\it  An irreducble $\mathbb Z$-homology $3$-sphere other than $S^3$ and the Poincar\'e homology sphere has a left-orderable fundamental group}. 

Recall that the {\it determinant} of a knot $k$ is given by $|\Delta_{k}(-1)|$ and coincides with $|H_1(M_2(k),\Z)|$. Thus $M_2(k)$ is a $\mathbb Z$-homology $3$-sphere if and only if the determinant of $k$ is $1$. The discussion above leads to the following question, whose expected answer is no. 

\begin{Question}
Suppose that $k$ is alternating. 
Can $k$ $1$-dominate a nontrivial knot $k'$ with $|\Delta_{k'}(-1)|=1$? 
In particular a nontrivial knot with trivial Alexander polynomial?
\end{Question}

\noindent Here is a related question. 
\begin{Question}
Suppose that $k$ is alternating and $k\ge k'$. Does $|\Delta_{k}(-1)|=|\Delta_{k'}(-1)|$
imply that $k=k'$? 
\end{Question}

To state our next results, we need to recall some definitions: a {\it 2-string tangle} is the 3-ball $B^3$ with two disjoint properly embedded arcs $a_1 \cup a_2$. 
A {\it trivial tangle} 
is a 2-string tangle where the  arcs $a_1$ and $a_2$ bound disjoint disks
together with arcs on the boundary of $B^3$.
A {\it rational tangle} is the image of a trivial tangle by a homeomorphism of the ball fixing the end points of  the arcs $a_1$ and $a_2$: a tangle is rational if and only if the $2$-fold covering of $B^3$ branched along the arcs $a_1 \cup a_2$ is a solid torus $S^1 \times D^2$.
A well-known fact, using this double branched covering, is that rational tangles correspond to rational numbers,
called the {\it slopes} of the rational tangles: a rational tangle $T(r)$ corresponding to the rational number $r$ is obtained by first drawing
two strings of slope $r$ on the boundary $S^2(2,2,2,2)$ of the pillow-case $B$, then pushing into its interior.
A {\it Montesinos tangle} is a tangle sum of  rational tangles $T(r_1),..., T(r_n)$:
by adding two arcs on the boundary of $B$, we get the so-called 
2-bridge knots and Montesinos knots.
The double branched cover of those knots are, respectively, lens spaces and Seifert manifolds. Further, the action of the covering involution $\tau$ preserves the Seifert fibre of  the $2$-fold covering and reverses its orientation. The converse is true by the orbifold theorem \cite{BP}, see also \cite{BS} : If $M_2(k)$ is, respectively, a lens space or a
Seifert fibered manifold and $\tau$ reverses the orientation of the Seifert fibre, then 
$k$ is, respectively, a 2-bridge knot or a Montesinos knot.

\begin{Proposition}\label{Montesinos}  
Suppose $k\ge k'$. 

\noindent $(1)$ If $k$ is a 2-bridge knot, so is $k'$.

\noindent $(2)$ If $k$ is a Montesinos knot, so is $k'$.

\end{Proposition}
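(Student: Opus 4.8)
The plan is to reduce each statement to a fact about double branched covers via Theorem \ref{double cover}(1), using the orbifold-theoretic characterization of $2$-bridge and Montesinos knots quoted just above the Proposition. Suppose $k \ge k'$, so by Theorem \ref{double cover}(1) there is a degree $1$ map $\hat f: M_2(k) \to M_2(k')$. The strategy is to show that when $M_2(k)$ is a lens space (case $(1)$) or a Seifert fibered manifold (case $(2)$), then so is $M_2(k')$, and moreover that the covering involution $\tau'$ on $M_2(k')$ reverses the orientation of the relevant fibre; the converse direction of the orbifold theorem then immediately yields that $k'$ is $2$-bridge, respectively Montesinos.

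For case $(1)$, first I would note that a degree $1$ map $\hat f: M_2(k) \to M_2(k')$ induces a surjection $\hat f_*: \pi_1 M_2(k) \to \pi_1 M_2(k')$ (as in Proposition \ref{surjective}). If $k$ is $2$-bridge then $M_2(k)$ is a lens space, so $\pi_1 M_2(k)$ is finite cyclic. Its quotient $\pi_1 M_2(k')$ is therefore also finite cyclic. Since $M_2(k')$ is an irreducible (branched cover of $S^3$ over a knot, hence closed orientable) $3$-manifold with finite cyclic fundamental group, geometrization forces $M_2(k')$ to be a lens space (or $S^3$). Then I must verify that the covering involution on $M_2(k')$ reverses the fibre orientation; here I would appeal to the structure of the $\mathbb{Z}_2$-action, using the commutative diagram of Theorem \ref{double cover}(2) relating the orbifold fundamental groups, to transport the fibre-reversing property of $\tau$ across $\hat f$. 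The converse of the orbifold theorem then gives that $k'$ is $2$-bridge.

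For case $(2)$, the same framework applies but $M_2(k)$ is now a Seifert fibered space. The key extra input is that $\hat f$ is degree $1$, so it is $\pi_1$-surjective and cannot increase Gromov volume (Proposition \ref{volume}); since a Seifert fibered $M_2(k)$ has zero Gromov volume, $M_2(k')$ also has zero volume and hence contains no hyperbolic piece in its geometric decomposition. Combined with the surjection on fundamental groups and geometrization, this should force $M_2(k')$ to be Seifert fibered (one must rule out the possibility that $M_2(k')$ is a nontrivial graph manifold with more than one Seifert piece, which the $\pi_1$-surjectivity from a Seifert manifold and the tree/JSJ analysis in the spirit of Claims \ref{seifert preimage} and \ref{winding number non-zero} should preclude). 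Again I would then check that the induced involution reverses the Seifert fibre orientation and invoke the converse of the orbifold theorem.

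The main obstacle I expect is not the topological classification of $M_2(k')$ but the \emph{equivariant} bookkeeping: one needs the covering involution $\tau'$ on $M_2(k')$ to reverse the orientation of the Seifert fibre (or the lens-space fibration), and this property must be deduced from the corresponding property of $\tau$ on $M_2(k)$ together with the equivariance of $\hat f$ with respect to the two deck transformations. The construction of $\hat f$ in Theorem \ref{double cover}(1) is manifestly equivariant (it is built by doubling the cut-open exteriors along Seifert surfaces), so in principle $\hat f \circ \tau = \tau' \circ \hat f$, and I would use this, together with the isomorphism $\bar f_*$ on $\pi$-orbifold groups from the proof of Theorem \ref{double cover}(2), to match the actions on the base orbifolds and thereby transfer the fibre-reversing condition. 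Pinning down this compatibility precisely, rather than the coarse homeomorphism type of $M_2(k')$, is where the real work lies.
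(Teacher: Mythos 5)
Your framework---pass to the equivariant degree $1$ map $\tilde f: M_2(k)\to M_2(k')$ of Theorem \ref{double cover}(1), identify the topology of $M_2(k')$, then invoke the converse of the orbifold theorem---is exactly the paper's, and your part $(1)$ is essentially the paper's argument: $\pi_1M_2(k')$ is a quotient of a finite cyclic group, hence finite cyclic, and the orbifold theorem gives that $M_2(k')$ is a lens space and $k'$ is $2$-bridge. (No equivariant bookkeeping is needed there: a knot whose double branched cover is a lens space is automatically $2$-bridge, so your proposed extra verification in case $(1)$ is superfluous.)

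Part $(2)$, however, has genuine gaps. First, you never establish that $M_2(k')$ is \emph{irreducible}, and zero Gromov volume does not give this (a connected sum of lens spaces has zero volume); every tool you want to apply afterwards needs it. The paper proves it directly: an essential sphere $S'\subset M_2(k')$ pulls back to an essential surface $S$ in the Seifert manifold $M_2(k)$, which can be neither vertical (that would put the fibre class $h$ in $\ker\tilde f_*$, which is impossible since a finite cover of $M_2(k)$ is a circle bundle $W\to F$ and the composition $W\to M_2(k)\to M_2(k')$ would then factor through $F$, forcing degree zero) nor horizontal (that would give the odd-order group $H_1(M_2(k);\Z)$ a $\Z_2$ quotient). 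Second, your route to Seifert-ness is wrong: the ``tree/JSJ analysis in the spirit of Claims \ref{seifert preimage} and \ref{winding number non-zero}'' is unavailable, because those claims rest on Claims \ref{seifert surface}--\ref{seifert piece}, i.e.\ on the same-genus and same-Gromov-volume hypotheses of Theorem \ref{rigidity}, which you do not have here. The actual mechanism is the \emph{centre}: $\tilde f_*(h)$ is central in $\pi_1M_2(k')$ (image of a central element under a surjection), is nontrivial by the circle-bundle argument above, and has infinite order once irreducibility gives torsion-freeness; then \cite{CJ}, \cite{Ga2} force $M_2(k')$ to be Seifert fibred with $\tilde f_*(h)$ a power of the fibre class. (The case $\pi_1M_2(k')$ finite must be split off and handled by the orbifold theorem directly, as the centre argument does not apply.) Third, the equivariance step you defer as ``where the real work lies'' is, once the above is in place, a one-line computation that you should actually carry out: equivariance of $\tilde f$ and $\tau_*(h)=h^{-1}$ give $\tau'_*(\tilde f_*(h))=\tilde f_*(\tau_*(h))=\tilde f_*(h)^{-1}$, and since $\tilde f_*(h)$ is a power of the fibre class of $M_2(k')$, the involution $\tau'$ reverses the fibre orientation; this is what rules out the alternative that $k'$ is a torus knot and yields that $k'$ is Montesinos.
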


\begin{proof}
Let $f: E(k)\to E(k')$ be a $1$-domination and $\tau$, $\tau'$ are the covering involutions of the 2-fold branched coverings
$M_2(k)$ and $M_2(k')$.
Then we have a $\Z_2$ equivalent  degree 1 map $\tilde f : M_2(k) \to M_2(k')$ , i.e. 
$\tau\circ \tilde f= \tilde f \circ \tau'$ and a surjection
$\tilde f_*: \pi_1M_2(k)\to \pi_1 M_2(k')$.  

If $k$ is a 2-bridge knot, $M_2(k)$ is a lens space and therefore $\pi_1(M_2(k))$ is a finite cyclic group. Hence 
$\pi_1(M_2(k'))$ is finite cyclic so by the orbifol theorem \cite{BP}, $M_2(k')$ is a lens space and $k'$ is $2$-bridge. 
Next suppose that $k$ is a Montesinos knot, so that $M_2(k)$ is an irreducible Seifert manifold.  
Again it is known that $k'$ is Montesinos if $\pi_1M_2(k')$ is finite by the orbifold theorem \cite{BP}, so suppose otherwise. Then $\pi_1M_2(k)$ is 
infinite and non-cyclic, so $M_2(k)$ is a $K(\pi_1M_2(k), 1)$ space, finitely covered by a circle bundle $W$ over a closed orientable surface $F$ where the circle 
fibres of $W$ are the inverse image of the Seifert fibres of $M_2(k)$. The class $h$ of a regular fibre of $M_2(k)$ cannot be contained in the 
kernel of $\tilde f_*$ as otherwise the composition $W \to M_2(k) \to M_2(k')$ would factor through $F$, which is impossible for a non-zero degree map.   

Suppose that $M_2(k')$ is reducible and let $S'$ be an essential 2-sphere it contains. After a homotopy of $\tilde f$ we can suppose that the 
preimage $S$ of $S'$ in $M_2(k)$ is an essential surface. Now $S$ cannot be vertical as this would imply that the $h$ would be contained in the 
kernel of $\tilde f_*$. On the other hand it cannot be horizontal in $M_2(k)$ since this would imply that the odd-order abelian group 
$H_1(M_2(k); \Z)$ has a $\Z_2$ quotient. Thus $M_2(k')$ is irreducible. It follows that $\pi_1M_2(k')$ is torsion-free and therefore $\tilde f_*(h)$ has infinite order.  
But then $\pi_1M_2(k')$ has a non-trivial centre containing $\tilde f_*(h)$, so $M_2(k')$ is a Seifert manifold by \cite{CJ}, \cite{Ga2}  with $\tilde f_*(h)$ a non-trivial power of the fibre class. It follows that $k'$ 
is either a Montesinos knot or a torus knot. The former case happens if $\tau'$
reverses orientation of each fibre, and the latter case otherwise. Since $\tau'_*(\tilde f_*(h)) = \tilde f_*(\tau_*(h)) = \tilde f_*(h^{-1}) = \tilde f_*(h)^{-1}$, $k'$ is a Montesinos knot. 
\end{proof}

\begin{Remark}
{\rm Many Seifert manifolds are known to be minimal (see \cite{HWZ} for example) from which we can deduce that many Montesinos knots are minimal.}
\end{Remark}

\section{AP-property and 1-domination}\label{sec:AP}
 
A knot $k$ has the {\em AP-property} if every closed incompressible surface embedded in its complement carries an essential closed curve homotopic to a peripheral element. (Here $AP$ refers to {\it accidental parabolic}.) Small knots (i.e.~knots whose exteriors contain no closed essential surfaces) are AP knots, but so are {\it toroidally alternating knots} \cite{Ad}, a large class which contains, for instance, all hyperbolic knots which are alternating, almost alternating, or Montesinos.

A knot $k$ is {\em simple} if it is either a hyperbolic or a torus knot. This condition is equivalent to the requirement that $E(k)$ contain no essential tori. 

\begin{Proposition}\label{AP} Let $k$ be a knot with the AP-property. Then $k$ can dominate only a connected sum of simple knots or a cable of a simple knot.
\end{Proposition}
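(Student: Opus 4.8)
The plan is to reduce everything to a statement about the JSJ tori of $E(k')$: I will show that under the hypotheses every JSJ torus of $E(k')$ carries an \emph{accidental parabolic}, i.e.\ an essential slope that is homotopic in $E(k')$ into $\partial E(k')$, and then read off from this that the JSJ tree of $E(k')$ has depth one, with a composing space or a cable space at the root and simple knot exteriors as its leaves. If $E(k')$ contains no essential torus then $k'$ is already simple and there is nothing to prove, so I may assume $k'$ is a satellite and that its dual tree $\Gamma(k')$ is nontrivial. Write $f\colon E(k)\to E(k')$ for the $1$-domination, normalized as in the introduction so that $f|\partial E(k)$ is a homeomorphism onto $\partial E(k')$; recall $f_*$ is surjective by Proposition~\ref{surjective}.

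First I would put $f$ in standard form with respect to the two JSJ decompositions. Using the characteristic submanifold theory of \cite{JS} together with the results of Soma and Waldhausen on degree-one maps (\cite{So1}, and Theorem~13.6 of \cite{He}), $f$ may be homotoped so that the preimage of the family of JSJ tori of $E(k')$ is an essential family of tori in $E(k)$ and so that $f$ restricts to a proper degree-one map on each resulting piece. In particular, for each JSJ torus $T'\subset E(k')$ the preimage $f^{-1}(T')$ contains a component $\Sigma_0$ which is an essential torus of $E(k)$ mapping to $T'$ with degree $\pm1$. Being essential and closed, $\Sigma_0$ is a closed incompressible surface, so the AP-property of $k$ furnishes an essential simple closed curve $c\subset\Sigma_0$ that is homotopic in $E(k)$ into $\partial E(k)$. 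Since a degree $\pm1$ map of tori induces an isomorphism on $\pi_1$, the image $f(c)$ is essential in $T'$; and since $c$ is homotopic into $\partial E(k)$ while $f(\partial E(k))=\partial E(k')$, the curve $f(c)$ is homotopic in $E(k')$ into $\partial E(k')$. Thus every JSJ torus $T'$ of $E(k')$ carries an accidental parabolic.

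Next I would extract the structure of $\Gamma(k')$ from this. Fix a JSJ torus $T'$ and realize its accidental parabolic by a (possibly singular) annulus, which I may take essential; after making it transverse to the JSJ tori it becomes a concatenation of essential annuli running from $T'$ toward the root piece containing $\partial E(k')$. A hyperbolic piece admits no essential annulus joining distinct boundary components (nor a boundary torus to itself), so the path in $\Gamma(k')$ from $T'$ to the root can meet only Seifert pieces; and inside a Seifert piece an essential annulus meeting a boundary torus must meet it in the regular fibre. Because the regular fibres on the two sides of any JSJ torus are \emph{not} isotopic (this is precisely why the torus belongs to the JSJ family), such a concatenation cannot cross an intermediate JSJ torus: the fibre slope it produces on the far side fails to match the fibre of the next piece. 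Hence $T'$ must already be adjacent to the root, so $\Gamma(k')$ has depth one. By the classification of Seifert pieces of a knot exterior recalled in the proof of Claim~\ref{seifert piece}, the root is a torus knot exterior (whence $k'$ is simple, with no children), a composing space, or a cable space, a hyperbolic root being excluded since its boundary tori would then carry no accidental parabolic. In the composing-space case the regular fibre is the meridian and the leaves are the summand exteriors, giving a connected sum; in the cable-space case there is a single child. In either case the leaves, being terminal vertices of a depth-one tree, are simple knot exteriors, so $k'$ is a connected sum of simple knots or a cable of a simple knot.

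The main obstacle I anticipate is the first reduction: without the equal-Gromov-volume hypothesis of Theorem~\ref{rigidity} I cannot make $f$ a homeomorphism on the geometric pieces, so I must instead invoke the general splitting of a degree-one map along the JSJ decompositions and verify carefully that the preimage of each JSJ torus really does contain an essential torus mapping with degree $\pm1$, since this is exactly what makes the transfer of the accidental parabolic work. The second delicate point is the fibre/annulus bookkeeping in the last paragraph — in particular ruling out horizontal annuli and confirming that the fibre mismatch at an interior JSJ torus genuinely obstructs propagating the accidental parabolic all the way out to $\partial E(k')$.
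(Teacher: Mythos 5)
Your second half — accidental parabolics on the JSJ tori of $E(k')$ force, via essential annuli and matching of Seifert fibrations, the outermost piece to be a composing or cable space with simple knot exteriors as leaves — is essentially the paper's argument. The gap is exactly where you anticipated it, and it is not repairable in the form you propose: the ``standard form'' of your first reduction does not exist. Neither \cite{So1} nor Theorem 13.6 of \cite{He} applies: Soma's rigidity requires equal Gromov volumes (which is precisely why Theorem \ref{rigidity} carries that hypothesis), and Waldhausen's theorem requires a $\pi_1$-injective map, which a degree-one map is not. Indeed the statement you want is false in general: Kawauchi's imitation theory \cite{Ka1} produces hyperbolic knots $k$, whose exteriors contain no essential torus at all, admitting proper degree-one maps onto exteriors of satellite knots $k'$; for such maps no component of $f^{-1}(T')$ can ever be made an essential torus. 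Nothing in the AP hypothesis rescues this, since AP knot exteriors (e.g.\ those of hyperbolic alternating knots) may contain closed essential surfaces only of genus at least $2$. Moreover, even when a component of $f^{-1}(T')$ happens to be a torus, the fact that $f|: f^{-1}(W) \to W$ has degree one only forces the degrees of the components of $f^{-1}(T')$ facing $W$ to \emph{sum} to $\pm 1$, not any single one to equal $\pm 1$. Since your transfer of the accidental parabolic rests entirely on the $\pi_1$-isomorphism induced by a degree $\pm 1$ map of tori, the argument collapses at this step.

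The repair needs neither tori nor degrees, and this is the paper's device. Transversality, incompressibility of $T'$ and irreducibility give, after the standard compression argument, that each component $F$ of $f^{-1}(T')$ is a closed \emph{incompressible surface} of unspecified genus; the AP-property is stated for arbitrary closed incompressible surfaces, so it applies directly to $F$ and yields an essential curve $\gamma \subset F$ freely homotopic to an essential curve $\alpha \subset \partial E(k)$. Essentiality of the image is then certified at the boundary rather than on $T'$: since $f|\partial E(k)$ is a homeomorphism onto the incompressible torus $\partial E(k')$, the curve $f(\alpha)$ is essential in $E(k')$, hence so is its free homotope $f(\gamma) \subset T'$. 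One must also choose $F$ (replacing it by another component of $f^{-1}(T')$ if necessary) so that the homotopy lies in the outermost component of $\overline{E(k)\setminus f^{-1}(T')}$; its image then lies in the outermost component $W$ of $\overline{E(k')\setminus T'}$, and the annulus theorem of \cite{JS} converts the resulting singular annulus into an embedded essential annulus in $W$ joining $T'$ to $\partial E(k')$. From there your concluding fibre-matching argument (with horizontal annuli excluded because $S^3$ contains no Klein bottles and $T^2 \times I$ is not a JSJ piece) goes through as written.
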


\begin{proof}
Suppose that $k$ dominates a satellite knot $k'$ and let $T'$ be a JSJ torus in $E(k')$ which bounds a simple knot exterior (i.e. is innermost). Fix a degree $1$
map $f: E(k)\to E(k')$ which is transverse to $T'$. Since $E(k')$ is irreducible, $f$ can be homotoped so that
each component $F$ of $f^{-1}(T')$ is incompressible in $E(k)$. 
The AP-property implies that some essential closed curve $\gamma \subset F$ is freely homotopic to an essential closed curve 
$\alpha \subset \partial E(k)$. Up to replacing $F$ by another component of $f^{-1}(T')$, we can assume that the homotopy takes place 
in the outermost component of $\overline{E(k) \setminus f^{-1}(T')}$ (i.e. the component which contains $\partial E(k)$). 
Applying $f$ we obtain a homotopy in $W$, the outermost component of $\overline{E(k') \setminus T'}$. Since the restriction $f|: \partial E(k) \to \partial E(k')$ 
is a homeomorphism, $f(\alpha)$ is an essential closed curve on  $\partial E(k')$, and therefore the annulus theorem \cite{JS} provides an essential annulus $A$  
properly embedded in $W$ and cobounded by essential simple closed curves on $T'$ and $\partial E(k')$. We can assume that $A$ intersects the JSJ tori of $E(k')$ transversely and minimally. 
Then it intersects each of the JSJ pieces it passes through in essential, properly embedded annuli. It follows that these pieces are Seifert fibred. Further, since $S^3$ contains no Klein bottles, the annuli are vertical in their respective pieces, so their Seifert structures match up. Thus $W$ is Seifert fibred and hence a piece of $E(k')$. Since $T'$ was an arbitrary innermost JSJ torus of $E(k')$, the result follows. 
\end{proof}

\begin{Corollary}\label{connected sum} A toroidally alternating knot can dominate only a connected sum of simple knots. In particular this holds for alternating knots. 
\end{Corollary}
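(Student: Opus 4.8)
The plan is to feed Proposition \ref{AP} a sharpened form of the AP-property for toroidally alternating knots, which then eliminates the cable alternative. Since alternating knots are toroidally alternating, it suffices to treat a toroidally alternating knot $k$. Such a $k$ has the AP-property by Adams \cite{Ad}, so Proposition \ref{AP} already tells us that any $k'$ with $k \ge k'$ is either a connected sum of simple knots or a cable of a simple knot; the whole task is to rule out the latter.

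The key input I would use is the stronger form of the AP-property enjoyed by toroidally alternating knots: the accidental parabolic may be taken to be a \emph{meridian}. Concretely, every closed incompressible surface $F \subset E(k)$ carries an essential simple closed curve $\gamma$ that is freely homotopic in $E(k)$ to the meridian $m$ of $k$. Granting this, suppose for contradiction that $k'$ is a nontrivial cable of a simple knot $k_c$, and let $T' = \partial E(k_c)$ be the unique JSJ torus of $E(k')$, so that the component $W$ of $\overline{E(k') \setminus T'}$ containing $\partial E(k')$ is the cable space. Fix a degree $1$ map $f : E(k) \to E(k')$ with $f|_{\partial E(k)}$ a homeomorphism sending $m$ to $m'$ and, exactly as in the proof of Proposition \ref{AP}, homotope $f$ using the irreducibility of $E(k')$ so that each component of $f^{-1}(T')$ is incompressible. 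Choosing $F$ to be such a component and $\gamma \subset F$ the meridional accidental curve, I obtain $f(\gamma) \subset T'$ while $f(\gamma)$ is freely homotopic in $E(k')$ to $f(m) = m'$. Thus $m'$ is freely homotopic in $E(k')$ into the companion torus $T'$.

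This is precisely what fails for a cable. The cable space $W$ is Seifert fibred over an annulus with a single exceptional fibre of order $p \ge 2$, and its regular fibre meets $\partial E(k')$ in a curve of the cabling slope $pq \ne \infty$. The base orbifold is an annulus with one cone point, whose orbifold fundamental group is a free product $\mathbb{Z} * \mathbb{Z}_{p}$, and in this free-product setting the two base boundary curves are not conjugate into one another; consequently a boundary curve of $W$ is freely homotopic into a different boundary component only if it is a power of a regular fibre. Since $m'$ has slope $\infty \ne pq$, it is not a fibre and hence is \emph{not} homotopic into $T'$, contradicting the previous paragraph. Therefore $k'$ cannot be a cable, and Proposition \ref{AP} forces $k'$ to be a connected sum of simple knots. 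The statement for alternating knots is then immediate, since alternating knots are toroidally alternating.

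I expect the main obstacle to be justifying the meridional refinement of the AP-property — that for a toroidally alternating knot the accidental parabolic is a meridian rather than an arbitrary peripheral curve — since the weak version recorded in the definition permits the cabling slope and is therefore consistent with a cable. The only other point needing care is the free-homotopy computation in $W$, namely that no essential curve of meridional slope on $\partial E(k')$ can be pushed across the cable space into $T'$; this is exactly where the hypothesis $p \ge 2$ (equivalently $pq \ne 0$, so that the fibre slope is neither meridional nor longitudinal) enters.
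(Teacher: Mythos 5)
Your proposal follows the same route as the paper's proof: invoke the meridional strengthening of the AP-property for toroidally alternating knots, run it through the argument of Proposition \ref{AP}, and exclude the cable alternative because the meridian of a cable knot cannot be homotoped across the cable space into the companion torus. The refinement you flag as the main obstacle is not in fact a gap: it is precisely \cite[Corollary 3.3]{Ad}, and this is exactly the reference the paper uses. (Your normalization $f(m)=m'$ is also fine; as the paper notes, it follows from Property P \cite{KM} and is part of the standing conventions.) Indeed, on the cable-exclusion step your write-up is more detailed than the paper's, which simply asserts that the relevant homotopy ``never occurs when $W$ is a cable space''; your slope computation via the base orbifold group $\mathbb{Z} * \mathbb{Z}_p$ is the correct justification.

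There is, however, one genuine (though repairable) mismatch. What you establish is that $m'$ is freely homotopic to a curve on $T'$ \emph{in all of} $E(k')$, whereas your cable-space computation only rules out such a homotopy \emph{inside} $W$: as written, the two statements do not contradict each other, since a priori the homotopy could pass through $T'$ into $E(k_c)$ and back. The paper forecloses this by a step in the proof of Proposition \ref{AP} that you skipped: up to replacing $F$ by another component of $f^{-1}(T')$, the homotopy in $E(k)$ may be assumed to take place in the outermost component of $\overline{E(k)\setminus f^{-1}(T')}$, so that its image under $f$ lies in $W$, and the contradiction is then localized there. Alternatively, you can patch your version algebraically: $\pi_1 E(k') = \pi_1 W *_{\pi_1 T'} \pi_1 E(k_c)$, and by the conjugacy theorem for amalgamated free products, an element of the vertex group $\pi_1 W$ (such as $m'$) which is conjugate in $\pi_1 E(k')$ into the edge group $\pi_1 T'$ is already conjugate into $\pi_1 T'$ within $\pi_1 W$. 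Either repair closes the gap, after which the rest of your argument goes through.
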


\begin{proof}  
Let $k$ be a toroidally alternating knot which $1$-dominates a knot $k'$, say $f:E(k) \to E(k')$ is a degree $1$ map. We have assumed that $f$ restricts to a homeomorphism $\partial E(k) \to \partial E(k')$ and hence sends a meridian of $k$ to an essential simple closed curve on $\partial E(k')$ which normally generates $\pi_1E(k')$. Thus the Property P conjecture \cite{KM} shows that $f$ sends the meridian of $k$ to the meridian of $k'$. By Proposition \ref{AP}, $k'$ is either a product of simple knots or a cable of a simple knot. Let $W$ denote the outermost JSJ piece of $E(k')$. 

Each closed incompressible surface embedded contained in $E(k)$ carries an essential simple closed curve homotopic to a meridian of $k$ by \cite[Corollary 3.3]{Ad}, so the proof of Proposition \ref{AP} shows that there is a homotopy in $W$ between a meridian of $k'$ and an essential loop on an innermost JSJ torus of $E(k')$. But this never occurs when $W$ is a cable space, so $k'$ is a product of simple knots. 
\end{proof}

For Montesinos knots Corollary \ref{connected sum} follows from Proposition \ref{Montesinos}, since a Montesinos knot is simple by \cite{Oe}.




\section{Length of $1$-domination sequences via genus}\label{$1$-domination sequences}

\begin{Definition}  

\noindent $\;$ \\ 
{\rm \noindent $(1)$ We recall that a knot is {\it small} if each incompressible
closed surface in its exterior is boundary parallel.

\noindent $(2)$ A Seifert surface $S$ of a knot $k$ is {\it free} if
$\overline{E(k)\setminus T(S)}$ is a handlebody where $T(S)$ is a tubular neighbourhood of $S$.  A knot $k$ is {\it free},
if all its incompressible Seifert surfaces are free. For example a small knot is free.

\noindent $(3)$ Define $\hat g (k)=\text{sup} \{ g(S) | \text{$S$ is an
incompressible Seifert surfaces for $k$}\}.$  Here $g(S)$ denotes the
genus of the surface $S$.}
\end{Definition}

Classic pretzel knots with three branches are small \cite{Oe} as are 2-bridge knots [HT]. Fibered knots are free, which follows directly from the classical result that each 
incompressible Seifert surface of a fibred knot is isotopic to its fibre surface. In particular, $\hat g(k)=g(k)$ for fibred knots. If $k$ has a companion of winding number
zero, then $k$ is not free since there is an incompressible Seifert surface for $k$ which is contained in the complement of the companion torus.

Clearly $g(k) \le \hat g(k )\le \infty$ and it is possible that $\hat g(k)=\infty$ (see [Ly]). Small knots satisfy $\hat g(k) < \infty$ by \cite{Wi}. 
Non-fibred examples of knots for which $\hat g(k)=g(k)$ include non-fibred  2-bridge knots, ([HT]).  

\begin{Question} Which knots $k$ in $S^3$ have bounded $\hat g(k)$ and which are free?
 \end{Question}

\noindent Here is a construction which produces several interesting classes of free knots.

\begin{Proposition} \label{small-free}
A knot $k$ with the property that each closed, essential surface in $E(k)$ contains a loop which links $k$ homologically a non-zero number of times is free.
\end{Proposition}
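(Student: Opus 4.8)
The plan is to show that the complementary manifold $C=\overline{E(k)\setminus T(S)}$ is a handlebody for \emph{every} incompressible Seifert surface $S$ of $k$; by the definition of freeness this is exactly what is required. Fix such an $S$ of genus $g$ (the case $g=0$ being the unknot, where $C$ is a ball), and record that $\partial C$ is the closed orientable surface of genus $2g$ obtained by gluing the two cut copies $S_+,S_-$ to the annulus $\partial E(k)\setminus N(\partial S)$. Since knot exteriors are irreducible and $S$ is incompressible and two-sided, $C$ is irreducible: a $2$-sphere in $C$ bounds a ball in $E(k)$, which can be isotoped off $S$ by the usual innermost-disk argument.

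The key observation is homological: any loop $\gamma$ contained in a surface \emph{disjoint from $S$} satisfies $\mathrm{lk}(\gamma,k)=\gamma\cdot S=0$, since linking number with $k$ is computed as algebraic intersection with the Seifert surface $S$. In particular every closed surface embedded in $\mathrm{int}(C)$ (which is automatically disjoint from $S$) has all of its loops linking $k$ trivially. I also note that incompressibility passes from $C$ to $E(k)$: if a closed $F\subset C$ were incompressible in $C$ but compressible in $E(k)$, a compressing disk $D$ could be made disjoint from $S$ (innermost circles of $D\cap S$ bound disks on $S$ by incompressibility of $S$, and are removed using irreducibility of $E(k)$), yielding a compressing disk inside $C$, a contradiction.

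Now suppose, for contradiction, that $C$ is not a handlebody. Compressing $\partial C$ maximally inside $C$ produces the characteristic compression body $V$, whose frontier $F=\partial_-V$ is then a \emph{nonempty} closed surface, incompressible in $C$, with no sphere components (these would bound balls, as $C$ is irreducible); this is standard compression-body theory. By the previous paragraph each component $F_0$ of $F$ is incompressible in $E(k)$, has genus $\ge 1$, and has every loop linking $k$ trivially. To contradict the hypothesis I must check that $F_0$ is \emph{essential} in $E(k)$, i.e.\ not boundary-parallel. If $\mathrm{genus}(F_0)\ge 2$ this is automatic, since $\partial E(k)$ is a torus. The remaining case, $F_0$ a torus parallel to $\partial E(k)$, is excluded as follows: such an $F_0$ would cobound a product region $R\cong T^2\times I$ with $\partial E(k)$, and since $F_0$ is disjoint from $S$ while $\partial S=\lambda$ is a longitude on $\partial E(k)$, the surface $S\cap R$ would be properly embedded in $R$ with $\partial(S\cap R)=\lambda$; but then $[\lambda]=0$ in $H_1(R)\cong H_1(T^2)$, contradicting that a longitude is a primitive nonzero class. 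Hence $F_0$ is a closed essential surface in $E(k)$ all of whose loops link $k$ trivially, contrary to hypothesis. Therefore $C$ is a handlebody, $S$ is free, and as $S$ was an arbitrary incompressible Seifert surface, $k$ is free.

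The main obstacle is precisely this exclusion of boundary-parallel frontier components: boundary-parallel closed surfaces are not essential and so are invisible to the hypothesis, yet could a priori appear after maximal compression, and it is exactly the homological nontriviality of $\partial S=\lambda$ on $\partial E(k)$ that forbids a torus disjoint from $S$ from being $\partial$-parallel. A secondary point requiring care is the handlebody-recognition step, namely that maximal boundary compression of an irreducible manifold either terminates in balls (giving a handlebody) or leaves a genuine incompressible frontier surface.
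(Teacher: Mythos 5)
Your proof is correct and takes essentially the same route as the paper's: both arguments compress $\partial C$ maximally inside $C$ via a (maximal) compression body, note that the resulting incompressible frontier surface is disjoint from $S$ and hence carries only loops of zero linking number with $k$, and contradict the hypothesis once the frontier is shown to be essential in $E(k)$. The only difference is one of detail, not of method: where the paper dismisses the possibility that the frontier is boundary-parallel in a single sentence, you spell out the homological obstruction using the longitude $\lambda=\partial S$, a point worth making explicit.
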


\begin{proof} Suppose that $k$ is not free and choose an incompressible Seifert surface for $k$ whose complement is not a handlebody. Set $H= \overline{E(k)\setminus T(S)}$ and consider a maximal compression body $P$ for $\partial H$ in $H$. There is a decomposition
$$H = P \cup V = (\partial H \times I) \cup \text{2-handles} \cup V$$
where $V$ is a not necessarily connected, compact $3$-manifold. Since $P$ is maximal, $\partial V$ either is a finite union of 2-spheres or has a component which is incompressible in $V$. In the former case, the incompressibility of $S$ and irreducibility of $E(k)$ implies that $V$ is a finite union of $3$-balls. But then $H$ is a handlebody, contrary to our assumptions. Thus there is a component $F$ of $\partial V$ which is incompressible in $H$ and therefore in $H = V \cup  (\text{1-handles})$ and $E(k) = H \cup (S \times I)$. Since $S$ is contained in $\overline{E(k) \setminus V}$ and is not $\partial$-parallel in $E(k)$, $F$ is essential in $E(k)$. By construction, $F \cap S = \emptyset$ and so every loop on $F$ links $k$ zero times, contrary to our hypotheses. Thus $k$ must be free.
\end{proof}

\begin{Corollary}
Small knots, alternating knots, and Montesinos knots are free.
\end{Corollary}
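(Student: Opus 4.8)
The plan is to deduce the Corollary directly from Proposition \ref{small-free} by checking, for each of the three families, that every closed essential surface $F$ in the exterior $E(k)$ carries a loop whose linking number with $k$ is nonzero. Recall that a loop $\gamma \subset E(k)$ links $k$ a nonzero number of times exactly when its class $[\gamma]$ is nonzero in $H_1(E(k);\Z) \cong \Z$; in particular a meridian of $k$, which has linking number $\pm 1$, always qualifies. So it suffices to produce on each closed essential surface an essential simple closed curve homotopic to a meridian.

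For small knots the verification is vacuous: by definition the exterior of a small knot contains no closed essential surface at all, so the hypothesis of Proposition \ref{small-free} holds trivially and the knot is \emph{free}. (This recovers the remark made just after the definition of \emph{free}.)

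For alternating and Montesinos knots I would invoke the fact that both are toroidally alternating --- alternating knots directly (an alternating diagram may be regarded as a toroidal one), and Montesinos knots by Adams \cite{Ad} --- and then apply \cite[Corollary 3.3]{Ad}, which asserts that every closed incompressible surface embedded in the exterior of a toroidally alternating knot carries an essential simple closed curve homotopic to a meridian. Since such a meridian has linking number $\pm 1$ with $k$, every closed essential surface in $E(k)$ contains a loop linking $k$ a nonzero number of times, so Proposition \ref{small-free} applies and these knots are free. Note that the boundary-parallel tori, which are incompressible but not essential, also carry meridians, so no case is lost by working with the (a priori stronger) incompressibility hypothesis in \cite{Ad}.

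I expect the only real content to lie in the Montesinos case, specifically in the input that Montesinos knots belong to the toroidally alternating family so that Adams' meridian-carrying theorem applies; once that membership is granted the argument is immediate. An alternative route for the Montesinos case, avoiding the appeal to toroidal alternation, would be to run through Oertel's classification \cite{Oe} of closed incompressible surfaces in Montesinos knot exteriors and observe directly that each such surface must meet a meridian disk, again producing a loop of nonzero linking number. Either way the Corollary reduces to the single observation that a meridian links the knot once.
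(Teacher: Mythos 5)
Your proposal is correct and follows essentially the same route as the paper: both deduce the Corollary from Proposition \ref{small-free} by observing that its hypothesis is vacuous for small knots and that every closed essential surface in the exterior of an alternating or Montesinos knot carries a curve homotopic to a meridian, which links the knot once. The only divergence is the source of that meridian-carrying fact --- the paper cites Menasco \cite{Me} for alternating knots and Oertel \cite{Oe} for Montesinos knots directly, while you route both families through Adams' toroidally alternating class \cite[Corollary 3.3]{Ad}, which is precisely the input the paper itself uses in Section \ref{sec:AP}, so your version is equally valid.
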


\begin{proof} Any such knot satisfies the hypotheses of Proposition \ref{small-free} and so is free. This is obvious for small knots. On the other hand, a closed essential surface in the exterior of either an alternating knot or a Montesinos knot $k$ contains a simple closed curve which is homotopic in $E(k)$ to a meridian of $k$ (\cite{Me}, \cite{Oe}), which implies the claim. 
\end{proof}

\begin{Proposition}\label{free-free}
Suppose that $k \ge k'$.

\noindent $(1)$ If $k$ is free, then $k'$ is free.

\noindent $(2)$ If $k$ is free and $f: E(k) \to E(k')$ is a degree 1 map such that $g(S) = g(S')$ 
where $S'$ and $S=f^{-1}(S') $ are incompressible Seifert surfaces for $k', k$,  then $k = k'$.

\noindent $(3)$ $\hat g (k) \ge \hat g (k')$, and if $k$ is free with bounded $\hat g(k)$, then $\hat g (k) = \hat g (k')$ if and only if $k = k'$.

\end{Proposition}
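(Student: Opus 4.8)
The plan is to deduce all three statements from a single geometric mechanism: a proper degree $1$ map out of a handlebody detects the absence of closed essential surfaces in its image. Granting this, part $(1)$ is obtained by pulling back Seifert surfaces, part $(2)$ is a rigidity statement proved by promoting a boundary homeomorphism to a homeomorphism of handlebodies, and part $(3)$ follows formally from $(2)$ together with the monotonicity of $\hat g$.

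For $(1)$, let $S'$ be an arbitrary incompressible Seifert surface for $k'$ and let $f\colon E(k)\to E(k')$ realise $k\ge k'$, normalised so that $f|\partial E(k)$ is a homeomorphism carrying $l_1$ to $l_2$. First I would homotope $f$ to be transverse to $S'$ with $S=f^{-1}(S')$ a connected incompressible surface; since $f^{-1}(\partial S')=f^{-1}(l_2)=l_1$, this $S$ is a Seifert surface for $k$. As $k$ is free, $H=\overline{E(k)\setminus T(S)}$ is a handlebody, and $f$ cut open along $S$ gives a proper degree $1$ map $\hat f\colon H\to H'$, where $H'=\overline{E(k')\setminus T(S')}$ is irreducible (being obtained by cutting the irreducible manifold $E(k')$ along the incompressible surface $S'$). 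Suppose $H'$ were not a handlebody. Taking a maximal compression body $P'$ for $\partial H'$ in $H'$ and writing $H'=P'\cup V'$ exactly as in the proof of Proposition~\ref{small-free}, irreducibility forces some component $F'$ of $\partial_-P'$ to be a closed surface of genus at least $1$ incompressible in $H'$. After homotoping $\hat f$ so that $\hat f^{-1}(F')$ is incompressible in $H$, surjectivity of the degree $1$ map prevents $\hat f$ from being homotoped off $F'$, so $\hat f^{-1}(F')$ is a non-empty closed incompressible surface in the handlebody $H$. This is impossible, since the fundamental group of a closed surface of genus $\ge 1$ is non-free and cannot inject into the free group $\pi_1(H)$. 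Hence $H'$ is a handlebody, $S'$ is free, and as $S'$ was arbitrary, $k'$ is free.

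For $(2)$, the hypothesis $g(S)=g(S')$ makes $f|_S\colon S\to S'$ a degree $1$ map of one-holed genus-$g$ surfaces that is a homeomorphism on the boundary; being $\pi_1$-surjective between free groups of equal rank, it induces an isomorphism and is therefore homotopic rel $\partial$ to a homeomorphism. I would carry out this homotopy in a bicollar of $S$ so that $f|_S$ becomes a homeomorphism while $f^{-1}(S')=S$ is preserved. Cutting along $S$ now yields $\hat f\colon H\to H'$ whose restriction $\phi=\hat f|\partial H$ is a homeomorphism of the closed genus-$2g$ surfaces $\partial H\to\partial H'$, and $\hat f_*\colon\pi_1(H)\to\pi_1(H')$ is an isomorphism of free groups of rank $2g$ (surjective by degree $1$, injective by Hopficity). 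Because $\pi_1(\partial H)\to\pi_1(H)$ is onto for a handlebody, the commuting square relating $\phi_*$ and $\hat f_*$ shows that $\phi_*$ carries $\ker(\pi_1\partial H\to\pi_1 H)$ onto $\ker(\pi_1\partial H'\to\pi_1 H')$, so $\phi$ sends the meridian system of $H$ to that of $H'$ and therefore extends to a homeomorphism $\bar f\colon H\to H'$. Since $\phi$ restricts to $f|_S$ on each of the two copies of $S$ in $\partial H$, the homeomorphism $\bar f$ respects the regluings and descends to a homeomorphism $E(k)\to E(k')$. As knots are determined by their complements \cite{GL}, this gives $k=k'$.

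Finally, for $(3)$, any incompressible Seifert surface $S'$ of $k'$ has a connected incompressible pullback $S=f^{-1}(S')$ with $g(S)\ge g(S')$, since $f|_S$ is a $\pi_1$-surjection of free groups; thus $\hat g(k)\ge g(S)\ge g(S')$ and hence $\hat g(k)\ge\hat g(k')$. If $k$ is free with $\hat g(k)<\infty$ and $\hat g(k)=\hat g(k')$, the supremum defining $\hat g(k')$ is attained by some $S'$, for which $g(S')=\hat g(k')=\hat g(k)\ge g(S)\ge g(S')$ forces $g(S)=g(S')$, and part $(2)$ yields $k=k'$; the reverse implication is trivial. I expect the main obstacle to lie in the key lemma behind $(1)$: arranging the homotopy so that $\hat f^{-1}(F')$ is simultaneously incompressible and non-empty, and justifying that surjectivity of a degree $1$ map genuinely obstructs homotoping $f$ off an essential (possibly separating) surface $F'$. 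A secondary delicate point is the extension step in $(2)$, namely checking that a boundary homeomorphism preserving the meridian kernels extends over the handlebody and that the resulting homeomorphism is compatible with the two regluings recovering $E(k)$ and $E(k')$.
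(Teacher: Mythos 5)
Your proof is correct in substance and, for part (1) and the cut-along-$S$ reduction in part (2), it is essentially the paper's own argument: pull back $S'$, cut to obtain a proper degree $1$ map $H \to H'$ of the handlebody complement, and run the maximal-compression-body dichotomy of Proposition \ref{small-free} on $H'$, killing the incompressible case by pulling back $F'$ (you make explicit the surjectivity argument for non-emptiness of the preimage that the paper leaves implicit). You genuinely diverge in two finishing moves. In part (2), the paper concludes by quoting Waldhausen (Theorem 13.6 of \cite{He}): since $h_*$ is an isomorphism and $h|\partial H$ is a homeomorphism, $h$ is homotopic rel $\partial H$ to a homeomorphism. You instead use the classical extension criterion for handlebodies: a boundary homeomorphism extends over the handlebody if and only if it preserves $\ker(\pi_1(\partial H) \to \pi_1(H))$. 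Your commuting-square argument for kernel preservation is right, but note that this criterion is itself not free: one needs Dehn's lemma to convert kernel elements into embedded disks and then an irreducibility (cut-into-balls) argument to see that the cut-open piece is a ball; citing Waldhausen, as the paper does, packages all of that. In part (3), the paper obtains $\hat g(k) \ge \hat g(k')$ from Gabai's theorem that embedded and singular Thurston norms coincide (Corollary 6.18 of \cite{Ga}), whereas your pullback argument --- an incompressible Seifert surface of $k$ mapping to $S'$ with degree $1$ has genus at least $g(S')$ --- is more elementary and avoids Gabai entirely. Both routes are valid.

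Two points need patching. First, in parts (1) and (3) you assert that $f^{-1}(S')$ can be arranged to be connected. In (1) this is true but requires the paper's one-line justification, which you omit: any closed incompressible components would lie in the handlebody $\overline{E(k)\setminus T(S_0)}$, where $S_0$ is the component meeting $\partial E(k)$, and handlebodies contain no closed incompressible surfaces --- this is exactly where freeness of $k$ enters. In (3), however, the inequality $\hat g(k) \ge \hat g(k')$ carries no freeness hypothesis, so connectivity cannot be assumed; the fix is to work only with the component $S_0$ of $f^{-1}(S')$ containing $l_1$: the closed components map with degree $0$ into $S'$ (their fundamental classes die in $H_2(S')=0$), so $f|S_0$ has degree $1$ and $g(S_0)\ge g(S')$, which is all the inequality needs. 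Second, when you say $\phi$ ``sends the meridian system of $H$ to that of $H'$,'' what you actually obtain is that the images of a complete meridian system are disjoint simple closed curves bounding disks in $H'$; that this forces the extension is the classical lemma above rather than a tautology, so either cite it or fall back on Waldhausen as the paper does.
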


\begin{proof} 
(1) Let $S'$ be an incompressible  Seifert surface of $k'$
with genus $g(k')$, and let $f: E(k)\to E(k')$ be a degree $1$
map, transverse to $S'$, which  realizes the $1$-domination $k\ge
k'$. Since $E(k')$ is irreducible, $f$ can be homotoped so that
each component of $f^{-1}(S')$ is incompressible. Further, since
$f$ has degree $1$, exactly one component $S$ of  $f^{-1}(S')$  is a Seifert surface of $k$ and the remaining components are closed. Since $k$ is a free knot, it follows
that $S = f^{-1}(S')$. 

Let $T(S) \subset
E(k)$ be a tubular neighbourhood of $S$. Then $f$ induces a proper
degree $1$ map
$$f|: H = \overline{E(k) \setminus T(S)} \to \overline{E(k') \setminus T(S')} = H'.$$
\noindent Consider a maximal compression body $P'$ for $\partial H'$ in $H'$. There is a decomposition
$$H' = P' \cup V' = (\partial H' \times I) \cup \text{2-handles} \cup V'$$
where $V'$ is a not necessarily connected, compact $3$-manifold. Since $P'$ is maximal, $\partial V'$ either has a component $F'$ which is incompressible in $V'$   or is a finite union of 2-spheres. In the former case,  $H'$ contains closed incompressible surface $F'$, and
 $f|H$ could be homotoped rel $\partial H$ to a function $g$ such that $g^{-1}(F)$ is a closed and essential in $H$, contrary to the hypothesis that $H$ is a handlebody. Hence the latter case arises  and the incompressibility of $S'$ and irreducibility of $E(k')$ implies that $V'$ is a finite union of $3$-balls.
This shows that $H'$ is a handlebody,  and completes the proof of (1).

(2) By hypothesis, $f|: S \to S'$ is a proper degree $1$ map between homeomorphic surfaces, and as such, homotopic to a homeomorphism. Thus, after a homotopy, $f$ induces a proper degree $1$ map
$$h = f|: H = \overline{E(k) \setminus T(S)} \to \overline{E(k') \setminus T(S')} = H',$$
where $H$ and $H'$ are handlebodies of genus $2g(S)$, such that $h|: \partial H \stackrel{\cong}{\longrightarrow} \partial H'$ is a homeomorphism. The latter implies that $h_*: \pi_1(H) \to \pi_1(H')$ is surjective, and as $\pi_1(H) \cong \pi_1(H')$ are free, and therefore  Hopfian, $h_*$ is an isomorphism. Now apply Waldhausen's result (Theorem 13.6 of \cite{He}) to conclude that $h$ is homotopic rel $\partial H$ to a homeomorphism. Consequently, the same conclusion holds for $f: E(k)\to E(k')$. Thus $k=k'$.

(3) The inequality $\hat g (k) \ge \hat g (k')$ follows from the equality of immersed and
embedded genus (Corollary 6.18, \cite{Ga}). Suppose then that $k$ is free and
$\hat g (k) = \hat g (k') < \infty$. Fix a proper degree $1$ map $f: E(k) \to E(k')$
 and an incompressible Seifert surface $S'$ for $k'$ with $g(S') = \hat g(k')$.
 The proof of part (1) shows that we can find an incompressible Seifert surface
 $S \subset E(k)$ for $k$ and homotope $f$ to be transverse to $S$ and satisfy $S = f^{-1}(S')$. Then $f$ induces a proper degree $1$ map $S \to S'$ and hence, $\hat g (k) \ge g(S) \geq g(S') = \hat g (k') = \hat g (k)$. Thus (2) implies that $k = k'$.
 \end{proof}
 
\bigskip

An immediate consequence is the following

\begin{Corollary}\label{$1$-domination length in genus}
Suppose $k_0$ is a free knot  and $\hat g(k_0)$ is bounded.
Then for
any $1$-domination sequence $k_0> k_1> .... > k_n$, $n  +\hat g(k_n)\le \hat g(k_0)$. In particular the length of the sequence is at most $\hat g(k_0)$.
\end{Corollary}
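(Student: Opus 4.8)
The plan is to deduce Corollary \ref{$1$-domination length in genus} directly from the monotonicity and rigidity packaged in Proposition \ref{free-free}, using the transitivity of $\ge$ established in the introduction. The key observation is that freeness and the genus bound both propagate along a domination sequence, so that the entire chain $k_0 > k_1 > \cdots > k_n$ consists of free knots with finite $\hat g$, and the strict inequalities force $\hat g$ to drop by at least one at each step.

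First I would record that each $k_i$ is free. Since $k_0$ is free and $k_0 \ge k_1$, part (1) of Proposition \ref{free-free} gives that $k_1$ is free; iterating along the sequence (each consecutive pair being a $1$-domination) shows every $k_i$ is free. Likewise part (3) gives the monotonicity $\hat g(k_0) \ge \hat g(k_1) \ge \cdots \ge \hat g(k_n)$, so in particular every $\hat g(k_i)$ is finite and bounded by $\hat g(k_0)$; this is exactly the hypothesis needed to invoke the ``if and only if'' clause of part (3) at each free knot in the chain.

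The heart of the argument is to turn each \emph{strict} domination $k_{i} > k_{i+1}$ into a \emph{strict} drop $\hat g(k_i) > \hat g(k_{i+1})$. By part (3), since $k_i$ is free with $\hat g(k_i)$ finite, the equality $\hat g(k_i) = \hat g(k_{i+1})$ would force $k_i = k_{i+1}$, contradicting $k_i > k_{i+1}$ (strictness means $k_i \ne k_{i+1}$). Combined with the weak monotonicity $\hat g(k_i) \ge \hat g(k_{i+1})$ from part (3), we conclude $\hat g(k_i) \ge \hat g(k_{i+1}) + 1$ for each $i = 0, \dots, n-1$. Summing these $n$ inequalities telescopes to $\hat g(k_0) \ge \hat g(k_n) + n$, which rearranges to $n + \hat g(k_n) \le \hat g(k_0)$. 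Since $\hat g(k_n) \ge g(k_n) \ge 0$, this yields $n \le \hat g(k_0)$, giving the bound on the length of the sequence.

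I do not expect a serious obstacle here, as the corollary is essentially a formal bookkeeping consequence of Proposition \ref{free-free}; the only point requiring a little care is confirming that part (3)'s finiteness hypothesis ``$\hat g$ bounded'' remains valid at every knot $k_i$ in the chain, which the monotonicity in part (3) supplies automatically once one knows each $k_i$ is free. The substantive work has already been done in Proposition \ref{free-free}, whose part (2) (via Waldhausen's theorem and Hopficity of free groups) is what genuinely powers the rigidity; the corollary merely chains it together.
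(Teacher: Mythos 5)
Your proof is correct and is exactly the argument the paper intends: the corollary is stated there as an ``immediate consequence'' of Proposition \ref{free-free}, namely propagating freeness via part (1), and converting each strict domination into a strict (hence at-least-one) drop of $\hat g$ via part (3), then telescoping. The only implicit point, which you handle correctly, is that $\hat g$ is a non-negative integer when finite, so strict monotonicity gives the additive bound $n + \hat g(k_n) \le \hat g(k_0)$.
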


\section{Alexander invariant}\label{Alexander}

The proof of the following proposition is styled on classic arguments \cite{Br}. 

\begin{Proposition}\label{splitting}
If $k_1\ge k_2 $, then $\Lambda_{k_1}=\Lambda_{k_2}\oplus \Lambda$ where $\Lambda$ is a $\mathbb Z[t^{\pm 1}]$-module. In particular $\Delta_{k_2}$ divides $\Delta_{k_1}$.
\end{Proposition}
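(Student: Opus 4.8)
The plan is to realize the Alexander module as homology with local coefficients in the group ring $\Gamma := \mathbb{Z}[t^{\pm 1}]$ and to split $f_*$ using a degree-one umkehr map built from Poincaré--Lefschetz duality. Write $\Lambda_{k_i} = H_1(E(k_i); \Gamma)$, where $\Gamma$ carries the $\pi_1(E(k_i))$-action induced by the abelianization $\pi_1(E(k_i)) \to H_1(E(k_i)) \cong \langle t\rangle$; by the definition of homology with local coefficients (equivalently, Shapiro's lemma) this agrees with $H_1(\widetilde{E(k_i)}; \mathbb{Z})$ as a $\Gamma$-module, so it coincides with the module in the statement. Because the domination has been normalized so that $f|\partial$ is a homeomorphism with $f(m_1) = m_2$, the map $f$ is compatible with the two abelianizations, whence the pulled-back local system $f^*\Gamma$ on $E(k_1)$ is the standard one; equivalently, $f$ lifts to a $t$-equivariant proper map $\tilde f \colon \widetilde{E(k_1)} \to \widetilde{E(k_2)}$ of infinite cyclic covers. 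In either guise $f$ induces a $\Gamma$-linear homomorphism $f_* \colon \Lambda_{k_1} \to \Lambda_{k_2}$.

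First I would construct a $\Gamma$-linear section $\sigma$ of $f_*$. Since each $E(k_i)$ is a compact oriented $3$-manifold with boundary, capping with the fundamental class $[E(k_i), \partial E(k_i)]$ yields Lefschetz duality isomorphisms $D_i \colon H^2(E(k_i), \partial E(k_i); \Gamma) \xrightarrow{\ \cong\ } H_1(E(k_i); \Gamma)$; these are isomorphisms over $\Gamma$ even though $\Gamma$ is not a field, because duality with local coefficients involves no universal-coefficient hypothesis (we never convert cohomology back to homology by the UCT). As $f$ is a proper map of pairs it induces $f^* \colon H^2(E(k_2), \partial E(k_2); \Gamma) \to H^2(E(k_1), \partial E(k_1); \Gamma)$, and I set
\[
\sigma := D_1 \circ f^* \circ D_2^{-1} \colon \Lambda_{k_2} \longrightarrow \Lambda_{k_1}.
\]
The key computation is $f_* \circ \sigma = \mathrm{id}$, which follows from the projection formula for the cap product together with $f_*[E(k_1), \partial E(k_1)] = \deg(f)\,[E(k_2), \partial E(k_2)] = [E(k_2), \partial E(k_2)]$ --- the only point where $\deg f = 1$ is used. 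All three constituents $D_1, f^*, D_2^{-1}$ are $\Gamma$-linear, since the cap product is linear in the coefficient variable and $\tilde f$ is $t$-equivariant, so $\sigma$ is a map of $\Gamma$-modules.

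Granting $f_* \circ \sigma = \mathrm{id}$, the splitting lemma gives a $\Gamma$-module decomposition $\Lambda_{k_1} = \sigma(\Lambda_{k_2}) \oplus \ker f_* \cong \Lambda_{k_2} \oplus \Lambda$ with $\Lambda := \ker f_*$, proving the first assertion. For the divisibility, recall that $\Lambda_{k_1}$ is a finitely generated torsion $\Gamma$-module and that $\Delta_{k_i}$ is its order, namely a generator of the smallest principal ideal of $\Gamma$ containing its zeroth Fitting ideal. The summand $\Lambda$ is again torsion, and the order is multiplicative over direct sums of torsion modules (localizing at each height-one prime and using additivity of length), so $\Delta_{k_1} = \Delta_{k_2}\,\Delta_{\Lambda}$ up to a unit of $\Gamma$; in particular $\Delta_{k_2} \mid \Delta_{k_1}$.

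The only genuine obstacle is to carry the umkehr construction out $\Gamma$-equivariantly rather than merely over $\mathbb{Z}$, and this rests entirely on the normalization $f(m_1) = m_2$ recorded in the introduction: it is what guarantees both that $f^*\Gamma$ is the standard local system and that $\tilde f$ commutes with the deck transformation $t$, so that $f_*$, $f^*$ and the duality isomorphisms are all maps of $\Gamma$-modules. If one prefers Milnor's concrete dual-cell duality on $\widetilde{E(k)}$ in place of local coefficients, the same equivariance is precisely what lets one sidestep the non-compactness of the infinite cyclic cover by reducing the duality to the compact exterior.
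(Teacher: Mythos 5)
Your proof is correct, but it is not the paper's route: it is essentially the classical surgery-theoretic argument (see \cite{Br} for $\mathbb{Z}$-coefficients and \cite{Wal} for local coefficients) which the paper cites in the introduction and then deliberately replaces by a ``rather concrete proof based on \cite{Mi}.'' You stay on the compact exteriors, take coefficients in the local system $\Gamma = \mathbb{Z}[t^{\pm 1}]$, and build the umkehr section $\sigma = D_1 \circ f^* \circ D_2^{-1}$ of $f_*$ from Poincar\'e--Lefschetz duality over $\Gamma$ in one step, so that $\Gamma$-linearity of the splitting is automatic. The paper runs the same formal computation (duality-defined section, projection formula, degree one) but on the infinite cyclic covers with ordinary $\mathbb{Q}$-coefficients, where the duality invoked is Milnor's duality $u_i \cap\colon H^1(\tilde E(k_i);\mathbb{Q}) \to H_1(\tilde E(k_i);\mathbb{Q})$ (available because $H_*(\tilde E(k_i);\mathbb{Q})$ is finite-dimensional); it must then descend from $\mathbb{Q}$ to $\mathbb{Z}$ using torsion-freeness of $H_1(\tilde E(k_i);\mathbb{Z})$ and the assertion that both $\tilde f_*$ and the section $\alpha$ preserve integral homology, and finally verify $t$-equivariance by hand. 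Your version buys cleanliness: the two delicate points of the paper's proof (integrality of $\alpha$ and equivariance) are absorbed into the single statement that duality with $\Gamma$-coefficients is an isomorphism of $\Gamma$-modules, with no descent step. The paper's version buys concreteness: it avoids local-coefficient duality machinery altogether, and over the PID $\mathbb{Q}[t^{\pm 1}]$ the divisibility $\Delta_{k_2} \mid \Delta_{k_1}$ falls out of the structure theorem for modules over a PID, whereas you must invoke Fitting-ideal/order theory over the non-PID $\mathbb{Z}[t^{\pm 1}]$ (your localization at height-one primes and additivity of length is right, but it is the most technical point of your write-up). Both arguments hinge on the normalization $f(m_1)=m_2$ to identify the two infinite cyclic covers' deck actions, as you correctly emphasize.
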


\begin{proof}
 Let $\tilde E(k_i)$ be the infinite cyclic covering of
$E(k_i)$ and $t_i$ be the generator of the deck transformation
group of the infinite cyclic covering. Then $f: E(k_1)\to E(k_2)$ lifts
to a proper degree $1$ map $\tilde f: \tilde E(k_1)\to \tilde
E(k_2)$. We have  
induced homomorphisms $\tilde f_*: H_1(\tilde E(k_1);\mathbb Q)\to
H_1(\tilde E(k_2);\mathbb Q)$ and $\tilde f^*: H^1(\tilde E(k_2);\mathbb Q)\to
H^1(\tilde E(k_1);\mathbb Q)$.

Since knot complements have the homology of the circle, Assertion 5 of [Mi] shows that $H_*(\tilde E(k_1);\mathbb Q)$ is finitely dimensional 
over $\mathbb Q$.  

For each $i$ let $u_i$ be the fundamental class of $H_2(E(k_i), \partial E(k_i); \mathbb Q)$.
There is a duality isomorphism $P_i=u_i\cap : H^1(\tilde E(k_i); \mathbb Q)\to
H_1(\tilde E(k_i); \mathbb Q)$, see \cite[Assertion 9 and Section 4]{Mi}. 

Let $\alpha : H_1(\tilde E(k_2); \mathbb Q)\to H_1(\tilde E(k_1); \mathbb Q)$ be given
by $\alpha(x)=u_1\cap \tilde f^*(P^{-1}_2(x))$ for each $x\in
H_1(\tilde E(k_2); \mathbb Q)$. Then
$$ \tilde f_*\alpha(x) = \tilde f_* (u_1\cap \tilde f^*(P^{-1}_2(x))
=\tilde f^* (u_1)\cap (P^{-1}_2(x))=u_2 \cap (P^{-1}_2(x))=x,$$
Thus $\tilde f_*\alpha$ is the identity on $H_1(\tilde E(k_2);
\mathbb Q)$. It follows that
$$ H_1(\tilde E(k_1);\mathbb Q)\cong H_1(\tilde E(k_2);\mathbb Q) \oplus \text{ker} \tilde f_*.$$
Next we prove that an analogous splitting holds over $\mathbb Z$.

Since $H_1(\tilde E(k_i);\mathbb Z)$ is torsion free, there is an inclusion $\tau_*: H_1(\tilde E(k_i);\mathbb Z) \to H_1(\tilde E(k_i);\mathbb Q)$, 
and since both $\tilde f_*$ and $\alpha$ preserve integer homology, the restriction 
$\tilde f_* \alpha|H_1(\tilde E_2; \mathbb Z)$ is the identity. It follows that
$$ H_1(\tilde E(k_1);\mathbb Z)\cong H_1(\tilde E(k_2);\mathbb Z) \oplus \text{ker} \tilde f_*|.$$
It is also easy to see that $\tilde f_* t_1 = t_2 \tilde f_*$ and
$\alpha t_2 = t_1 \alpha$. Hence the splitting above gives the desired 
splitting of $\mathbb Z[t^{\pm 1}]$ modules.
\end{proof}

An immediate consequence of Propostion \ref{splitting} is that $k_1 \geq k_2$ implies that $\Delta_{k_2}$ divides $\Delta_{k_1}$. This follows from the fact that if
$k$ is a knot, then $H_1(\tilde E(k);\mathbb Q)\cong \Gamma/(p_1(t)) \oplus \ldots \oplus \Gamma/(p_n(t))$ where $p_1(t), \ldots, p_n(t) \in \Gamma = \mathbb Q[t^{\pm 1}]$ 
and $p_1(t) \cdots p_n(t) = \Delta(k)$. Thus if $\Delta_{k_{1}}$ and $\Delta_{k_{2}}$ have the same degree,
then $\Delta_{k_{1}} = \pm \Delta_{k_{2}}$.

One might hope to use band-connected sum and Murasugi sum to
produce examples of 1-dominance: see [Ka2] for definitions.  The
following direct application of Proposition \ref{splitting} shows that this fails in general.

\begin{Example}  
{\rm Figure 2 is  a band connected sum $k$ of the trefoil
knot $3_1$ and the trivial knot with $\Delta_k(t)=1-t^2+t^4$,
which does not have $\Delta_{3_1}(t)=1-t+t^2$ as a factor. It
follows that band connected sum does not $1$-dominate its factors in
general.}
\end{Example}

\begin{center}%
\includegraphics[totalheight=5cm]{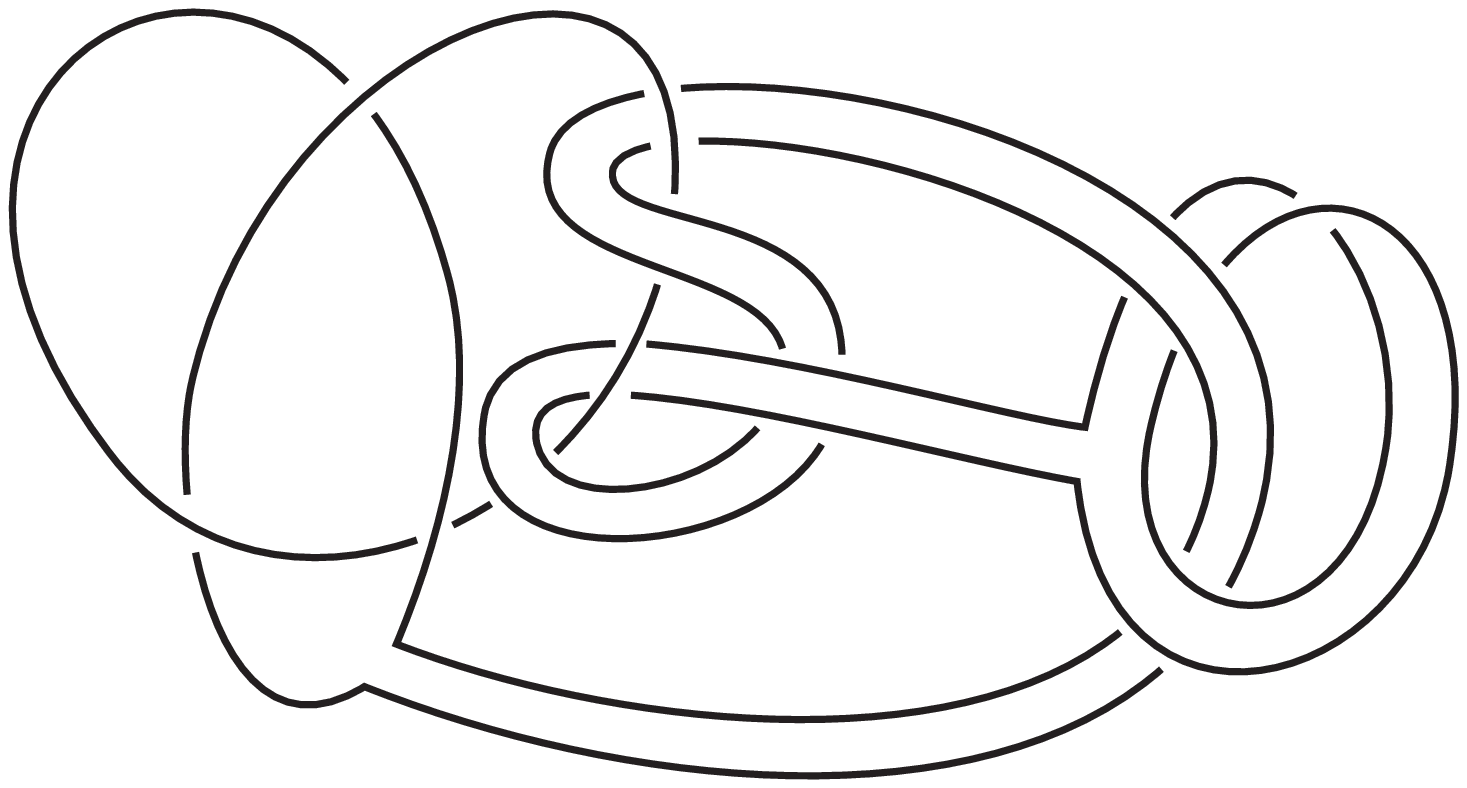}%
\begin{center}%
Figure 2
\end{center}
\end{center}

\begin{Example}  
{\rm Figure 3 is a Murasugi sum $k$ of $5_2$ and $4_1$
with $\Delta_k(t)=2-3t+3t^2-3t^3+2t^4$, which contain neither
$\Delta_{4_1}(t)=1-3t+t^2$ nor $\Delta_{5_2}(t)=2-3t+2t^2$ as a
factor. It follows that Murasugi sum does not,  in general,
$1$-dominate its factors.}
\end{Example}

\begin{center}%
\includegraphics[totalheight=7cm]{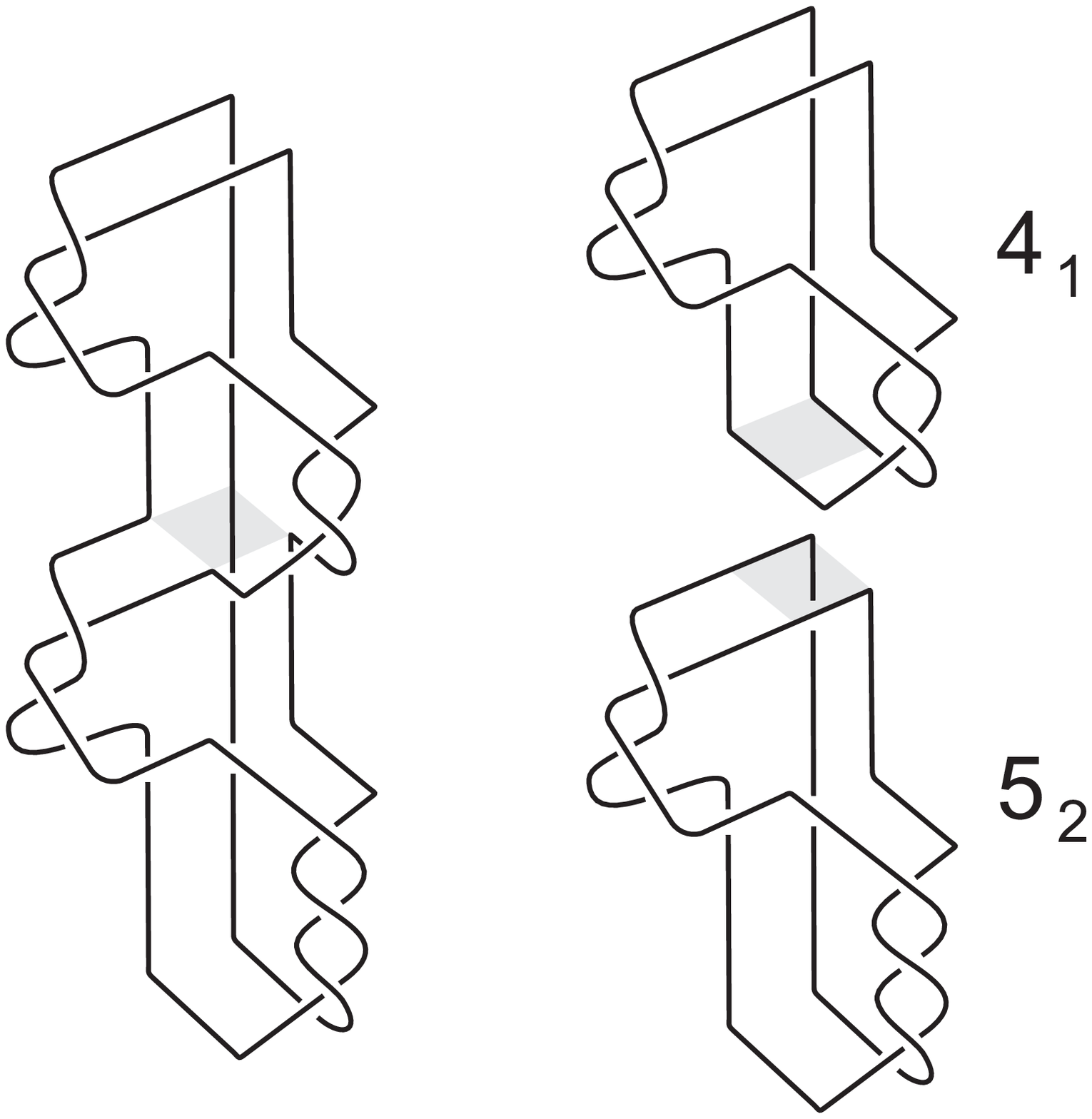}%
\begin{center}%
Figure 3
\end{center}
\end{center}

Referring to the definition of satellite knots in Section \ref{satellite},
if the winding number of $k_p$ in $V$ is $\pm 1$, then there is a
proper degree $1$ map $g: V\setminus N(k_p)\to S^1\times S^1\times
[0,1]$ which is a homeomorphism on the boundaries (see [Du]).
This provides a $1$-domination $k_s \ge k_c$.
The next example shows that $k_s \ge k_c$ need not hold without the assumption of winding number $\pm 1$, as
$\Delta_{k_c}$ does not divide $\Delta_{k_s}$.

\begin{Example}\label{cable}

{\rm Let $k_s$ be the $(2,3)$ cable of the figure-eight knot $k_c =4_1$.  That is, $k_s$
is the satellite of $k_c$ defined by the pattern knot, which is a trefoil, $k_p = 3_1$
with winding number 2 in the solid torus $V$, as in the description above.  The Alexander
polynomials of these knots are
$$ \Delta_{k_p} = 1 - t + t^2, \quad \Delta_{k_c} = 1 -3t +t^2,$$
$$ \Delta_{k_s} = (1 - t - t^2)(1 - t +t^2)(1 + t - t^2).$$
}
\end{Example}

\begin{Remark} {\rm In [p.463, Wan], it is asked whether Jones polynomials will provide
an obstruction to $1$-domination.
In Example \ref{cable}, we have
Jones polynomial:
$$V_{k_s} = t^{-5} - t^{-4} +t + t^3 -t^4 - t^7 +t^8,$$
which is irreducible, and certainly does not have $V_{k_p}$ as a
factor, despite the 1-domination $k_s \geq k_p$.  Therefore the
Jones polynomial does not reflect 1-dominance in a manner
analogous to the Alexander or A-polynomials.  The same may be said of the HOMFLYPT polynomial.}
\end{Remark}

As a final topic in this section, we apply Gordon's approach to ribbon concordance [Go] to prove certain 
$1$-domination rigidity results in terms of  Alexander polynomials.

Let $G$ be a group and let $H\subset G$ be a subgroup. Assume that
$p$ is a fixed integer either equal to $0$ or a prime number.
Define $G \natural H$ to be the subgroup of $G$ generated by all
the elements of the form $[x,y]z^p$ for $x \in G, \, y \in H$ and
$z \in H$. The {\it lower p-central series} of $G$ is defined as
follows: $G_0 = G$, $G_{\alpha +1} = G \natural G_{\alpha}$ and
$G_{\beta} = \cap_{\alpha <\beta}G_{\alpha}$ if $\beta$ is a limit
ordinal. We say that $G$ is {\it transfinitely p-nilpotent} if $G_{\alpha}
= \{1\}$ for some ordinal $\alpha$. In particular the group is residually $p$-nilpotent (or residually $p$ for short)
if and only if $G_{\omega} = \{1\}$.

\begin{Definition} {\rm \cite{Go} 
{\rm A knot $k \subset S^3$ is {\it transfinitely $p$-nilpotent} 
if its commutator subgroup $[\pi_1E(k),\pi_1E(k)]$ is transfinitely
$p$-nilpotent.}}
\end{Definition}

\medskip

The class of transfinitely $p$-nilpotent knots contains 
$2$-bridge knots, fibred knots and, when $p > 0$, alternating knots $k$ for which the leading coefficient of $\Delta_{k}$ is
a power of $p$. Moreover  it has been observed by Gordon that the property of being transfinitely p-nilpotent is preserved by connected sum and cabling, see \cite{Go}.

For a polynomial $P$, we use $d^o(P)$ to denote the degree of $P$.
The following proposition is essentially \cite[Lemma 3.4]{Go}

\begin{Proposition}\label{nilpotent} Let $k_1$ and
$k_2$ be two knots in $S^3$ such that $k_1 \geq k_2$. If $k_1$ is
transfinitely $p$-nilpotent for some $p$ and $d^{o}(\Delta_{k_{1}}) = d^{o}(\Delta_{k_{2}})$, then
$k_1 = k_2$.
\end{Proposition}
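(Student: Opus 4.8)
The plan is to convert the $1$-domination into an epimorphism of knot groups, use the degree hypothesis to promote it to an isomorphism of Alexander modules, and then invoke the Stallings machinery underlying \cite[Lemma 3.4]{Go} to upgrade this to an isomorphism of the full groups. First I would fix a proper degree $1$ map $f\colon E(k_1)\to E(k_2)$ realizing $k_1\ge k_2$, normalized so that $f|\partial E(k_1)$ is a homeomorphism carrying $m_1$ to $m_2$. By Proposition \ref{surjective} the map $f_*\colon \pi_1 E(k_1)\to \pi_1 E(k_2)$ is surjective, and since both groups abelianize to $\mathbb{Z}$, it is an isomorphism on $H_1$. Writing $\pi_i=\pi_1 E(k_i)$ and $\pi_i'=[\pi_i,\pi_i]$, the $H_1$-isomorphism forces $\ker f_*\subseteq \pi_1'$, so $f_*$ restricts to a surjection $\phi\colon \pi_1'\to \pi_2'$ with $\ker\phi=\ker f_*$. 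Because $\pi_i'=\pi_1(\tilde E(k_i))$ and its abelianization is the Alexander module $\Lambda_{k_i}$, the homomorphism induced by $\phi$ on $H_1$ is exactly the map $\tilde f_*\colon \Lambda_{k_1}\to\Lambda_{k_2}$ of Proposition \ref{splitting}.

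Next I would show $\phi$ induces an isomorphism on first homology. By Proposition \ref{splitting} there is a $\mathbb{Z}[t^{\pm1}]$-splitting $\Lambda_{k_1}=\Lambda_{k_2}\oplus\Lambda$ in which $\tilde f_*$ is the projection. Tensoring with $\mathbb{Q}$ and using that $\dim_{\mathbb{Q}}\big(\Lambda_{k_i}\otimes\mathbb{Q}\big)=d^{o}(\Delta_{k_i})$, as recorded after Proposition \ref{splitting}, the hypothesis $d^{o}(\Delta_{k_1})=d^{o}(\Delta_{k_2})$ gives $\Lambda\otimes\mathbb{Q}=0$. Since $\Lambda$ is a subgroup of the $\mathbb{Z}$-torsion-free group $\Lambda_{k_1}$, it is $\mathbb{Z}$-torsion-free, whence $\Lambda=0$ and $\tilde f_*$ is an isomorphism. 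Thus $\phi$ is a surjection of commutator subgroups inducing an isomorphism on $H_1$ with both $\mathbb{Z}$ and $\mathbb{Z}/p$ coefficients (the $\mathbb{Z}/p$ statement following since $\tilde f_*$ is an integral isomorphism and the relevant $\mathrm{Tor}$ terms vanish).

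Finally I would run the residual-nilpotence argument, which is the crux and is precisely the content of \cite[Lemma 3.4]{Go}. The groups $\pi_i'$ have $H_2(\pi_i';-)=H_2(\tilde E(k_i);-)=0$, the infinite cyclic cover of a classical knot exterior having vanishing second homology (cf. \cite{Mi}), so $\phi$ induces an isomorphism on $H_1$ and an epimorphism on $H_2$ with $\mathbb{Z}$- or $\mathbb{Z}/p$-coefficients as appropriate. By Stallings' theorem \cite{Sta}, extended transfinitely as in \cite{Go}, $\phi$ then induces an isomorphism $\pi_1'/(\pi_1')_\alpha \xrightarrow{\ \cong\ } \pi_2'/(\pi_2')_\alpha$ on every quotient by the lower $p$-central series. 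Since $k_1$ is transfinitely $p$-nilpotent, $(\pi_1')_\alpha=\{1\}$ for some ordinal $\alpha$, and the injectivity of these induced maps forces $\ker\phi\subseteq\bigcap_\alpha(\pi_1')_\alpha=\{1\}$. Hence $\phi$, and therefore $f_*$, is an isomorphism; as knot exteriors are Haken and $f$ is proper, Waldhausen's theorem (\cite[15.13]{He}) yields $E(k_1)\cong E(k_2)$, and then $k_1=k_2$ by \cite{GL}. I expect the genuine difficulty to sit in the transfinite step: one must control the lower $p$-central series past $\omega$, which needs the $H_2$-epimorphism at successor stages and an inverse-limit compatibility at limit ordinals. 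Since this is abstracted in \cite[Lemma 3.4]{Go}, the substance of my contribution is verifying its hypotheses — surjectivity and the Alexander-module isomorphism — established in the first two paragraphs.
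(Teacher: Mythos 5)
Your proof is correct and shares its skeleton with the paper's: pass from $f_*$ to the epimorphism $\phi$ of commutator subgroups, verify the hypotheses of Stallings' theorem (isomorphism on $H_1(-;\F_p)$ and epimorphism on $H_2(-;\F_p)$, the latter trivial because $H_2$ of the infinite cyclic cover vanishes and its $H_1$ is $\Z$-torsion-free), apply the transfinite form of \cite[Theorem 3.4]{Sta} as in \cite{Go} to obtain isomorphisms on all quotients by the lower $p$-central series, use the transfinite $p$-nilpotence of $k_1$ to force $\ker\phi=\{1\}$, and conclude with Waldhausen \cite{He} and \cite{GL}. The one step where you genuinely diverge is the verification of the $H_1$-isomorphism. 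The paper stays with field coefficients throughout: it asserts that $\text{rank}\,(H_1([\pi,\pi];\F))=d^{o}(\Delta_k)$ for a field $\F$, so that the induced epimorphism of $\F_p$-vector spaces has equal finite dimension on both sides and is therefore an isomorphism. You instead feed in the integral splitting $\Lambda_{k_1}=\Lambda_{k_2}\oplus\Lambda$ of Proposition \ref{splitting}: equality of degrees kills $\Lambda\otimes\Q$, torsion-freeness of $\Lambda_{k_1}$ then kills $\Lambda$ itself, so $\tilde f_*$ is an isomorphism over $\Z$ and hence mod $p$. This variant buys real robustness: the paper's rank formula is actually false over $\F_p$ when $p$ divides the leading coefficient of $\Delta_k$, since then $\dim_{\F_p}H_1([\pi,\pi];\F_p)=\deg(\Delta_k \bmod p)<d^{o}(\Delta_k)$ (for instance it is $0$ for $k=5_2$ and $p=2$), and this is precisely the regime of Corollary \ref{1-domination alternating}, where the leading coefficient is a power of $p$. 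The paper's dimension count can be repaired --- Proposition \ref{splitting} gives $\Delta_{k_2}\mid\Delta_{k_1}$, which together with equal degrees and $\Delta(1)=\pm1$ forces $\Delta_{k_1}=\pm\Delta_{k_2}$ and hence equal mod-$p$ dimensions --- but your integral argument sidesteps the issue entirely and is the cleaner way to verify Stallings' hypotheses.
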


\begin{proof}
The proper degree $1$ map $f: E(k_1) \to E(k_2)$ induces an epimorphism $f_*: \pi_1E(k_1)
\to \pi_1E(k_2)$. It induces an epimorphism $\hat f_* :[\pi_1E(k_1),\pi_1E(k_1)] \to
[\pi_1E(k_2),\pi_1E(k_2)]$. 

For a knot $k \subset S^3$ it is well-known that $H_1([\pi_1E(k),\pi_1E(k)];\Z)$ is torsion-free and $H_2([\pi_1E(k),\pi_1E(k)];\Z) = 0$. Thus 
$H_2([\pi_1E(k),\pi_1E(k)];\F_p) = \{0\}$ where $\F_p = \mathbb Q$ when $p = 0$ and $\Z/p\Z$ otherwise. It is also known that for a field $\mathbb F$, 
$\text{rank}\,(H_1([\pi(k),\pi(k)];\F)) = d^{o}(\Delta_{k})$. Therefore our hypotheses imply that 
the epimorphism $\hat f_*$ induces an isomorphism 
$$\hat f_{\sharp}: H_1([\pi_1E(k_1),\pi_1E(k_1)];\F_p) \to
H_1([\pi_1E(k_2),\pi_1E(k_2)];\F_p).$$
Stallings' theorem \cite[Theorem 3.4]{Sta} implies that for every ordinal $\alpha$, $\hat
f_*$ induces an isomorphism

{\small $$[\pi_1E(k_1),\pi_1E(k_1)]/[\pi_1E(k_1),\pi_1E(k_1)]_{\alpha}
\to [\pi_1E(k_2),\pi_1E(k_2)]/[\pi_1E(k_2),\pi_1E(k_2)]_{\alpha}.$$}

\noindent By hypothesis we have $[\pi_1E(k_1),\pi_1E(k_1)]_{\alpha} =
\{1\}$ for some ordinal $\alpha$, so the epimorphism $\hat f_* :[\pi_1E(k_1),\pi_1E(k_1)] \to
[\pi_1E(k_2),\pi_1E(k_2)]$ is in fact an isomorphism. Since $f$ induces an isomorphism $f_{\sharp} : H_1(\pi_1E(k_1);\Z) \to
H_1(\pi_1E(k_2);\Z)$, it follows that the epimorphism $f_*: \pi_1E(k_1)
\to \pi_1E(k_2)$ is an isomorphism. Finally, since this isomorphism preserves the
peripheral structures of the two knots, the two knots are the same by Waldhausen [H, Chap 13]
and [GL].
\end{proof}

Since alternating knots $k$ for which the leading coefficient of $\Delta_{k}$ is
a power of $p$ are transfinitely $p$-nilpotent, the followings are straightforward consequences of Propositions \ref{splitting} and \ref{nilpotent}:

\begin{Corollary}\label{1-domination alternating}
Suppose that $k_1\ge k_2$ where $k_1$ is an alternating knot such that the leading
coefficient of $\Delta_{k_{1}}$ is a power of a prime number and $d^{o}(\Delta_{k_{1}}) =
d^{o}(\Delta_{k_{2}})$. Then $k_1 = k_2$.
\end{Corollary}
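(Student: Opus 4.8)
The plan is to derive Corollary \ref{1-domination alternating} directly from the two propositions it cites, treating it as an essentially immediate consequence rather than an independent argument. The key observation is that the hypotheses are tailored precisely so that Proposition \ref{nilpotent} applies: I must verify that an alternating knot $k_1$ whose $\Delta_{k_1}$ has leading coefficient a prime power is transfinitely $p$-nilpotent for a suitable $p$, after which the degree hypothesis $d^o(\Delta_{k_1}) = d^o(\Delta_{k_2})$ lets me invoke Proposition \ref{nilpotent} verbatim to conclude $k_1 = k_2$.

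First I would recall, as stated in the discussion preceding Proposition \ref{nilpotent}, that when $p > 0$ an alternating knot $k$ for which the leading coefficient of $\Delta_k$ is a power of $p$ is transfinitely $p$-nilpotent. Thus, given the hypothesis that the leading coefficient of $\Delta_{k_1}$ is a power of a prime number, I would fix the prime $p$ dividing this leading coefficient; then $k_1$ is transfinitely $p$-nilpotent for this specific $p$. This is the only substantive matching step, and it is a verification rather than a construction, since the relevant nilpotency property of alternating knots has already been recorded.

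With $k_1$ transfinitely $p$-nilpotent for this $p$, and with $k_1 \ge k_2$ together with the degree equality $d^o(\Delta_{k_1}) = d^o(\Delta_{k_2})$ assumed, all hypotheses of Proposition \ref{nilpotent} are satisfied. Applying that proposition directly yields $k_1 = k_2$, completing the proof. Proposition \ref{splitting} is cited alongside \ref{nilpotent} in the lead-in because it guarantees the divisibility $\Delta_{k_2} \mid \Delta_{k_1}$, so that equality of degrees forces $\Delta_{k_1} = \pm \Delta_{k_2}$; while this is not strictly needed for the bare logical implication (Proposition \ref{nilpotent} only requires the degree equality), it frames why the degree hypothesis is the natural one and confirms consistency of the Alexander data.

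The only potential obstacle is making sure the prime $p$ is chosen correctly and that the cited nilpotency statement genuinely covers the stated hypothesis. Specifically, I must confirm that ``leading coefficient is a power of a prime number'' in the corollary matches the ``leading coefficient of $\Delta_k$ is a power of $p$'' condition invoked for transfinite $p$-nilpotency, which it does once $p$ is taken to be that prime; I would note that the case $p = 0$ plays no role here since a prime power leading coefficient pins down a positive prime $p$. Since everything else is a direct citation of Proposition \ref{nilpotent}, I expect no genuine difficulty beyond this bookkeeping, and the corollary follows as a short deduction.
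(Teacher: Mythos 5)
Your proposal is correct and follows exactly the paper's own route: the paper states the corollary as a ``straightforward consequence'' of the fact that alternating knots whose Alexander polynomial has leading coefficient a power of a prime $p$ are transfinitely $p$-nilpotent, combined with Proposition \ref{nilpotent}. Your additional observation that Proposition \ref{splitting} is not strictly needed for the bare implication (it only explains why equal degrees force $\Delta_{k_1} = \pm\Delta_{k_2}$) is also accurate.
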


\begin{Corollary}\label{sequence alternating}
Suppose $k_0$ is an alternating knot such that the leading
coefficient of $\Delta_{k_{0}}$ is a power of a prime number
Then 
any $1$-domination sequence $k_0> k_1> .... > k_n >....$, contains at most $d^{o}(\Delta_{k_{0}})$ alternating knots.
\end{Corollary}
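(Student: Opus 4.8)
The plan is to combine the divisibility of Alexander polynomials under $1$-domination (Proposition \ref{splitting}) with the rigidity statement of Corollary \ref{1-domination alternating}, so as to show that along the sequence the degree of the Alexander polynomial must strictly drop each time one passes from an alternating knot to a later one. Since these degrees are non-negative integers no larger than $d^{o}(\Delta_{k_{0}})$, only finitely many alternating knots can occur, and a counting argument then produces the stated bound.

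First I would record what transitivity of $\ge$ buys us. For every index $i$ we have $k_0 \ge k_i$, so Proposition \ref{splitting} gives $\Delta_{k_i}\mid \Delta_{k_0}$ in $\Z[t^{\pm 1}]$. Two consequences follow at once: the degrees $d^{o}(\Delta_{k_i})$ are non-increasing in $i$; and, writing $\Delta_{k_i}\, q_i = \Delta_{k_0}$ with $q_i \in \Z[t^{\pm 1}]$ and comparing leading coefficients, the leading coefficient of $\Delta_{k_i}$ divides that of $\Delta_{k_0}$. As the latter is $\pm p^{m}$ for some prime $p$, the leading coefficient of every $\Delta_{k_i}$ is $\pm p^{j}$ for some $0\le j\le m$, hence again a power of $p$.

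Next I would apply the rigidity result to pairs of alternating knots. Let $k_a$ and $k_b$ be any two alternating knots occurring in the sequence with $a<b$; by transitivity $k_a \ge k_b$, and $k_a\ne k_b$ since the sequence is strict. The dominating knot $k_a$ is alternating and, by the previous step, has leading coefficient a power of $p$, so the hypotheses of Corollary \ref{1-domination alternating} are met. Were $d^{o}(\Delta_{k_a}) = d^{o}(\Delta_{k_b})$, that corollary would force $k_a = k_b$, a contradiction; hence $d^{o}(\Delta_{k_a}) \ne d^{o}(\Delta_{k_b})$, and combined with the non-increase of degrees this gives $d^{o}(\Delta_{k_a}) > d^{o}(\Delta_{k_b})$. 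Thus the alternating knots of the sequence, listed in order, carry strictly decreasing Alexander degrees beginning with $d^{o}(\Delta_{k_0})$.

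Finally comes the count, and this bookkeeping is the only delicate point; the whole geometric content sits in the strict degree drop forced by Corollary \ref{1-domination alternating}. A strictly decreasing sequence of non-negative integers with maximum $d^{o}(\Delta_{k_0})$ has length at most $d^{o}(\Delta_{k_0})+1$, which is one too many. To remove the spurious term I would note that the extremal case would require an alternating knot of Alexander degree $0$ or $1$: degree $0$ occurs only for the unknot, which dominates nothing nontrivial and so can appear at most once and only as the final term, while a nontrivial alternating knot has $d^{o}(\Delta)=2g\ge 2$ by the classical fact that the Alexander degree of an alternating knot is twice its genus, and so never equals $1$. Excluding these degenerate degrees shows the strictly decreasing degrees of the alternating knots lie in $\{2,\dots,d^{o}(\Delta_{k_0})\}$ together with at most the single value $0$, whence there are at most $d^{o}(\Delta_{k_0})$ alternating knots, as claimed.
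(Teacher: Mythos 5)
Your proof is correct and follows essentially the route the paper intends: the paper gives no explicit argument, asserting the corollary as a ``straightforward consequence'' of Propositions \ref{splitting} and \ref{nilpotent}, and your combination of divisibility from Proposition \ref{splitting} (non-increasing degrees and prime-power leading coefficients along the sequence, via comparison of leading coefficients) with the rigidity of Corollary \ref{1-domination alternating} is exactly that intended argument. Your final counting step --- invoking the Crowell--Murasugi fact $d^{o}(\Delta_k)=2g(k)$ for alternating knots to exclude degree $1$ and thereby sharpen the naive bound $d^{o}(\Delta_{k_0})+1$ to $d^{o}(\Delta_{k_0})$ --- correctly supplies a detail the paper's one-line assertion leaves implicit.
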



\begin{Question}\label{Q-nilponent}
Is each alternating knot  transfinitely $p$-nilpotent?
\end{Question}

\begin{Question}\label{alternation}
Suppose that $k$ is alternating. 
Does $k\ge k'$ imply that $k'$ is alternating?

\end{Question}

\begin{Remark} $\;$ \\
{\rm (1) A positive answer to Question \ref{Q-nilponent} implies that if $k_1$ is an alternating knot, $k_1\ge k_2$, and  $d^{o}(\Delta_{k_{1}}) =
d^{o}(\Delta_{k_{2}})$, then $k_1 = k_2$.

\noindent (2) A positive answer to both Question \ref{Q-nilponent} and Question \ref{alternation}
implies that any $1$-domination sequence of knots starting with an alternating knot $k$
has length at most $d^{o}(\Delta_{k})$.}
\end{Remark}

\medskip\noindent
MICHEL BOILEAU, michel.boileau@univ-amu.fr\\
{\it Aix-Marseille Universit\'e, CNRS, Centrale Marseille, I2M, UMR 7373,
13453 Marseille, France}

\medskip\noindent
STEVEN BOYER, boyer.steven@uqam.ca\\
{\it Departement de mathematiques,  UQAM, Montreal H3C 3P8 Canada}

\medskip\noindent
DALE ROLFSEN, rolfsen@math.ubc.ca\\
{\it Department of Mathematics, The University of British
Columbia, V6T 1Z2 Canada }

\medskip\noindent
SHICHENG WANG, wangsc@math.pku.edu.cn\\
{\it LMAM, Department of Mathematics, Peking University, Beijing
100871 China }

\end{document}